\newtheorem{theorem}{Theorem}[section]
\newtheorem{lemma}[theorem]{Lemma}
\newtheorem{corollary}[theorem]{Corollary}
\newtheorem{proposition}[theorem]{Proposition}
\newtheorem{definition}[theorem]{Definition}
\newtheorem{claim}[theorem]{Claim}
\numberwithin{equation}{section}
\def\blfootnote{\xdef\@thefnmark{}\@footnotetext}
\def\m{\mathbb}		\def\mcal{\mathcal}			
\def\lam{\lambda}		\def\Lam{\Lambda}
\def\eps{\epsilon}  	\def\th{\theta}		\def\vp{\varphi}
\def\a{\alpha}	\def\b{\beta}	\def\ls{\lesssim}
\def\p{\partial}  
\def\wh{\widehat}	\def\wt{\widetilde}		
\def\la{\langle}	\def\ra{\rangle}
\def\ls{\lesssim}	\def\gs{\gtrsim}
\def\be{\begin{equation}}     \def\ee{\end{equation}}
\def\bp{\begin{pmatrix}}	\def\ep{\end{pmatrix}} 
		\def\F{\mathscr{F}}
\def\R{\mathbb{R}}
\newcommand{\dd}{\mathrm{d}}
\begin{document}
\title{Global well-posedness of the Majda-Biello system in the resonant case on the real line}
\author[]{Xin Yang}
\date{}
\maketitle

\begin{abstract}
We study the Cauchy problem for the following Majda-Biello system in the case $\alpha=4$,
where the resonance effect is the most significant, on the real line.
\[
\left\{
\begin{array}{rcl}
	u_{t} + u_{xxx} & = & - v v_x,\\
	v_{t} + \alpha v_{xxx} & = & - (uv)_{x},\\
	(u,v)|_{t=0} & = & (u_0,v_0) \in H^{s}(\mathbb{R}) \times H^{s}(\mathbb{R}),
\end{array}
\right. \quad x \in \mathbb{R}, \, t \in \mathbb{R}.
\]
For Sobolev regularity $s\in[\frac34, 1)$, we establish global well-posedness by refining the I-method. Previously, the critical index for local well-posedness was known to be $\frac34$, while global well-posedness was only obtained for $s\geq 1$. 
Our global well-posedness result bridges the gap and matches the threshold in the local theory. The main novelty of our approach is to introduce a pair of distinct $I$-operators, tailored to the resonant structure of the Majda-Biello system with $\alpha=4$. This dual-operator framework allows for pointwise control of the multipliers in the modified energies constructed via the multilinear correction technique. These modified energies are almost conserved and provide effective control over the Sobolev norm of the solution for all time. This new approach has potential applications to other coupled dispersive systems exhibiting strong resonant interactions.

%
\end{abstract}

\blfootnote{\hspace{-0.25in} 2010 Mathematics Subject Classification. 35Q53; 35G55; 35L56; 35D30.}

\blfootnote{\hspace{-0.25in} Key words and phrases. KdV-KdV systems;   Majda-Biello systems; Global well-posedness; I-Method}

\begin{center}
\tableofcontents
\end{center}

\section{Introduction}
\subsection{Literature review and main result}
Consider the initial value problem (IVP) of the real-valued Majda-Biello systems on the real line $\m{R}$:
\be\label{MB}
\left\{
\begin{array}{rcl}
	u_{t} + u_{xxx} & = & - v v_x,\\
	v_{t} + \a v_{xxx} & = & - (uv)_{x},\\
	(u,v)|_{t=0} & = & (u_0,v_0) \in \mathscr{H}^{s}(\m{R}),
\end{array}
\right. 
\ee
where $x$ and $t$ are real numbers, $\alpha \in \mathbb{R} \setminus \{0\}$. The function space $\mathscr{H}^{s}(\mathbb{R})$ is defined as the Cartesian product $H^{s}(\mathbb{R}) \times H^{s}(\mathbb{R})$, where $H^{s}(\mathbb{R})$ denotes the standard Sobolev space. 
The system (\ref{MB}) was  proposed by Majda and Biello in \cite{MB03} as a reduced asymptotic model to study the nonlinear resonant interactions of long wavelength equatorial Rossby waves and barotropic Rossby waves. A fundamental question about (\ref{MB}) is its well-posedness. We aim to determine the minimal regularity of the initial data $(u_0, v_0)$ for which (\ref{MB}) is well-posed. 

The Majda-Biello systems (\ref{MB}) belong to a wider class of coupled KdV-KdV systems (see e.g. \cite{YZ22a} for a more detailed discussion), so we first review the well-posedness theory for the single KdV equation on the real line $\m{R}$ or the periodic torus $\m{T}$:
\begin{equation}\label{KdV}
	\left\{\begin{aligned}
		&u_t + u_{xxx} + uu_x = 0,\\
		&u\mid_{t=0} = u_0. 
	\end{aligned}\right.
\end{equation}
where $u_0\in H^s(\mathbb{R})$ or $H^s(\mathbb{T})$. 
Over more than 60 years of development, the Cauchy problem of the KdV equation (\ref{KdV}) has inspired many great works. Established results show that (\ref{KdV}) is globally well-posed for any $s\geq -1$ \cite{KT06, KV19}, and this index represents the optimal regularity threshold \cite{Mol11, Mol12}. In contrast, if the solution map is required to be uniformly continuous (which is more demanding than the continuity requirement in the standard Hadamard's well-posedness definition), then the critical index is $-3/4$ for the case on $\m{R}$ and $-1/2$ for the case on $\m{T}$ \cite{Bou93a, KPV96, CKSTT03, CCT03, Guo09, Kis09}.

Unlike the single KdV equation, most coupled KdV-KdV systems, such as the Majda-Biello systems (\ref{MB}), are not completely integrable, which makes the study of these systems more challenging. 
For the results of (\ref{MB}) on the torus $\m{T}$, we refer the readers to \cite{Oh09, Oh09-2, YZ22b} for detailed discussions. Next, we will briefly review previous well-posedness results of (\ref{MB}) on the real line $\m{R}$. 

\textbf{Case 1: $\a = 1$}. This situation is similar to the single KdV case and the local well-posedness of (\ref{MB}) in $\mcal{H}^{s}(\m{R})$ for any $s>-\frac{3}{4}$ follows immediately from the single KdV theory. The corresponding global well-posedness result was established by Oh\cite{Oh09-2} via the I-method, where the asymmetry in some multipliers needs to be taken care of to obtain pointwise estimates. The asymmetry is caused by the structure of the Majda-Biello system which was not present in the single KdV case.
	
\textbf{Case 2: $\a \in (0,4) \setminus\{1\}$}. This situation is quite different from the single KdV case due to certain resonance effect. As a result, the threshold for the critical index of the initial data regularity needs to be higher. Oh in \cite{Oh09} proved that (\ref{MB}) is locally well-posed  in $\mcal{H}^{s}(\m{R})$ for $s\geq 0$ and  ill-posed when $s<0$ if the solution map is required to be $C^{2}$. Moreover, due to the conservation of the mass $M$ of the local solution:
	\[
		M(u,v)(t) := \int_{\R} \left[ u^2(x,t) + v^2(x,t) \right] \,dx,
	\]
the global well-posedness of (\ref{MB}) in $\mcal{H}^{s}(\m{R})$ for $s\geq 0$ automatically holds.
	
\textbf{Case 3: $\a = 4$}. In this case, the resonance effect of the Majda-Biello system is the most significant, so the requirement for the regularity of the initial data is the highest. 	Yang-Zhang\cite{YZ22a} showed that (\ref{MB}) is locally well-posed  in $\mcal{H}^{s}(\m{R})$ for $s\geq \frac34$. This index $\frac34$ is also sharp if the solution map is required to be $C^2$, see \cite{YLZ24}. When $s\geq 1$, by taking advantage of the conservation of the above mass $M$ and the following energy $E$ of the local solution:
	\begin{align*}
		E(u,v)(t) := \int_{\R} \left[ u_x^2(x,t) + \a v_x^2(x,t) - u(x,t) v^2(x,t) \right] \, dx,
	\end{align*}
the global well-posedness of (\ref{MB}) can also be justified. Thus, the remaining challenge is the global well-posedness when $s\in [\frac34, 1)$.	
	
\textbf{Case 4: $\a < 0$ or $\a > 4$}. In this case, the resonance effect is not present and the treatment of the well-posedness theory is similar to that when $\a = 1$. It turns out that the critical index for the well-posedness is also $-\frac34$ when the solution map is required to be $C^3$, see e.g. \cite{Oh09, YZ22a, YLZ24}. 

Based on the above literature review, the remaining global well-posedness issue of (\ref{MB}) is when $\a=4$ and $s\in[\frac34, 1)$, so our goal in this paper is to fill this gap. The following is the main result of this paper. 

\begin{theorem}\label{Thm, main}
For the Majda-Biello system (\ref{MB}) with $\a=4$, it is globally well-posed in the space $\mathscr{H}^{s}(\m{R})$ for any $s\in [\frac{3}{4}, 1)$.
\end{theorem}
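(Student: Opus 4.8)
The plan is to deploy the I-method of Colliander--Keel--Staffilani--Takaoka--Tao, combined with the scaling symmetry of \eqref{MB} and the multilinear correction (``almost conservation'') technique, but with the single smoothing operator replaced by a \emph{pair} of distinct Fourier multipliers adapted to the resonant structure at $\a=4$. Concretely, I would fix a large parameter $N$ and introduce operators $I_1,I_2$ with even, smooth, non-increasing symbols $m_1(\xi),m_2(\xi)$ that equal $1$ for $|\xi|\le N$ and decay like $(|\xi|/N)^{s-1}$ for $|\xi|\ge 2N$, so that $I_j:H^s\to H^1$ with operator norm $\ls N^{1-s}$. The crucial feature is the relation $m_2(\xi)=m_1(2\xi)$, with $I_1$ applied to $u$ and $I_2$ applied to $v$. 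Since the local theory for \eqref{MB} at $s\ge\frac34$ is already available (Yang--Zhang), the first task is to upgrade it to an $I$-modified local well-posedness statement at the $H^1$ level: the relevant $X^{s,b}$ multilinear estimates are re-proved with $I_1,I_2$ inserted, yielding a local existence time $\delta\gtrsim 1$ whenever $\|(I_1u_0,I_2v_0)\|_{\mathscr{H}^1}\ls 1$.

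Next I would form the modified energy $E(I_1u,I_2v)$ and differentiate in time. After substituting the equations, $\frac{d}{dt}E(I_1u,I_2v)$ reduces to a trilinear form $\Lambda_3(M_3)$ whose multiplier $M_3$ measures the failure of $I_1,I_2$ to commute with the nonlinearities, and which vanishes when all input frequencies lie in $\{|\xi|\le N\}$. The two governing resonance functions are
\be
\Omega_1=-3(\xi_1-\xi_2)^2(\xi_1+\xi_2), \qquad \Omega_2=3\xi_1(\xi_1+2\xi_2)^2,
\ee
arising from the interactions $vv_x\to u$ and $(uv)_x\to v$ respectively. The degeneracy at $\a=4$ is that $\Omega_2$ vanishes on $\{\xi_1=-2\xi_2\}$, i.e. when the $u$-frequency is twice a $v$-frequency; there a single-operator commutator multiplier does \emph{not} vanish, so the first correction $\Lambda_3(M_3/\Omega)$ would be singular. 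This is exactly where $m_2(\xi)=m_1(2\xi)$ is used: on $\xi_1=-2\xi_2$ (so that $\xi_3=-\xi_2$) all three effective symbols coincide, $m_1(\xi_1)=m_1(2\xi_2)=m_2(\xi_2)=m_2(\xi_3)$, forcing $M_3$ to vanish together with $\Omega_2$ and rendering $\sigma_3=M_3/\Omega$ pointwise bounded. I would then set $\wt E=E(I_1u,I_2v)+\Lambda_3(\sigma_3)$, so that $\frac{d}{dt}\wt E=\Lambda_4(M_4)$ is purely quadrilinear, performing a second correction $\Lambda_4(\sigma_4)$ if an extra power of $N$ is needed.

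With the corrected energy in hand, the heart of the argument is the almost-conservation estimate
\be
\Big|\wt E(t)-\wt E(0)\Big|\ls N^{-\b}\,\big\|(I_1u,I_2v)\big\|_{\mathscr{H}^1}^{4}, \qquad t\in[0,\delta],
\ee
for some $\b>0$, proved by bounding $\Lambda_4(M_4)$ through $L^4$/Strichartz and $X^{s,b}$ estimates against the pointwise bounds on $M_4$. I would also verify the coercivity $\wt E(u,v)\approx\|(I_1u,I_2v)\|_{\mathscr{H}^1}^2$ up to the conserved mass $M$, controlling the non-definite cubic term via Gagliardo--Nirenberg and Young so that $\wt E$ genuinely dominates the modified Sobolev norm. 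Finally, using the scaling $u_\lambda(x,t)=\lambda^2u(\lambda x,\lambda^3 t)$, $v_\lambda(x,t)=\lambda^2v(\lambda x,\lambda^3 t)$ to normalize $E(I_1u_0^\lambda,I_2v_0^\lambda)$ to size $O(1)$, I would iterate the local theory on successive intervals of length $\delta\sim1$; the per-step increment $N^{-\b}$ accumulates, and balancing it against the scaling exponent pins the admissible regularity at exactly $s=\frac34$, yielding a polynomial-in-time bound on $\|(u,v)(t)\|_{\mathscr{H}^s}$ and hence global well-posedness.

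The main obstacle is the construction and bookkeeping of the dual operators so that \emph{both} corrections remain nonsingular. Verifying that $M_3$ vanishes to the correct order on each resonant hypersurface, while simultaneously retaining enough smoothness and mean-value control on $m_1,m_2$ to run the off-resonant estimates, is delicate because the three interacting frequencies now carry two different symbols; one must check that $m_2(\xi)=m_1(2\xi)$ is compatible with the coercivity of the quadratic energy $\int\big((I_1u)_x^2+4(I_2v)_x^2\big)\,dx$ and does not spoil the $L^4$-type multilinear estimates underlying the almost-conservation law. Controlling the quadrilinear remainder $\Lambda_4(M_4)$ uniformly near the resonances, and extracting a power $\b$ large enough to reach $s=\frac34$ after scaling, is where the bulk of the technical work lies.
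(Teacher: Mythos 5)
Your proposal follows essentially the same route as the paper: the same pair of operators coupled by $m_2(\xi)=m_1(2\xi)$, the same choice of $\sigma_3$ as (numerator)/(resonance) with the numerator forced to vanish on the doubly-degenerate resonance $\{\xi_2=\xi_3=-\tfrac12\xi_1\}$ (where, by symmetry in $\xi_2\leftrightarrow\xi_3$, the vanishing is automatically of second order, matching the square in $\alpha_3=-3\xi_1(\xi_2-\xi_3)^2$), the same almost-conservation estimate for the corrected energy, the same coercivity via Gagliardo--Nirenberg, and the same scaling-plus-iteration scheme. The only cosmetic differences are bookkeeping (you track two bilinear resonance functions $\Omega_1,\Omega_2$ where the paper uses the single symmetric trilinear $\alpha_3$, which are equivalent on $\Gamma_3$) and your hedge about a possible further correction $\Lambda_4(\sigma_4)$, which the paper's estimates show is not needed.
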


Besides Majda-Biello systems, two other widely studied coupled KdV-KdV systems are the Hirota-Satsuma systems (\ref{H-S system}):
\be\label{H-S system}
\left\{\begin{array}{rll}
	u_{t}+ a_{1}u_{xxx} &=& -6a_{1}uu_{x}+c_{12}vv_{x},   \vspace{0.03in}\\
	v_{t}+v_{xxx} &=& -3uv_{x}, \vspace{0.03in}\\
	\left. (u,v)\right |_{t=0} &=& (u_{0},v_{0}),
\end{array}\right.\ee
where $a_{1}, c_{12}\in\m{R}\setminus\{0\}$, and the Gear-Grimshaw systems (\ref{G-G system}):
\be\label{G-G system}
\left\{\begin{array}{rcl}
	u_{t}+u_{xxx}+\sigma_{3}v_{xxx} &=&-uu_{x}+\sigma_{1}vv_{x}+\sigma_{2}(uv)_{x},\vspace{0.03in}\\
	\rho_{1}v_{t}+\rho_{2}\sigma_{3}u_{xxx}+v_{xxx}+\sigma_{4}v_{x} &=& \rho_{2}\sigma_{2}uu_{x}-vv_{x}+\rho_{2}\sigma_{1}(uv)_{x}, \vspace{0.03in}\\
	\left. (u,v)\right |_{t=0} &=& (u_{0},v_{0}),
\end{array}\right.\ee
where $\sigma_{i}\in\m{R} \, (1\leq i\leq 4)$ and $\rho_{1},\,\rho_{2}>0$. The Hirota-Satsuma systems were proposed in \cite{HS81} to describe the interaction of two long waves with different dispersion relations. Their well-posedness problem was studied in e.g. \cite{HS81, AC08, Fen94, YZ22a, YZ22b}. The Gear-Grimshaw systems were derived in \cite{GG84} (also see \cite{BPST92} for the explanation about the physical context) as a model to describe the strong interaction of two-dimensional internal gravity waves propagating in a stratified fluid. Their well-posedness theory can be found in e.g.  \cite{GG84, AC08, ACW96, BPST92, LP04, ST00, YZ22a, YZ22b}. Nonetheless, due to the complexity of the structure and interaction terms, the global well-posedness theory of (\ref{H-S system}) and (\ref{G-G system}) are far from complete. For example, as shown in Theorem 1.5 and Theorem 1.6 in \cite{YZ22a}, the following two cases are similar to that for the Majda-Biello system (\ref{MB}) with $\a=4$ in that the resonance effect is the most significant and the critical index is $\frac34$.
\begin{itemize}
\item Case 1: The Hirota-Satsuma system (\ref{H-S system}) with $a_1 = \frac14$.

\item Case 2: The Gear-Grimshaw system (\ref{G-G system}) with 
\[\rho_2\sigma_3^2\leq \frac{9}{25}, \quad  \rho_{1}^2+\frac{25\rho_2\sigma_3^2-17}{4}\rho_1+1=0.\]
\end{itemize}
We believe the method developed in this paper can handle the above two cases as well. 

\subsection{Outline of the approach}

Next, we would like to illustrate this paper's method. When $\a=4$, the Majda-Biello system is specified to be (\ref{MB-4}). There are two well-known conserved quantities $M(u,v)$ and $E(u,v)$, see (\ref{eq_M}) and (\ref{eq_E}), which are at the $L^2$ level and $H^1$ level respectively. When $s\in[\frac34, 1)$, we first apply the idea of the I-method \cite{CKSTT03}, to modify $u,v$ at the high-frequency region to improve their regularity to $H^{1}$ level. 
According to (\ref{MB-4}), the dispersion coefficient for $u$ is $1$, and for $v$ is $\a=4$ which is different from $1$, so our new input is to introduce two different $I$-operators $I_1$ and $I_2$, which are closely related via $\a$, such that both $I_1 u$ and $I_2 v$ belong to $H^{1}(\m{R})$. More precisely, by denoting $m_1$ and $m_2$ to be the Fourier multipliers of $I_1$ and $I_2$ respectively, where $m_1$ is the standard $I$-operator, see (\ref{m1}), while $m_2$ is chosen as a scaled version of $m_1$:
\[m_2(\xi) := m_1(\rho_* \xi).\]
The optimal choice of the constant $\rho_*$ turns out to be $2$.

Now we will explain why $\rho_*$ is chosen as $2$. Firstly, since $I_1 u$ and $I_2 v$ belong to $H^1(\m{R})$, it is a standard strategy to consider the first modified energy $E^{(1)}$ defined by
$E^{(1)}(u,v) := E(I_1 u, I_2 v)$.
Unfortunately, $E^{(1)}(u,v)$ is not conserved anymore and its growing rate is not controllable either, so we apply the multilinear correction technique in \cite{CKSTT03} (also see \cite{Oh09-2}) to introduce the second modified energy $E^{(2)}(u,v)$ which contains a free multiplier $\sigma_3$. The hope is to find some suitable $\sigma_3$ such that the growing rate of $E^{(2)}(u,v)$ can be controlled. By direct computation, the time derivative of $E^{(2)}(u,v)$ is determined by three multipliers (see (\ref{me_2nd_td})): $M_{31}$ which is a tri-linear multiplier, and $M_{41}$ and $M_{42}$ which are two quadri-linear multipliers. Since the operators with quadri-linear multipliers usually decay faster, the hope is that $M_{31}$ vanishes. As a result, $\sigma_3$ needs to be chosen as 
\be\label{sigma_3-choice}
	\sigma_3 = \sigma_3(\xi_1, \xi_2,\xi_3) = \frac{\xi_1^3 m_{1}^{2}(\xi_1) + 4\xi_2^3 m_{2}^{2}(\xi_2) + 4\xi_3^3 m_{2}^{2}(\xi_3)}{\xi_1^3 + 4\xi_2^3 + 4\xi_3^3}, \quad\forall\, \sum_{k=1}^{3} \xi_k = 0.
\ee
Meanwhile, since the main parts of both $M_{41}$ and $M_{42}$ are also in terms of $\sigma_3$, see (\ref{mpl_4}), it is crucial to bound $\sigma_3$ pointwisely in order to estimate $M_{41}$ and $M_{42}$. Noticing that the denominator of $\sigma_3$ can be factored to be:
\[ \xi_1^3 + 4\xi_2^3 + 4\xi_3^3 = 3 \xi_1 (2\xi_2 + \xi_1) (2\xi_3 + \xi_1), \quad\forall\, \sum_{k=1}^{3} \xi_k = 0,\]
Therefore, for the sake of bounding $\sigma_3$, its numerator needs to vanish when $\xi_2 = -\frac12 \xi_1$ or $\xi_3 = -\frac12 \xi_1$. By setting $\xi_2 = -\frac12 \xi_1$ or $\xi_3 = -\frac12 \xi_1$, and solving 
\[
	\xi_1^3 m_{1}^{2}(\xi_1) + 4\xi_2^3 m_{2}^{2}(\xi_2) + 4\xi_3^3 m_{2}^{2}(\xi_3) = 0, 
\]
we find $\rho_*$ has to be $2$, see Section \ref{Subsec, choice_I2} for more general analysis. With this choice of $\rho_{*}$, we manage to obtain effective pointwise bounds on $\sigma_3$, $M_{41}$ and $M_{42}$. Thanks to these pointwise bounds, the rest argument for justifying global well-posedness is standard as that in \cite{CKSTT03}.

As a summary, the main novelty of this paper lies in its refined I-method framework, particularly the use of two coupled $I$-operators, to achieve the global well-posedness at the critical regularity $s=\frac34$ for a previously unresolved case $(\a=4)$ in the Majda-Biello systems (\ref{MB}). 

\subsection{Organization of the paper}
The organization of this paper is as follows: In Section \ref{Sec, me}, we introduce two coupled $I$-operators $I_1$ and $I_2$, and their related modified energies $E^{(1)}(u,v)$ and $E^{(2)}(u,v)$ whose time derivatives are computed in the frequency space. Based on these time derivatives, we determine the optimal choice of the coupled $I$-operators in Section \ref{Sec, est_mpl}, and derive pointwise bounds for all multilinear multipliers in the formula of $\p_t E^{(2)}(u,v)$. Thanks to these pointwise bounds and standard properties of local solutions, we are able to control the growth rate of $E^{(2)}(u,v)$ in Section \ref{Sec, est_me}. Meanwhile, we also bound the difference between $E^{(1)}$ and $E^{(2)}$ for any fixed time point. Finally, in Section \ref{Sec, gwp}, by applying a scaling argument and an iteration scheme, we manage to control the growth of $E^{(1)}(u,v)$ which dominates the $\mathcal{H}^{s}$ norm of $(u,v)$. Thus, the global well-posedness result in Theorem \ref{Thm, main} is established.

\section{Modified energies}
\label{Sec, me}
\subsection{The first modified energy}
We study the IVP of the Majda-Biello system (\ref{MB}) with $\a=4$ and $\frac34\leq s < 1$:
\be\label{MB-4}
\left\{
\begin{array}{rcl}
	u_{t} + u_{xxx} & = & - v v_x,\\
	v_{t} + 4 v_{xxx} & = & - (uv)_{x},\\
	(u,v)|_{t=0} & = & (u_0,v_0) \in \mathscr{H}^{s}(\m{R}).
\end{array}
\right. 
\ee
It has been known that (\ref{MB-4}) is locally well-posed in $\mathscr{H}^{s}(\m{R})$ for any $s\geq \frac34$, now the question is whether the local well-posedness can be strengthened to be the global well-posedness? Usually, one needs to rely on conservation laws to achieve this goal. If the local solution $(u,v)$ has $C\big([0,T]; \mathscr{H}^1(\m{R}) \big)$ regularity for some $T>0$, then the following two quantities are conserved over the time range $[0,T]$.
\begin{align}
	M(u,v)(t) &:= \int_{\R} \left[ u^2(x,t) + v^2(x,t) \right] \,dx, \label{eq_M} \\
	E(u,v)(t) &:= \int_{\R} \left[ u_x^2(x,t) + 4 v_x^2(x,t) - u(x,t) v^2(x,t) \right] \, dx. \label{eq_E}
\end{align}
The mass $M$ is at the $L^2$ level and the energy $E$ is at the $H^1$ level. Although the last term $uv^2$ in the integrand in (\ref{eq_E}) has indefinite sign, it is easy to figure out that the conservation of both $E$ and $M$ guarantees the boundedness of the $\mathcal{H}^{1}$ norm of $(u,v)$, see e.g. Lemma \ref{Lemma, E and H1} in Section \ref{Sec, energy_relation}.
However, when $s\in[\frac34, 1)$, the regularity of $(u,v)$ is not enough to utilize energy $E$ and there is no conservation law available in $H^{s}$ level, so we need to come up with other ways to control the $\mathscr{H}^{s}(\m{R})$ norm of the local solution $(u,v)$.

In order to do so, we adopt the I-method which was first systematically introduced in \cite{CKSTT03}. The key idea is to modify the local solution $(u,v)$ so that their regularity is improved to be at $\mathscr{H}^{1}$ level to apply the energy $E$. Since there are two unknown functions $u$ and $v$ which correspond to different dispersion coefficients, we may need to modify them differently. We first fix a smooth decreasing even function $\varphi\in C^{\infty}(\m{R})$ such that $0\leq \varphi \leq 1$ and 
\be\label{varphi}
\varphi(\xi) = 
\left\{\begin{array}{ccl}
	1 & \text{if} &  |\xi| \leq 1, \\
	|\xi|^{s-1}  & \text{if} & |\xi| \geq 2.
\end{array}\right.
\ee
Then for $j=1,2$, we define the multiplier operator $I_{j}: H^{s}(\m{R}) \to H^{1}(\m{R})$ such that 
\be\label{fm-Iop}
	\wh{I_j w} (\xi) = m_j(\xi) \wh{w}(\xi), \quad \xi \in \R,
\ee
where $\wh{I_j w}$ and $\wh{w}$ represent the Fourier transforms of $I_j w$ and $w$ on $\m{R}$, and $m_1$ and $m_2$ are defined below:
\[m_1(\xi) = \varphi(N^{-1}\xi), \qquad m_2(\xi) = \varphi(\rho_{*} N^{-1}\xi ), \qquad \forall\,\xi\in\m{R}.\]
Here, $\rho_{*}$ denotes a positive constant which will be determined later as the constant $2$ (see (\ref{m2_fix}) in Section \ref{Subsec, choice_I2}), and $N$ refers to any large positive integer which is at least 10. Thanks to the properties of the function $\varphi$, we know $m_j \in C^{\infty}(\m{R})$ is an even function such that $0\leq m_j \leq 1$ for $j=1,2$. In addition, 
\be\label{m1}
m_1(\xi) = 
\left\{\begin{array}{ccl}
	1 & \text{if} &  |\xi| \leq N, \\
	N^{1-s} |\xi|^{s-1}  & \text{if} & |\xi| \geq 2N,
\end{array}\right.
\ee
and
\be\label{m2}
m_2(\xi) = m_1(\rho_{*} \xi) = 
\left\{\begin{array}{ccl}
	1 & \text{if} &  |\xi| \leq N/\rho_{*}, \\
	N^{1-s} \rho_{*}^{s-1} |\xi|^{s-1} & \text{if} & |\xi| \geq 2N/\rho_{*}.
\end{array}\right. 
\ee
Although $I_1$ and $m_1$ depend on $N$, and $I_2$ and $m_2$ depend on both $N$ and $\rho_{*}$, we simply write them as $I_1$, $I_2$, $m_1$ and $m_2$ for convenience of notation. 

The purpose of introducing $I_1$ and $I_2$ is to improve the regularity of $H^s$ functions to be $H^{1}$ so that the energy $E$ of $(I_1 u, I_2 v)$ is validated. Consequently, the first modified energy $E^{(1)}$ is then defined as follows.
\be\label{mE_1st}
	E^{(1)}(u,v) := E(I_1 u, I_2 v), \quad u, v \in H^s(\m{R}).
\ee
Since the local solution $(u,v)$ is real-valued and $\{m_j\}_{j=1,2}$ are real-valued even functions, then we know both $\{I_j u\}_{j=1,2}$ and $\{I_j v\}_{j=1,2}$ are also real-valued. 

Due to the presence of the operators $I_1$ and $I_2$ in (\ref{mE_1st}), the first modified energy $E^{(1)}(u,v)$ may not conserve anymore. The hope is that its growth can be suitably controlled. In order to exploit the symmetry more effectively, we will rewrite $E^{(1)}$ as an integration over a hyperplane in the frequency space.

First, we introduce some notations. For a multiplier $M_n$ on $\m{R}^{n}$ and for functions $f_1, \dots, f_n$ on $\m{R}$, we define
\be\label{eq_Lambda}
	\Lambda_n(M_n; f_1, \dots, f_n) = \int_{\Gamma_n} M_n(\xi_1, \dots, \xi_n) \prod_{j=1}^n \wh{f_j}(\xi_j) \, d\sigma, 
\ee
where $\Gamma_n$ denotes the hyperplane: $\Gamma_n = \{ (\xi_1, \dots, \xi_n) \in \R^n : \sum_{j=1}^n \xi_j = 0 \}$, $\wh{f_j}$ refers to the Fourier transform of $f_j$, and $d\sigma$ means the surface integral. If the functions $\{f_{j}\}_{j=1}^{n}$ are identical, then we adopt the simplified notation: 
\[
	\Lambda_n(M_n; f) := \Lambda_n(M_n; f, \dots, f).
\] 
Based on the above notations, for real-valued functions $u$ and $v$, one can apply the Plancherel identity to  find that 
\begin{align*}
	\int_{\m{R}} u_x^2 \, dx = - \Lambda_2(\xi_1 \xi_2; u), \quad \int_{\m{R}} v_x^2 \, dx = - \Lambda_2(\xi_1 \xi_2; v), \quad 
	\int_{\m{R}} u v^2 \,dx = \Lambda_3(1; u, v, v).
\end{align*}
Thus, the standard energy $E$ in (\ref{eq_E}) can be written as:
\begin{equation}
	E(u,v) = - \Lambda_2(\xi_1 \xi_2; u) - 4 \Lambda_2(\xi_1 \xi_2; v) - \Lambda_3(1; u, v, v). \label{eq_E_freq}
\end{equation}
Likewise, the first modified energy $E^{(1)}$ in (\ref{mE_1st}) expands to:
\be\label{mE_1st_freq}
\begin{split}
	E^{(1)}(u,v) &= - \Lambda_2(\xi_1 \xi_2 m_1(\xi_1) m_1(\xi_2); u) - 4 \Lambda_2(\xi_1 \xi_2 m_2(\xi_1) m_2(\xi_2); v) \\
	& \quad - \Lambda_3(m_1(\xi_1) m_2(\xi_2) m_2(\xi_3); u, v, v).
\end{split}
\ee

\subsection{Time derivative of energies}
In order to control the growth rate of the first modified energy, we need to study how fast it grows by computing its derivative with respect to time.

Firstly, the system (\ref{MB-4}) can be rewritten in the frequency variable:
\be\label{mb-freq}
\left\{\begin{aligned}
	\wh{u}_t(\xi) &= i \xi^3 \wh{u}(\xi) - \frac{1}{2} i \xi \int_{ \eta_1 + \eta_2 = \xi} \wh{v}(\eta_1) \wh{v}(\eta_2) \,d\sigma, \\
	\wh{v}_t(\xi) &= 4 i \xi^3 \wh{v}(\xi) - i \xi \int_{\eta_1 + \eta_2 = \xi} \wh{u}(\eta_1) \wh{v}(\eta_2) \,d\sigma.
\end{aligned}\right.
\ee
For any multiplier $M_2$ on $\m{R}^2$, it follows from (\ref{eq_Lambda}) that 
\[
\p_{t} \Lam_2(M_2; u) = \int_{\Gamma_{2}} M_{2}(\xi_1, \xi_2) \big[ \wh{u}_t(\xi_1) \wh{u}(\xi_2) +  \wh{u}(\xi_1) \wh{u}_t(\xi_2)  \big] \, d\sigma.	
\]
Plugging (\ref{mb-freq}) into the above identity and using symmetry yields 
\[ 
	\p_{t} \Lam_2(M_2; u) = \frac12 i \Lam_3\big(\xi_1\big[ M_2(\xi_1, \xi_2+\xi_3) + M_2(\xi_2+\xi_3, \xi_1) \big]; u, v, v \big).
\]
Since $\sum\limits_{j=1}^{3} \xi_j = 0$, by replacing $\xi_1$ by $-(\xi_2+\xi_3)$ and using symmetry, we obtain 
\[ 
	\p_{t} \Lam_2(M_2; u) = - i \Lam_3\big(\xi_2\big[ M_2(\xi_1, \xi_2+\xi_3) + M_2(\xi_2+\xi_3, \xi_1) \big]; u, v, v \big).
\]
In particular, 
\begin{align*}
	\p_{t} \Lam_2\big(\xi_1\xi_2 m_{1}(\xi_1) m_{1}(\xi_2) ; u \big) &= -i\Lam_3\big( 2\xi_2 \big[ \xi_1(\xi_2+\xi_3)m_1(\xi_1)m_1(\xi_2+\xi_3) \big]; u, v, v \big).
\end{align*}
Since $\xi_2+\xi_3 = -\xi_1$ and $m_1$ is an even function, then the above equation yields
\begin{align*}
	\p_{t} \Lam_2\big(\xi_1\xi_2 m_{1}(\xi_1) m_{1}(\xi_2) ; u \big) &= i\Lam_3\big( 2\xi_2 \xi_1^2 m_1^2(\xi_1); u, v, v \big).
\end{align*}
Thanks to the symmetry, we can replace $2\xi_2$ by $\xi_2 + \xi_3$ (equivalently $-\xi_1$) in the above equation to have
\begin{align*}
	\p_{t} \Lam_2\big(\xi_1\xi_2 m_{1}(\xi_1) m_{1}(\xi_2) ; u \big) &= -i\Lam_3\big( \xi_1^3 m_1^2(\xi_1); u, v, v \big).
\end{align*}
Similarly, we deduce from the second equation in (\ref{mb-freq}) to find
\[
	\p_{t} \Lam_2(M_2; v) = - i \Lam_3\big( (\xi_1+\xi_2) \big[ M_2(\xi_1+\xi_2, \xi_3) + M_2(\xi_3, \xi_1+\xi_2) \big]; u, v, v \big). 
\]
As a consequence, 
\[
	\p_{t} \Lam_2 \big(\xi_1\xi_2 m_{2}(\xi_1) m_{2}(\xi_2); v \big) = -i \Lam_{3} \big( 2(\xi_1+\xi_2) (\xi_1+\xi_2)\xi_3 m_2(\xi_1+\xi_2) m_2(\xi_3); u, v, v\big).
\]
Again, due to the facts that $\xi_1+\xi_2 = -\xi_3 $ and $m_2$ is an even function, the above equation leads to 
\[
	\p_{t} \Lam_2 \big(\xi_1\xi_2 m_{2}(\xi_1) m_{2}(\xi_2); v \big) = -i \Lam_{3} \big( 2\xi_3^3 m_2^2(\xi_3); u, v, v\big).
\]
Since $\xi_2$ and $\xi_3$ are symmetric on the right hand side of the above equation, we conclude that 
\[
	\p_{t} \Lam_2 \big(\xi_1\xi_2 m_{2}(\xi_1) m_{2}(\xi_2); v \big) = -i \Lam_{3} \big( \xi_2^3 m_2^2(\xi_2) + \xi_3^3 m_2^2(\xi_3); u, v, v\big).
\]
In summary, we have obtained the time derivatives of $\Lam_2$ operators as shown below:
\be\label{Lam_2_td}\left\{\begin{array}{rll}
	\p_{t} \Lam_2\big(\xi_1\xi_2 m_{1}(\xi_1) m_{1}(\xi_2) ; u \big) &=& - i \Lam_3\big(\xi_1^3 m_{1}^2(\xi_1); u, v, v \big), \vspace{0.1in} \\
	\p_{t} \Lam_2 \big(\xi_1\xi_2 m_{2}(\xi_1) m_{2}(\xi_2); v \big) &=& - i \Lam_3\big( \xi_2^3 m_{2}^2(\xi_2) + \xi_3^3 m_{2}^2(\xi_3); u, v, v \big). 
\end{array}\right.\ee

Analogously, for any multiplier $M_3$ on $\m{R}^3$, we can justify equation (\ref{gLam_3_td}) below (verification details can be found in Appendix \ref{Sec, td_gLam3}).
\be\label{gLam_3_td}\begin{split}
	\p_{t} \Lam_3(M_3; u, v, v) = &\ i \Lam_3(\a_3 M_3; u,v,v) - \frac{1}{2} i \Lam_{4}\big( [\xi_1+\xi_4] M_3(\xi_1+\xi_4, \xi_2, \xi_3); v \big) \\
	& - i \Lam_{4}\big( [\xi_2+\xi_3] M_3(\xi_1, \xi_2+\xi_3, \xi_4); u,u,v,v \big) \\
	& - i \Lam_{4}\big( [\xi_2+\xi_3] M_3(\xi_1, \xi_4, \xi_2+\xi_3); u,u,v,v \big),
\end{split}\ee
where $\a_3$ is defined as 
\be\label{alpha_3}
	\a_3(\xi_1,\xi_2,\xi_3) := \xi_1^3 + 4\xi_2^3 + 4\xi_3^3, \quad \forall\, (\xi_1,\xi_2,\xi_3)\in \Gamma_3,
\ee
which can be rewritten in a compact form: $\a_3(\xi_1,\xi_2,\xi_3) = -3\xi_1 (\xi_2 - \xi_3)^2$ since $\sum\limits_{i=1}^{3} \xi_i = 0$ for any $(\xi_1,\xi_2,\xi_3)\in \Gamma_3$. 
In particular, when 
\[M_{3}(\xi_1, \xi_2, \xi_3) = m_{1}(\xi_1) m_{2}(\xi_2) m_{2}(\xi_3), \quad\forall\, (\xi_1, \xi_2, \xi_3)\in\Gamma_{3},\]
we obtain 
\be\label{Lam_3_td}\begin{split}
	&\ \p_t \Lambda_3 \big(m_{1}(\xi_1) m_{2}(\xi_2) m_{2}(\xi_3); u, v, v\big) \\
	= &\ i \Lambda_3 ( \alpha_3 m_{1}(\xi_1) m_{2}(\xi_2) m_{2}(\xi_3); u, v, v ) \\
	&- \frac{1}{2} i \Lambda_4 \big( [\xi_1 + \xi_4] m_1(\xi_1 + \xi_4) m_2(\xi_2) m_2(\xi_3); v \big) \\
	&- 2i \Lambda_4 \big( [\xi_2 + \xi_3] m_1(\xi_1) m_2(\xi_2 + \xi_3) m_2(\xi_4); u, u, v, v \big).
\end{split}\ee
Based on (\ref{Lam_2_td}) and (\ref{Lam_3_td}), it follows from (\ref{mE_1st_freq}) that 
\be\label{me_1st_td}\begin{split}
	\p_t E^{(1)}(u,v) = &\ i \Lam_{3}\big( \xi_1^3 m_{1}^{2}(\xi_1) + 4\xi_2^3 m_{2}^{2}(\xi_2) + 4\xi_3^3 m_{2}^{2}(\xi_3); u, v, v \big) \\	
	& - i \Lambda_3 ( \alpha_3 m_{1}(\xi_1) m_{2}(\xi_2) m_{2}(\xi_3); u, v, v ) \\
	& + \frac{1}{2} i \Lambda_4 \big( [\xi_1 + \xi_4] m_1(\xi_1 + \xi_4) m_2(\xi_2) m_2(\xi_3); v \big) \\
	& + 2i \Lambda_4 \big( [\xi_2 + \xi_3] m_1(\xi_1) m_2(\xi_2 + \xi_3) m_2(\xi_4); u, u, v, v \big),
\end{split}\ee
where $\a_3$ is defined as in (\ref{alpha_3}).

\subsection{The second modified energy}

By observing the first two rows in (\ref{me_1st_td}), the combined multiplier
\be\label{main_E1_dt}
	\xi_1^3 m_{1}^{2}(\xi_1) + 4\xi_2^3 m_{2}^{2}(\xi_2) + 4\xi_3^3 m_{2}^{2}(\xi_3) - \alpha_3 m_{1}(\xi_1) m_{2}(\xi_2) m_{2}(\xi_3)
\ee
in the operator $\Lam_3$ is unbounded since the cubic term $m_{1}(\xi_1) m_{2}(\xi_2) m_{2}(\xi_3)$ decays faster than quadratic terms $m_{1}^{2}(\xi_1)$, $m_{2}^{2}(\xi_2)$ and $m_{2}^{2}(\xi_3)$ when $|\xi_1|\sim |\xi_2| \sim |\xi_3| \gg 1$. As a result, the growth of $E^{(1)}(u,v)$ is difficult to control by using (\ref{me_1st_td}). We will apply the multilinear correction technique to modify $E^{(1)}(u,v)$ further. This technique was first proposed in \cite{CKSTT03} to study the single KdV equation and later applied to Majda-Biello systems in \cite{Oh09-2}. More precisely, we give more freedom to the $\Lam_3$ term in (\ref{mE_1st_freq}) to define the second modified energy $E^{2}(u,v)$ as follows:
\be\label{mE_2nd_freq}\begin{split}
E^{(2)}(u,v) &= - \Lambda_2(\xi_1 \xi_2 m_1(\xi_1) m_1(\xi_2); u) - 4 \Lambda_2(\xi_1 \xi_2 m_2(\xi_1) m_2(\xi_2); v)\\
& \quad - \Lambda_3(\sigma_3; u, v, v),
\end{split}\ee
where the multiplier $\sigma_3$ is to be determined. 

The choice of $\sigma_3$ depends on the structure of $\p_{t} E^{(2)}(u,v)$ which will be computed next. According to (\ref{mE_1st_freq}) and (\ref{mE_2nd_freq}), $E^{(2)}(u,v)$ only differs from $E^{(1)}(u,v)$ by a $\Lam_3$ term, so $\p_{t} E^{(2)}(u,v)$ can also be computed similarly. It turns out that 
\be\label{me_2nd_td}
	\p_t E^{(2)}(u,v) = \ i \Lam_{3}\big(M_{31}; u, v, v \big) + i \Lam_4(M_{41}; v) + i \Lam_4(M_{42}; u,u,v,v),
\ee
where 
\be\label{mpl_3}
	M_{31} =  \xi_1^3 m_{1}^{2}(\xi_1) + 4\xi_2^3 m_{2}^{2}(\xi_2) + 4\xi_3^3 m_{2}^{2}(\xi_3) - \a_3\sigma_3, \quad \forall\, (\xi_1,\xi_2,\xi_3)\in \Gamma_3,
\ee
and
\be\label{mpl_4}\left\{\begin{aligned}
	& M_{41} := \frac{1}{2} (\xi_1 + \xi_4) \sigma_3(\xi_1 + \xi_4, \xi_2, \xi_3), \vspace{0.1in}\\
	& M_{42} := (\xi_2 + \xi_3) \big[ \sigma_3(\xi_1, \xi_2 + \xi_3, \xi_4) + \sigma_3(\xi_1, \xi_4, \xi_2 + \xi_3) \big],
\end{aligned}\right.
\quad \forall\, (\xi_1,\xi_2,\xi_3, \xi_4)\in \Gamma_4.
\ee
Since the lower order operator is usually more difficult to control, we hope to get rid of $\Lambda_3$ in (\ref{me_2nd_td}), that is to make $M_{31}=0$. So we choose $\sigma_3$ as 
\be\label{sigma_3}
\sigma_3(\xi_1, \xi_2, \xi_3) = \frac{\xi_1^3 m_1^2(\xi_1) + 4\xi_2^3 m_2^2(\xi_2) + 4\xi_3^3 m_2^2(\xi_3)}{\alpha_3(\xi_1, \xi_2, \xi_3)}, \quad \forall\, (\xi_1,\xi_2,\xi_3)\in \Gamma_3,
\ee
where $\alpha_3(\xi_1, \xi_2, \xi_3)$ is as defined in (\ref{alpha_3}). Consequently,
\be\label{E2_td}
\p_t E^{(2)}(u,v) = i \Lambda_4 \left( M_{41}; v \right) + i \Lambda_4 \left( M_{42}; u, u, v, v \right).
\ee
Noting that  $\a_3(\xi_1, \xi_2, \xi_3)$ is symmetric with respect to its last two variables $\xi_2$ and $\xi_3$, so $M_{42}$ can be simplified as $M_{42} = 2 (\xi_2 + \xi_3) \sigma_3(\xi_1, \xi_2 + \xi_3, \xi_4)$ for any $(\xi_1,\xi_2,\xi_3, \xi_4)\in \Gamma_4$.


\section{Estimates on the multipliers}
\label{Sec, est_mpl}
\subsection{Choice of the second $I$-operator $I_2$}
\label{Subsec, choice_I2}
Based on (\ref{E2_td}), it reduces to estimate the multipliers $M_{41}$ and $M_{42}$ in order to analyze the growth of the second modified energy $E^{(2)}$. Meanwhile, since both $M_{41}$ and $M_{42}$ are determined by $\sigma_3$ according to (\ref{mpl_4}), the key ingredient will be the estimate on $\sigma_3$. Next, we will first give a pointwise bound on $\sigma_3$ by suitably choosing the constant $\rho_{*}$ in the definition (\ref{m2}) of the second $I$-operator $I_2$.

According to (\ref{sigma_3}) and (\ref{alpha_3}), 
\be\label{sigma_3_cpt}\begin{split}
\sigma_3(\xi_1, \xi_2, \xi_3) &= \frac{\xi_1^3 m_1^2(\xi_1) + 4\xi_2^3 m_2^2(\xi_2) + 4\xi_3^3 m_2^2(\xi_3)}{\xi_1^3 + 4\xi_2^3 + 4\xi_3^3} \\
&= \frac{\xi_1^3 m_1^2(\xi_1) + 4\xi_2^3 m_2^2(\xi_2) + 4\xi_3^3 m_2^2(\xi_3)}{-3\xi_1 (\xi_2 - \xi_3)^2}, \qquad \forall\, (\xi_1,\xi_2,\xi_3)\in \Gamma_3. 
\end{split}\ee
In order to bound $\sigma_3$ pointwisely, the most challenging part is the resonance region, i.e. the region where the denominator of $\sigma_3$ is near $0$. 

Heuristically, the region 
\[\Omega:= \big\{(\xi_1, \xi_2, \xi_3)\in\Gamma_3: \xi_1 \gg N, \ |\xi_2 - \xi_3| \leq \xi_1^{-1/2} \big\},\]
is critical due to the squared denominator in (\ref{sigma_3_cpt}), so $m_2$ has to be suitably chosen so that the numerator of $\sigma_3$ can cancel out the singularity of the denominator. In the following, we denote $N_{j} = |\xi_j|$, $j=1,2,3$. Then in $\Omega$, we have 
\[\xi_1 = N_1 \gg N, \quad \xi_2 = -\frac12 N_1 + O(N_1^{-1/2}), \quad \xi_3 = -\frac12 N_1 + O(N_1^{-1/2}).\]
Hence, the denominator of $\sigma_3$ is bounded by $1$ and $|\xi_i| \gg N$ for $i\in\{1,2,3\}$ in $\Omega$. Then it follows from (\ref{m1}) and (\ref{m2}) that $\xi_1 = N_1$, $m_1(\xi_1) = N^{1-s} N_1^{s-1}$,
\[
	\xi_2 = - \frac12 N_1\big[ 1 + O(N_1^{-3/2}) \big], \quad m_2(\xi_2) = N^{1-s} \rho_{*}^{s-1} \Big(\frac12 N_1\Big)^{s-1}\big[ 1 + O(N_1^{-3/2})  \big]^{s-1}.
\]
The expressions for $\xi_3$ and $m_2(\xi_3)$ are similar to the above. As a result, the numerator of $\sigma_3$ becomes 
\be\label{num_sigma_3}\begin{split}
&\quad\quad \xi_1^3 m_1^2(\xi_1) + 4\xi_2^3 m_2^2(\xi_2) + 4\xi_3^3 m_2^2(\xi_3) \\
& =\,\,\, \Big[ 1 - \Big(\frac{2}{\rho_*}\Big)^{2-2s} \Big] N^{2-2s} N_{1}^{2s+1} + O\Big( N^{2-2s} N_1^{2s-\frac12} \Big). 
\end{split}\ee
From (\ref{num_sigma_3}), it is readily seen that $\rho_{*} := 2$ is the only choice to cancel out the main term, so now we fix the definition of $m_2$ in (\ref{m2}) by choosing $\rho_{*}=2$:
\be\label{m2_fix}
m_2(\xi) = m_1(2 \xi) = 
\left\{\begin{array}{ccl}
	1 & \text{if} &  |\xi| \leq N/2, \\
	N^{1-s} 2^{s-1} |\xi|^{s-1} & \text{if} & |\xi| \geq N.
\end{array}\right.
\ee

\subsection{Properties of $m_1$}
\label{m1_prop}
Recalling the definition of the multiplier $m_1$: 
\[m_1(\xi) = \varphi(N^{-1}\xi), \qquad\forall\,\xi\in\m{R},\] 
where $N\geq 10$ and $\vp$ is a fixed smooth decreasing even function on $\m{R}$ which satisfies (\ref{varphi}) with the range $[0,1]$. We now present three basic properties of $m_1$ which can also be used to study the multiplier $m_2$ since it is a scaling of $m_1$ as defined in (\ref{m2_fix}).

\begin{lemma}\label{Lemma, m1est}
	Let $\frac34\leq s < 1$. Then there exists a constant $C$, which only depends on $s$, such that 
	\be\label{m1est}
		\frac{1}{C} \la \xi \ra^{s-1} \leq m_1(\xi) \leq C N^{1-s} \la \xi \ra^{s-1}, \quad \forall\,\xi\in\m{R}.
	\ee
\end{lemma}
\begin{proof}
	If $|\xi| \leq 2N$, then $m_1(\xi) \leq 1 \leq (3N)^{1-s} \la \xi \ra^{s-1}$. In addition, since $m_1$ is a decreasing function, then $m_1(\xi) \geq m_1(2N) = 2^{s-1} \geq 2^{s-1} \la\xi\ra^{s-1}$. So (\ref{m1est}) is valid for $C = 3^{1-s}$.
	If $|\xi| > 2N$, then $m_1(\xi) = N^{1-s} |\xi|^{s-1}$. As a result, $m_1(\xi) \leq (2N)^{1-s} \la \xi \ra^{s-1}$ and $m_1(\xi) \geq |\xi|^{s-1} \geq \la \xi\ra^{s-1}$. So (\ref{m1est}) also holds for $C = 3^{1-s}$.
\end{proof}

\begin{lemma}\label{Lemma, m1p1}
	Let $s\in [\frac34, 1)$. Then there exists a positive constant $C$, which only depends on $s$, such that 
	\be\label{m1_db}
		|m_1'(\xi)| \leq C \, \frac{m_1(\xi)}{|\xi|}, \qquad |m_1''(\xi)| \leq C \, \frac{m_1(\xi)}{|\xi|^2}, \qquad\forall\,\xi\neq 0.
	\ee
\end{lemma}
\begin{proof}
	Since $m_1(\xi) = \varphi(N^{-1}\xi)$, then 
	\[m_1'(\xi) = N^{-1} \vp'(N^{-1}\xi), \qquad m_1''(\xi) = N^{-2} \vp''(N^{-1}\xi).\]
	Note that although $m_1$ depends on the parameter $N$, the constant $C$ in (\ref{m1_db}) needs to be independent of $N$.
	\begin{itemize}
		\item Case 1: $0 < |\xi| \leq N$. In this case, (\ref{m1_db}) holds trivially since $m_1'(\xi) = m_1''(\xi) = 0$.
		
		\item Case 2: $N \leq |\xi| \leq 2N$. 
		
		Firstly, since $\vp$ is a fixed smooth function on $\m{R}$, there exists an absolute constant $C_0$ such that $|\vp'(\xi)| + |\vp''(\xi)| \leq C_0$ for any $1\leq |\xi| \leq 2$. As a result, 
		\[
			|m_1'(\xi)| = N^{-1} |\vp'(N^{-1}\xi)| \leq C_0 N^{-1}, \quad |m_1''(\xi)| = N^{-2} |\vp''(N^{-1}\xi)| \leq C_0 N^{-2}.
		\]
		Meanwhile, since $\vp$ is decreasing and even, 
		\[
			\frac{m_1(\xi)}{|\xi|} = \frac{\vp(N^{-1}\xi)}{|\xi|} \geq \frac{\vp(2)}{2N} = \frac{2^{s-2}}{N}, 
			\qquad 
			\frac{m_1(\xi)}{|\xi|^2} \geq \frac{\vp(2)}{(2N)^2} = \frac{2^{s-3}}{N^2}.
		\]
		Therefore, (\ref{m1_db}) is satisfied by requiring $C \geq 2^{3-s} C_0$.
		
		\item Case 3: $|\xi| \geq 2N$. 
		
		In this case, $|N^{-1}\xi| \geq 2$, so $m_1(\xi) = \vp(N^{-1}\xi) = N^{1-s} |\xi|^{s-1}$, which implies that 
		\[
			|m_1'(\xi)| = (1-s) N^{1-s} |\xi|^{s-2}, \qquad  |m_1''(\xi)| = (1-s)(2-s) N^{1-s} |\xi|^{s-3}.
		\]
		Hence, (\ref{m1_db}) holds by requiring $C \geq (1-s)(2-s)$.
	\end{itemize}
Combining the above three cases, (\ref{m1_db}) holds by choosing $C = 2^{3-s}C_0 + (1-s)(2-s)$.
\end{proof}

\begin{lemma}\label{Lemma, m1p2}
	Let $s\in [\frac34, 1)$. If $\frac{1}{C}|\xi| \leq |\eta| \leq C |\xi|$ for some $C\geq 1$, then 
	\be\label{m1_ineq}
		(2C)^{s-1} m_1(\eta) \leq m_1(\xi) \leq (2C)^{1-s} m_1(\eta).
	\ee
\end{lemma}
\begin{proof}
	Let $s\in [\frac34, 1)$ and $C \geq 1$. We first prove that if $|\eta| \leq C |\xi|$, then
	\be\label{vp_compa}
		\vp(\xi) \leq (2C)^{1-s} \vp(\eta).
	\ee
	Without loss of generality, we can assume that $|\eta|\geq |\xi|$, otherwise (\ref{vp_compa}) holds trivially since $\vp$ is a decreasing function.
	\begin{itemize}
		\item Case 1: $|\xi|\leq \frac{1}{C}$. In this case, (\ref{vp_compa}) holds automatically since $\vp(\xi) = \vp(\eta) = 1$.
		
		\item Case 2: $|\xi| \geq 2$. In this case, $\vp(\eta) \geq \vp(C\xi) = C^{s-1}|\xi|^{s-1}$, so 
		\[
			\vp(\xi) = |\xi|^{s-1} \leq C^{1-s} \vp(\eta),
		\]
		which justifies (\ref{vp_compa}).
		
		\item Case 3: $\frac{1}{C} \leq |\xi| \leq 2$. In this case, $|\eta| \leq C|\xi| \leq 2C$, so 
		$\vp(\eta) \geq \vp(2C) = (2C)^{s-1}$. Therefore, (\ref{vp_compa}) holds since $\vp(\xi)\leq 1$.
	\end{itemize}
Noting that $m_{1}(x) = \vp(N^{-1}x)$ for any $x\in\m{R}$, so (\ref{m1_ineq}) follows from (\ref{vp_compa}).
\end{proof}

Since the range for $s$ in this paper is restricted to $[\frac34, 1)$, for convenience of notations, we write $f \ls g$ if $|f| \leq C |g|$, where $C$ is some positive constant only depending on $s$. The notation $f \gs g$ can be interpreted in a similar way, and $f \sim g$ means that both $f\ls g$ and $f\gs g$ hold. With this convention, Lemma \ref{Lemma, m1p2} implies that 
\be\label{m1_compa}
	m_1(\xi) \sim m_1(\eta) \quad \text{whenever} \quad \xi \sim \eta.
\ee

\subsection{Key estimate on the multiplier $\sigma_3$}
With the choice of $m_2$ in (\ref{m2_fix}), we will prove that $\sigma_3$ is bounded pointwise in Lemma \ref{Lemma, est on sigma_3} below. 


\begin{lemma}\label{Lemma, est on sigma_3}
Let $\frac{3}{4} \leq s < 1$ and $N \geq 10$. Define $m_2$ as in (\ref{m2_fix}) and $\sigma_3$ as in (\ref{sigma_3_cpt}). Let
\[(\xi_1, \xi_2, \xi_3)\in\Gamma_3, \quad N_i = |\xi_i|, \quad\forall\, i=1,2,3.\]
Then $\sigma_3(\xi_1, \xi_2, \xi_3)$ is pointwisely bounded as stated below.
\begin{enumerate}
	\item[(1)] If $ \max\limits_{1\leq i\leq 3} |\xi_i| \leq N/2 $, then $\sigma_3(\xi_1,\xi_2,\xi_3) = 1$;
	\item[(2)] If $ \max\limits_{1\leq i\leq 3} |\xi_i| \geq N/2 $ and 
	\begin{enumerate}
		\item $|\xi_2 - \xi_3| \leq 10 |\xi_1|$, then $|\sigma_3(\xi_1,\xi_2,\xi_3)| \leq C N_1^{2s-2} N^{2-2s}$;
		
		\item $|\xi_1| \leq \frac{1}{10} |\xi_2 - \xi_3|$, then $|\sigma_3(\xi_1,\xi_2,\xi_3)| \leq  C (\max\{N_2,N_3\})^{2s-2} N^{2-2s}$;
	\end{enumerate}
\end{enumerate}
where $C$ is some constant only depending on $s$. Thus, 
\be\label{est on sigma_3}
	|\sigma_3(\xi_1, \xi_2, \xi_3)| \leq C N_{max}^{2s-2} N^{2-2s} \leq C, 
\ee
where $N_{max} = N + \max\{N_1,N_2,N_3\}$.
\end{lemma}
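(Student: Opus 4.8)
The plan is to bound $\sigma_3 = N(\xi)\big/\bigl(-3\xi_1(\xi_2-\xi_3)^2\bigr)$, where I abbreviate the numerator as
\[
N(\xi) = \xi_1^3 m_1^2(\xi_1) + 4\xi_2^3 m_2^2(\xi_2) + 4\xi_3^3 m_2^2(\xi_3),
\]
by showing that $N$ vanishes to the appropriate order on each of the two resonant sets cut out by the factored denominator (\ref{sigma_3_cpt}), namely $\{\xi_2=\xi_3\}$ and $\{\xi_1=0\}$, and then quantifying this vanishing via Taylor's theorem with integral remainder. The single most important input is the identity $m_2(\xi)=m_1(2\xi)$ forced by the choice $\rho_*=2$ in (\ref{m2_fix}): at the doubly-resonant point $\xi_2=\xi_3=-\xi_1/2$ it gives $m_2(-\xi_1/2)=m_1(\xi_1)$, whence $N=\xi_1^3\bigl[m_1^2(\xi_1)-m_2^2(\xi_1/2)\bigr]=0$. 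This is exactly the cancellation that the heuristic computation (\ref{num_sigma_3}) was designed to produce, and it is what renders the second-order zero of the denominator at $\xi_2=\xi_3$ harmless. The two cases in the statement correspond precisely to which factor of the denominator is active: $|\xi_2-\xi_3|\lesssim|\xi_1|$ is a neighborhood of $\{\xi_2=\xi_3\}$, while $|\xi_1|\lesssim|\xi_2-\xi_3|$ is a neighborhood of $\{\xi_1=0\}$.

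For the derivative estimates I will use the standard symbol bounds $|m_j^{(k)}(\xi)|\lesssim m_j(\xi)\,|\xi|^{-k}$, which follow from $m_1,m_2\in C^\infty$ together with (\ref{m1}) and (\ref{m2_fix}). Writing $g(\xi)=\xi^3 m_2^2(\xi)$ and $p(\xi)=\xi^3 m_1^2(\xi)$, these give $|g''(\xi)|,|p''(\xi)|\lesssim N^{2-2s}|\xi|^{2s-1}$ and $|g'(\xi)|,|p'(\xi)|\lesssim N^{2-2s}|\xi|^{2s}$, valid for all $\xi$; for $|\xi|\le N$ one simply checks $N^{2-2s}|\xi|^{2s-1}\ge c|\xi|$ and $N^{2-2s}|\xi|^{2s}\ge c|\xi|^2$, using $\tfrac34\le s<1$.

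In Case (1) I parametrize the resonance by $\xi_2=-\tfrac12\xi_1+t$, $\xi_3=-\tfrac12\xi_1-t$ with $t=\tfrac12(\xi_2-\xi_3)$, viewing $N(t)=p(\xi_1)+4g(\xi_2)+4g(\xi_3)$ as a function of $t$ with $\xi_1$ fixed. The cancellation above gives $N(0)=0$, and $N'(0)=\bigl[4g'(\xi_2)-4g'(\xi_3)\bigr]_{t=0}=0$, so Taylor's formula yields $N(t)=\int_0^t(t-\tau)N''(\tau)\,d\tau$ with $N''(\tau)=4\bigl[g''(\xi_2(\tau))+g''(\xi_3(\tau))\bigr]$. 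Since $|t|\lesssim|\xi_1|$ keeps $|\xi_2(\tau)|,|\xi_3(\tau)|\lesssim N_1$ throughout, the second-derivative bound gives $|N(t)|\lesssim t^2 N^{2-2s}N_1^{2s-1}\sim(\xi_2-\xi_3)^2 N^{2-2s}N_1^{2s-1}$, and dividing by $3|\xi_1|(\xi_2-\xi_3)^2$ produces $|\sigma_3|\lesssim N^{2-2s}N_1^{2s-2}$. In Case (2) I instead treat $N$ as a function of $\xi_1$ along $\Gamma_3$ with $a:=\xi_2-\xi_3$ held fixed; here only a simple zero is needed, since the denominator carries a single power of $\xi_1$. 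One checks $N|_{\xi_1=0}=0$ using that $m_2$ is even, so $N=\int_0^{\xi_1}h'(\tau)\,d\tau$ with $h'(\tau)=p'(\tau)-2g'(\xi_2(\tau))-2g'(\xi_3(\tau))$; the constraint $|\xi_1|\lesssim|a|$ keeps all three arguments $\lesssim|a|\sim\max\{N_2,N_3\}$, and the first-derivative bound gives $|N|\lesssim|\xi_1|N^{2-2s}|a|^{2s}$, so that $|\sigma_3|\lesssim N^{2-2s}(\max\{N_2,N_3\})^{2s-2}$.

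Finally, to pass to the uniform bound (\ref{est on sigma_3}), I note that in Case (1) the hypothesis forces $N_2,N_3\lesssim N_1$, hence $N_{max}\sim N_1$, while in Case (2) it forces $N_1\lesssim|a|$ and $\max\{N_2,N_3\}\gtrsim|a|/2$, hence $N_{max}\sim\max\{N_2,N_3\}$; in both situations the standing hypothesis $N_{max}\ge N/2$ together with $2s-2<0$ yields $N_{max}^{2s-2}N^{2-2s}\le C$. I expect the main obstacle to be bookkeeping rather than conceptual: verifying the symbol bounds uniformly across the transition zones $|\xi|\in[N,2N]$ for $m_1$ and $[N/2,N]$ for $m_2$, and confirming that the frequencies remain comparable along the entire segment of integration in each Taylor expansion, so that the factors $|\xi_j(\tau)|^{2s-1}$ (resp. $|\xi_j(\tau)|^{2s}$) can be replaced by $N_1^{2s-1}$ (resp. $|a|^{2s}$) without loss.
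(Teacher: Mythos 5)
Your proof is correct, and it rests on the same mechanism as the paper's: the choice $\rho_*=2$ forces the numerator to vanish on the resonant sets cut out by the factored denominator, and Taylor expansion converts that vanishing into the stated bounds. The differences are organizational but worth noting. The paper treats three cases: for $|\xi_2-\xi_3|\ll|\xi_1|$ it writes the numerator as a symmetric second difference $-\frac12\left[f(\xi_1+\delta)-2f(\xi_1)+f(\xi_1-\delta)\right]$ of $f(\xi)=\xi^3m_1^2(\xi)$ and uses a Lagrange-remainder Taylor bound; for $|\xi_2-\xi_3|\sim|\xi_1|$ it uses the crude bounds $|\eta_3|\lesssim N_1^3m_1^2(N_1)$ and $|\alpha_3|\sim N_1^3$; and for $|\xi_1|\ll|\xi_2-\xi_3|$ it splits the numerator into three pieces and controls the delicate one, $4\xi_3^3\left[m_1^2(2\xi_3)-m_1^2(2\xi_2)\right]$, by the mean value theorem, exploiting that $\xi_2+\xi_3=-\xi_1$ is small. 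Your version unifies the paper's first two cases into a single integral-remainder expansion based on $N(0)=N'(0)=0$ (legitimate because your symbol bound for $g''$ holds uniformly on $|\xi|\lesssim N_1$, including near $\xi=0$ and across the transition zones, which you verify), and it replaces the three-piece splitting by the cleaner observation that the numerator vanishes at $\xi_1=0$ by oddness of $\xi\mapsto\xi^3 m_2^2(\xi)$, followed by the fundamental theorem of calculus along the segment $|\tau|\leq|\xi_1|\lesssim|a|$. The two treatments of the second resonance are the same calculus content in different packaging: yours makes transparent that the order of vanishing of the numerator matches the order of the zero of the denominator at each resonant set (second order at $\xi_2=\xi_3$, first order at $\xi_1=0$), which is the more systematic viewpoint and would adapt more readily to other resonant systems; the paper's term-by-term splitting instead displays explicitly which contribution dominates. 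Both arguments rely on the standard symbol hypotheses $|m_j^{(k)}(\xi)|\lesssim m_j(\xi)|\xi|^{-k}$ in the transition regions, which the paper likewise invokes without proof, so this is not a gap peculiar to your write-up.
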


\begin{proof}
Firstly, it is readily seen that the estimate (\ref{est on sigma_3}) follows immediately from the two statements (1) and (2). Secondly, statement (1) can be justified directly from the definitions of $\sigma_3$, $m_1$ and $m_2$ in (\ref{sigma_3_cpt}), (\ref{m1}) and (\ref{m2_fix}) respectively.
So it remains to prove statement (2) in which $\max\limits_{1\leq i\leq 3} |\xi_i| \geq N/2 $.

Denote the numerator and the denominator of $\sigma_3$ to be $\eta_3$ and $\alpha_3$ respectively. Since $m_2(\xi) = m_1(2\xi)$, then 
\be\label{eta3}
	\eta_3 = \xi_1^3 m_1^2(\xi_1) + 4\xi_2^3 m_1^2(2\xi_2) + 4\xi_3^3 m_1^2(2\xi_3).
\ee
In addition, we recall that $\alpha_3 = \xi_1^3 + 4\xi_2^3 + 4\xi_3^3 = -3\xi_1 (\xi_2 - \xi_3)^2$.
The most challenging case is when $|\xi_2-\xi_3|$ is small, so we denote $\delta = \xi_2 - \xi_3$ to track this difference so that $\a_3 = -3\xi_1\delta^2$.
Since $\xi_1 = - \xi_2 - \xi_3$, we find 
\be\label{xi_delta}
	\xi_2 = -\frac12 (\xi_1 - \delta), \quad \xi_3 = -\frac12 (\xi_1 + \delta).
\ee
Next, we divide the proof into three cases.
\begin{itemize}
\item \textbf{Case 1:} $|\xi_2 - \xi_3| \leq \frac{1}{10} |\xi_1|$. \\
In this case, $|\delta| \leq \frac{1}{10} |\xi_1|$, so $|\xi_2| \sim |\xi_3| \sim |\xi_1| \sim N_1$. Substituting \eqref{xi_delta} into $\eta_3$ yields
\[
\eta_3 = -\frac{1}{2} \left[ f(\xi_1 + \delta) - 2f(\xi_1) + f(\xi_1 - \delta) \right],
\]
where $f(\xi) := \xi^3 m_1^2(\xi)$.
By Taylor expansion, 
\[ 
	|\eta_3| \leq \frac{1}{2} |f''(\xi_1 + \theta \delta)| \delta^2, \quad \text{for some}\,\, \theta \in [-1,1].
 \] 
Since $|\xi_1 + \theta \delta| \sim |\xi_1| \gs N$, it then follows from Lemma \ref{Lemma, m1p1} and Lemma \ref{Lemma, m1p2} that
\[ m_1(\xi_1+\th\delta) \ls m_1(\xi_1), \quad |m_{1}'(\xi_1 + \th\delta)| \ls \frac{m_1(\xi_1)}{|\xi_1|}, \quad |m_{1}''(\xi_1 + \th\delta)| \ls \frac{m_1(\xi_1)}{|\xi_1|^2}.\]
Therefore, $|f''(\xi_1 + \theta \delta)| \ls |\xi_1| m_{1}^{2}(\xi_1)$ and
\[
	|\sigma_3| = \Big| \frac{\eta_3}{\a_3} \Big| \ls \frac{|\xi_1| m_{1}^{2}(\xi_1) \delta^2}{3 |\xi_1| \delta^2} = \frac13 m_{1}^{2}(\xi_1) \sim m_1^2(N_1) \ls N_1^{2s-2} N^{2-2s}.
\]

\item \textbf{Case 2:} $ \frac{1}{10} |\xi_1| \leq |\xi_2 - \xi_3| \leq 10 |\xi_1|$. \\
In this case, $|\delta| \sim |\xi_1|$, so it follows from (\ref{xi_delta}) that $|\xi_2| \ls |\xi_1|$ and $|\xi_3| \ls |\xi_1|$. As a result, it follows from (\ref{eta3}) and Lemma \ref{Lemma, m1p2} that
\[
|\eta_3| \ls N_1^3 m_{1}^{2}(N_1), \quad |\a_3| \sim |\xi_1| \delta^2 \sim N_1^3.
\]
Thus, $|\sigma_3| \ls m_{1}^{2}(N_1) \ls N_1^{2s-2} N^{2-2s}$.

\item \textbf{Case 3:} $ |\xi_2 - \xi_3| \geq 10 |\xi_1| $. \\
In this case, $|\xi_1| \leq |\xi_2 - \xi_3|/10 = |\delta|/10$, then it follows from (\ref{xi_delta}) that $|\xi_2|\sim |\xi_3| \sim |\delta| \gg |\xi_1|$, and $\xi_2$ and $\xi_3$ have opposite signs. 
Meanwhile, since $\xi_2$ and $\xi_3$ are symmetric, we can assume that $N_2 \geq N_3$, so it reduces to prove that $|\sigma_3| \ls N_2^{2s-2} N^{2-2s}$. We split $\eta_3$ into three pieces:
\[
	\eta_3 = \underbrace{\xi_1^3 m_1^2(\xi_1)}_{I_1} + \underbrace{(4\xi_2^3 + 4\xi_3^3) m_1^2(2\xi_2)}_{I_2} + \underbrace{4\xi_3^3 \left[ m_1^2(2\xi_3) - m_1^2(2\xi_2)\right]}_{I_3}.
\]
Estimating each term:
\[\begin{array}{l}
	|I_1| = N_1^3 m_{1}^{2}(N_1), \vspace{0.1in} \\
	|I_2| = \big| 4(\xi_2+\xi_3)(\xi_2^2 - \xi_2\xi_3 + \xi_3^2)m_1^2(2\xi_2) \big| \sim N_1 N_2^2 m_{1}^{2}(2N_2),  \vspace{0.1in} \\
	|I_3| \sim N_3^3 \big[ m_1(2\xi_3) + m_1(2\xi_2) \big] \big| m_1(2\xi_3) - m_1(2\xi_2) \big|.
\end{array}\]
Since $m_1$ is an even function and $\xi_2$ and $\xi_3$ have opposite signs, combining this with Lemma \ref{Lemma, m1p1} and Lemma \ref{Lemma, m1p2} yields
\[\begin{split}
	\big| m_1(2\xi_3) - m_1(2\xi_2) \big| &= \big| m_1(2\xi_3) - m_1(-2\xi_2) \big| \\
	&\sim |m_1'(2N_2)| |\xi_3+\xi_2| \sim N_2^{-1} m_1(2N_2) N_1.
\end{split}\]
Thus, $|I_3| \sim N_3^3 N_2^{-1} m_1^2(2N_2) N_1$ and 
\[ |\eta_3| \ls N_1^3 m_{1}^{2}(N_1) + N_1 N_2^2 m_{1}^{2}(2N_2) + N_3^3 N_2^{-1} m_1^2(2N_2) N_1 \ls N_1 N_2^2 m_{1}^{2}(2N_2). \]
On the other hand, 
\[|\a_3| = |3\xi_1(\xi_2-\xi_3)|^2 \sim N_1 N_2^2. \]
Consequently, 
\[
	|\sigma_3| \sim \frac{|\eta_3|}{|\a_3|} \ls m_1^2(2N_2) \ls N^{2-2s} N_2^{2s-2}. 
\]
\end{itemize}

Combining the above Case 1--Case 3, Lemma \ref{Lemma, est on sigma_3} is justified.
\end{proof}

\subsection{Estimate on the multipliers $M_{41}$ and $M_{42}$}
Thanks to Lemma \ref{Lemma, est on sigma_3}, we are ready to address the estimates for the multipliers $M_{41}$ and $M_{42}$.
\begin{lemma}\label{Lemma, est on M4}
Let $\frac{3}{4} \leq s < 1$ and $N \geq 10$. Let $M_{41}$ and $M_{42}$ be as defined in  (\ref{mpl_4}). Then
\[
	|M_{41}(\xi_1,\xi_2,\xi_3,\xi_4)| + |M_{42}(\xi_1,\xi_2,\xi_3,\xi_4)| \leq C N_{max}^{2s-1} N^{2-2s}, \quad \forall\, (\xi_1, \xi_2, \xi_3, \xi_4)\in\Gamma_4,
\]
where $N_{max} := N + \max\limits_{1 \leq i \leq 4} |\xi_i|$, and $C$ is a constant only depending on $s$.
\end{lemma}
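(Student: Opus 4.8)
The plan is to deduce both bounds directly from the pointwise control of $\sigma_3$ in Lemma \ref{Lemma, est on sigma_3}, using the structural fact that each of $M_{41}$ and $M_{42}$ is a single linear factor multiplied by one evaluation of $\sigma_3$ at a \emph{merged} triple of frequencies that still lies on $\Gamma_3$. For $M_{41}$ this triple is $(\xi_1+\xi_4,\xi_2,\xi_3)$, and for $M_{42}$, after invoking the symmetry of $\sigma_3$ in its last two arguments to write $M_{42}=2(\xi_2+\xi_3)\sigma_3(\xi_1,\xi_2+\xi_3,\xi_4)$ as noted below (\ref{E2_td}), it is $(\xi_1,\xi_2+\xi_3,\xi_4)$. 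In each case $(\xi_1,\xi_2,\xi_3,\xi_4)\in\Gamma_4$ forces the merged triple into $\Gamma_3$, so Lemma \ref{Lemma, est on sigma_3} applies. The whole point is that the accompanying linear factor $|\xi_1+\xi_4|$ (resp. $|\xi_2+\xi_3|$) is dominated by the largest entry of its triple, which converts the exponent $2s-2$ from (\ref{est on sigma_3}) into the desired $2s-1$.

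Concretely, I would first upgrade (\ref{est on sigma_3}) to a case-free statement valid on all of $\Gamma_3$: for every $(\zeta_1,\zeta_2,\zeta_3)\in\Gamma_3$,
\[
|\sigma_3(\zeta_1,\zeta_2,\zeta_3)| \leq C\,\widetilde{N}^{\,2s-2}N^{2-2s}, \qquad \widetilde{N}:=\max\{|\zeta_1|,|\zeta_2|,|\zeta_3|\}.
\]
For $\widetilde{N}\geq N/2$ this is precisely (\ref{est on sigma_3}); for $\widetilde{N}\leq N/2$ one has $\sigma_3\equiv 1$ (as recorded before Lemma \ref{Lemma, est on sigma_3}), and since $2-2s>0$ and $\widetilde{N}\leq N$ the right-hand side is already $\geq 1$, so the estimate persists. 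Applying this to the merged triple of $M_{41}$ with maximal entry $\widetilde{N}_1:=\max\{|\xi_1+\xi_4|,|\xi_2|,|\xi_3|\}$, and using $|\xi_1+\xi_4|\leq\widetilde{N}_1$ together with $2s-1\geq\tfrac12>0$, gives
\[
|M_{41}| = \tfrac12|\xi_1+\xi_4|\,|\sigma_3(\xi_1+\xi_4,\xi_2,\xi_3)| \leq \tfrac{C}{2}\,\widetilde{N}_1^{\,2s-1}N^{2-2s}.
\]
Finally the triangle inequality $|\xi_1+\xi_4|\leq 2N_{max}$ yields $\widetilde{N}_1\leq 2N_{max}$, and monotonicity of $x\mapsto x^{2s-1}$ gives $\widetilde{N}_1^{\,2s-1}\leq 2^{2s-1}N_{max}^{2s-1}$. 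The identical computation for $(\xi_1,\xi_2+\xi_3,\xi_4)$, with $|\xi_2+\xi_3|\leq 2N_{max}$, bounds $|M_{42}|$, and summing the two estimates produces the claim with an admissible constant $C_s$.

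This argument has no genuine obstacle once Lemma \ref{Lemma, est on sigma_3} is in hand; the only delicate point is the bookkeeping that the merged-triple maximum $\widetilde{N}_1$ (a maximum over three coordinates, one of which is a sum of two of the original frequencies) is comparable to $N_{max}$, and that this comparison runs in the favorable direction. That favorability is exactly the positivity of the target exponent $2s-1$: it is what lets the single linear factor absorb one power of frequency and simultaneously lets $\widetilde{N}_1\leq 2N_{max}$ be used as an upper bound rather than a lower one. Thus the regularity threshold $s\geq\tfrac34$ enters here only through $2s-1>0$, the genuinely restrictive input having already been spent in establishing Lemma \ref{Lemma, est on sigma_3}.
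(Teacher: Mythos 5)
Your proposal is correct and follows essentially the same route as the paper: both proofs write $M_{41}$ and $M_{42}$ as a linear factor times $\sigma_3$ evaluated at a merged triple on $\Gamma_3$, invoke Lemma \ref{Lemma, est on sigma_3}, and use $2s-1>0$ to absorb the linear factor into $N_{max}^{2s-1}$. The only difference is organizational: the paper runs a two-case analysis per multiplier mirroring statements (1)--(2) of Lemma \ref{Lemma, est on sigma_3}, whereas you use the unified bound (\ref{est on sigma_3}), extended trivially to the region $\max_i|\zeta_i|\leq N/2$ where $\sigma_3\equiv 1$; this treats all cases uniformly and also covers merged triples lying entirely below frequency $N/2$, a point the paper's proof passes over silently.
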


\begin{proof}
According to (\ref{mpl_4}) and the symmetry of $\sigma_{3}$ in its last two variables, we have 
\[ 
	M_{41} = \frac{1}{2} (\xi_1 + \xi_4) \sigma_3(\xi_1 + \xi_4, \xi_2, \xi_3), \quad M_{42} = 2  (\xi_2 + \xi_3) \sigma_3(\xi_1, \xi_2 + \xi_3, \xi_4).
\]
We first handle $M_{41}$. 
If $\max\{ |\xi_1 + \xi_4|, |\xi_2|, |\xi_3| \} \leq \frac{N}{2}$, then it follows from Part (1) in Lemma \ref{Lemma, est on sigma_3} that $\sigma_3(\xi_1 + \xi_4, \xi_2, \xi_3) = 1$. As a result,
\[ 
	|M_{41}(\xi_1,\xi_2,\xi_3,\xi_4)| = \frac12 |\xi_1 + \xi_4| \leq \frac{N}{4} \leq \frac14 N_{max}^{2s-1} N^{2-2s}.
\]	
Next, we assume $\max\{ |\xi_1 + \xi_4|, |\xi_2|, |\xi_3| \} \geq \frac{N}{2}$ and divide the rest proof into two cases.
\begin{itemize}
	\item \textbf{Case 1:} $|\xi_2 - \xi_3| \leq 10|\xi_1 + \xi_4|$. \\
	In this case, it follows from Lemma \ref{Lemma, est on sigma_3} that 
	\[ |M_{41}| \ls |\xi_1 + \xi_4| |\xi_1 + \xi_4|^{2s-2} N^{2-2s} =  |\xi_1 + \xi_4|^{2s-1} N^{2-2s}.\]
	Since $s\geq 3/4$, then $2s-1>0$, so $|M_{41}| \ls N_{max}^{2s-1} N^{2-2s}$.
	
	\item \textbf{Case 2:} $ |\xi_1 + \xi_4| \leq \frac{1}{10}|\xi_2 - \xi_3|$. \\
	In this case, it again follows from Lemma \ref{Lemma, est on sigma_3} and $2s-2<0$ that 
	\[|M_{41}| \ls |\xi_1 + \xi_4| |\xi_2|^{2s-2} N^{2-2s}. \]
	Noting 
	\be\label{eta2} 
		\xi_2 = \frac12 \big[ (\xi_2+\xi_3) + (\xi_2-\xi_3) \big] = \frac12 \big[ -(\xi_1+\xi_4) + (\xi_2-\xi_3) \big],
	\ee
	so we take advantage of the assumption $ |\xi_1 + \xi_4| \leq \frac{1}{10} |\xi_2 - \xi_3|$ and (\ref{eta2}) to obtain
	\[|\xi_2| \geq |\xi_2 - \xi_3|/3 \geq 3|\xi_1 + \xi_4|.\] 
	Hence, 
	\[|M_{41}| \ls |\xi_2|^{2s-1} N^{2-2s} \leq N_{max}^{2s-1} N^{2-2s}. \]
\end{itemize}

We next analyze $M_{42}$. Similarly to the discussion in the $M_{41}$ case, it suffices to consider the case when $\max\{|\xi_1|, |\xi_2+\xi_3|, |\xi_4|\} \geq \frac{N}{2}$. We then again divide the rest argument into two parts.
\begin{itemize}
	\item \textbf{Case 1:} $ |\xi_2 + \xi_3 - \xi_4| \leq 10|\xi_1|$. \\
	In this case, it follows from Lemma \ref{Lemma, est on sigma_3} that 
	\[ |M_{42}| \ls |\xi_2+\xi_3| |\xi_1|^{2s-2} N^{2-2s}. \]
	Since $|\xi_2+\xi_3-\xi_4| \ls |\xi_1|$, then 
	\[\begin{split}
		|\xi_2+\xi_3| &= \big| \frac12(\xi_2+\xi_3+\xi_4) + \frac12 (\xi_2+\xi_3-\xi_4) \big| \\
		& = \big| -\frac12 \xi_1 + \frac12 (\xi_2+\xi_3-\xi_4) \big| \ls |\xi_1|,
	\end{split}\]
	which implies that $|M_{42}| \ls |\xi_1|^{2s-1} N^{2-2s} \ls N_{max}^{2s-1} N^{2-2s}$.
	
	\item \textbf{Case 2:} $|\xi_1| \leq \frac{1}{10} |\xi_2 + \xi_3 - \xi_4|$. \\
	In this case, it follows from Lemma \ref{Lemma, est on sigma_3} that 
	\[ |M_{42}| \ls |\xi_2+\xi_3|  |\xi_2+\xi_3|^{2s-2} N^{2-2s} =  |\xi_2+\xi_3|^{2s-1} N^{2-2s} \ls N_{max}^{2s-1} N^{2-2s}. \]
\end{itemize}
Combining all the above situations together justifies Lemma \ref{Lemma, est on M4}.
\end{proof}

\section{Growth estimate of modified energies}
\label{Sec, est_me}
\subsection{Lifespan of local solutions}
For $s\in \big[ \frac34, 1 \big)$, it has been shown in \cite{YZ22a} that the Majda-Biello system (\ref{MB-4}) with the dispersion coefficient $\a=4$ is locally well-posed for any $s\geq \frac34$. The local solution lives in the Fourier restriction space whose definition is given below. 
\begin{definition}\label{Def, FR space on R}
	For any $\alpha, s, b \in \mathbb{R}$ with $\alpha \neq 0$, the Fourier restriction space $X^{\alpha}_{s, b}(\mathbb{R}^2)$ is defined as the completion of the Schwartz space $\mathscr{S}(\mathbb{R}^{2})$ with the norm
	\be\label{FR norm on R}
	\|w\|_{ X^{\a}_{s,b}( \m{R}^2 ) } := \|\la \xi \ra ^{s} \la \tau - \a \xi^3 \ra^{b} \wh{w}(\xi,\tau)\|_{ L^{2}_{\xi\tau} (\m{R}^2) },
	\ee
	where $\la \cdot \ra = 1 + |\cdot|$, and $\wh{w}$ refers to the space-time Fourier transform of $w$. 
	Moreover, for any $T>0$,  $X^{\a}_{s,b} (\m{R} \times [0,T])$ denotes the
	restriction of $X^{\a}_{s,b}(\m{R}^2)$ on the domain $\m{R} \times [0,T]$ which is a Banach space when equipped with the usual quotient norm.
\end{definition}
According to the above notations and \cite{YZ22a}, the local solution $(u,v)$ of (\ref{MB-4}) lives in the space $X_{s,b}^{1}(\m{R} \times [0,T]) \times X_{s,b}^{4}(\m{R} \times [0,T])$ for any initial data $(u_0, v_0)\in \mathcal{H}^{s}$. Since the modified energies in the previous section involve $I$-operators, we next provide another version of the local well-posedness theory in which the $I$-operators play a role. This result will lay the foundation for estimating the growth rate of the modified energy $E^{(2)}(u,v)$. 

\begin{proposition}\label{Prop, ls}
Let $\frac34\leq s < 1$, $\frac12 < b \leq 1$ and $(u_0, v_0) \in \mathcal{H}^{s}(\m{R})$. Then there exists a constant $\eps_{*}$, which only depends on $s$ and $b$, such that if 
\be\label{initial_small}
	F_{0} := \|(I_1 u_0, I_2 v_0)\|_{\mathcal{H}^{1}(\m{R})} \leq \eps_{*},
\ee 
then the lifespan of the local solution $(u,v)$ of (\ref{MB-4}) is at least $1$ and 
\be\label{ub_ls}
	\|I_1 u\|_{X^{1}_{1,b} (\m{R} \times [0, 1])} + \|I_2 v\|_{X^{4}_{1,b} (\m{R} \times [0, 1])} \leq C F_{0},
\ee
where $C$ is a positive constant which only depends on $s$ and $b$.
\end{proposition}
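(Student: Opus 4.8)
The plan is to run a contraction mapping argument directly for the pair $(I_1 u, I_2 v)$ in the Bourgain spaces $X^1_{1,b}(\m{R}\times[0,1])\times X^4_{1,b}(\m{R}\times[0,1])$, using the smallness of $F_0$ to pin the time of existence at $1$. Since $\la\xi\ra^s\ls\la\xi\ra m_j(\xi)$ for $j=1,2$, the hypothesis $F_0\leq\eps_*$ already controls $\|(u_0,v_0)\|_{\mathcal{H}^s}$, so by \cite{YZ22a} the local solution $(u,v)$ exists and is unique in $X^1_{s,b}\times X^4_{s,b}$; it therefore suffices to produce, via a fixed point in the smaller $I$-weighted spaces, a pair whose norm obeys (\ref{ub_ls}) and then to identify it with $(u,v)$ by uniqueness. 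Because the Fourier multipliers $m_1,m_2$ commute with the linear propagators $\e^{-t\p_x^3}$ and $\e^{-4t\p_x^3}$, applying $I_1,I_2$ to the Duhamel formulation produces a closed system for $U:=I_1u$ and $V:=I_2v$ whose nonlinear terms are $I_1(vv_x)$ and $I_2((uv)_x)$ with $u=I_1^{-1}U$, $v=I_2^{-1}V$ read off as Fourier multiplier operations; these inverse operators never appear in isolation, only through the bounded bilinear maps below.

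The heart of the matter is the pair of $I$-modified bilinear estimates
\[
\|I_1(vv_x)\|_{X^1_{1,b-1}}\ls\|I_2v\|_{X^4_{1,b}}^2,\qquad
\|I_2((uv)_x)\|_{X^4_{1,b-1}}\ls\|I_1u\|_{X^1_{1,b}}\,\|I_2v\|_{X^4_{1,b}}.
\]
I would deduce these from the unweighted bilinear estimates at regularity $s$ established in \cite{YZ22a}, namely $\|vv_x\|_{X^1_{s,b-1}}\ls\|v\|^2_{X^4_{s,b}}$ and $\|(uv)_x\|_{X^4_{s,b-1}}\ls\|u\|_{X^1_{s,b}}\|v\|_{X^4_{s,b}}$, by an elementary pointwise comparison of symbols. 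Transferring the output weight $\la\xi\ra m_j(\xi)$ back to the input weights on the convolution hyperplane $\xi=\xi_1+\xi_2$ leaves an extra multiplier, and it suffices to check
\[
\frac{\la\xi\ra^{1-s}m_1(\xi)}{\la\xi_1\ra^{1-s}m_2(\xi_1)\,\la\xi_2\ra^{1-s}m_2(\xi_2)}\ls1
\quad\text{and}\quad
\frac{\la\xi\ra^{1-s}m_2(\xi)}{\la\xi_1\ra^{1-s}m_1(\xi_1)\,\la\xi_2\ra^{1-s}m_2(\xi_2)}\ls1,
\]
the first for the $vv_x$ term, the second for the $(uv)_x$ term. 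These hold because, by (\ref{m1}) and (\ref{m2_fix}), the quantity $\la\xi\ra^{1-s}m_j(\xi)$ is comparable to $\min\{\la\xi\ra^{1-s},N^{1-s}\}$ (up to the harmless bounded factor $2^{s-1}$ distinguishing $m_1$ from $m_2$), which is nondecreasing in $|\xi|$, so that $\la\xi\ra\ls\la\xi_1\ra\la\xi_2\ra$ forces each ratio to be $O(1)$; one verifies this in the low-low, high-low, and high-high frequency regimes separately. Crucially, inserting $m_1,m_2$ does not touch the modulation weights $\la\tau-\xi^3\ra$ and $\la\tau-4\xi^3\ra$, so the delicate resonance analysis of \cite{YZ22a} (which lives entirely in the modulation variables) is preserved verbatim, and only this elementary symbol comparison is new.

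With the two bilinear estimates in hand, I would close the argument by the standard Bourgain-space machinery. The linear estimate $\|\e^{-\a t\p_x^3}\phi\|_{X^\a_{1,b}(\m{R}\times[0,1])}\ls\|\phi\|_{H^1}$ and the Duhamel estimate $\big\|\int_0^t\e^{-\a(t-t')\p_x^3}F\,dt'\big\|_{X^\a_{1,b}(\m{R}\times[0,1])}\ls\|F\|_{X^\a_{1,b-1}(\m{R}\times[0,1])}$, both valid for $\tfrac12<b\leq1$ (this is where the hypothesis on $b$ enters), present the fixed-point map $\Phi(U,V)$ as a quadratic perturbation of the free evolution. On the ball of radius $R=CF_0$ in $X^1_{1,b}\times X^4_{1,b}$ over $[0,1]$, the bilinear estimates yield $\|\Phi(U,V)\|\ls F_0+R^2$ together with a matching Lipschitz bound $\ls R\,\|(U,V)-(U',V')\|$; choosing $\eps_*$ so small that $CF_0\leq\tfrac12$ makes $\Phi$ a contraction, giving a unique fixed point of norm $\ls F_0$, which is precisely (\ref{ub_ls}), and whose construction on the whole interval $[0,1]$ yields lifespan at least $1$.

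The main obstacle is the symbol comparison underlying the $I$-modified bilinear estimates: one must confirm that inserting the two \emph{distinct} operators $I_1,I_2$, with their different scalings (the factor $\rho_*=2$) attached to the two different dispersion relations, does not destroy the bilinear estimates of \cite{YZ22a}. The reason it does not is exactly that $m_1,m_2$ are bounded, even, slowly varying, and that $\la\xi\ra^{1-s}m_j(\xi)$ is monotone and controlled by $\min\{\la\xi\ra^{1-s},N^{1-s}\}$; the remaining work (the case analysis across frequency regimes and absorbing the bounded $2^{s-1}$ discrepancy between $m_1$ and $m_2$) is routine. Everything else, namely the contraction, the lifespan, and the identification with the \cite{YZ22a} solution, is standard.
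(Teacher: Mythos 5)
Your proposal is correct and follows essentially the same route as the paper: a contraction mapping for $(I_1u, I_2v)$ in $X^{1}_{1,b}\times X^{4}_{1,b}$ over $[0,1]$, reduced to exactly the two $I$-modified bilinear estimates of the paper's Lemma \ref{Lemma, bl_est_I}, which the paper likewise obtains by the pointwise comparison $\la\xi_3\ra^{1-s}m_2(\xi_3)\ls \la\xi_1\ra^{1-s}m_1(\xi_1)\la\xi_2\ra^{1-s}m_2(\xi_2)$ before handling the remaining unweighted trilinear form. The only (inessential) difference is that the paper carries out that final unweighted estimate explicitly in the Appendix via the duality/case analysis of \cite{KPV96, YZ22a}, whereas you import it from \cite{YZ22a}; this is legitimate precisely because, as you note, those estimates are proved there in majorant form (with absolute values on the Fourier transforms), so the pointwise symbol bound suffices to transfer them.
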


\begin{proof}
Let $\psi \in C^{\infty}(\m{R})$ be a cut-off function such that $\psi = 1$ on $[-1,1]$ and $\text{supp } \psi \subseteq [-2,2]$. For $\a = 1$ or $4$, we denote $S_{\a}$ to be the semigroup operator that maps any initial data $w_0\in H^{s}(\m{R})$ to be the solution $w\in C(\m{R}; H^s (\m{R}))$ of the following KdV equation (\ref{lin_eq}), that is $w(x,t) = [S_{\a}(t) w_0](x)$ or simply $w(t) = S_{\a}(t) w_0$.
\be\label{lin_eq}
\left\{\begin{array}{ll}
	w_{t} + \alpha w_{xxx} + w_{x} = 0, & x\in\m{R},\, t\in\m{R},\\
	w(x,0)=w_{0}(x).
\end{array}\right.
\ee	
Then the solution of the Cauchy problem (\ref{MB-4}) for $t\in[0,1]$ can be regarded as the restriction of the solution of the following integral equations to the time interval $[0,1]$.
\be\label{Duh_soln}\left\{ 
\begin{aligned} 
	u(t) &= \psi(t) \Big( S_{1}(t) u_{0} - \int_{0}^{t} S_{1}(t - \tau) (v v_x)(\tau) \,d\tau \Big),\\ 
	v(t) &= \psi(t) \Big(S_{4}(t) v_{0} - \int_{0}^{t} S_{4}(t - \tau) (uv)_{x}(\tau) \,d\tau\Big).
\end{aligned}
\right.\ee
By applying the $I$-operators $I_1$ and $I_2$ to the above two equations respectively, it reduces to solve 
\be\label{integ form}\left\{ 
\begin{aligned} 
	I_1 u(t) &= \psi(t) \Big( S_{1}(t) I_1 u_{0} - \int_{0}^{t} S_{1}(t - \tau) I_1(v v_x)(\tau) \,d\tau \Big),\\ 
	I_2 v(t) &= \psi(t) \Big(S_{4}(t) I_2 v_{0} - \int_{0}^{t} S_{4}(t - \tau) I_2[(uv)_{x}](\tau) \,d\tau\Big).
\end{aligned}
\right.\ee
Define 
\[f(t) = I_1 u(t), \quad f_0 = I_1 u_0, \quad g(t) = I_2 v(t), \quad g_0 = I_2 v_0.\]
Since $m_1$ and $m_2$ are bounded and positive functions, $I_1$ and $I_2$ have inverses which are denoted as $I_1^{-1}$ and $I_2^{-1}$. Hence, (\ref{integ form}) is equivalent to 
\be\label{integ form_I}\left\{ 
\begin{aligned} 
	f(t) &= \psi(t) \Big( S_{1}(t) f_{0} - \int_{0}^{t} S_{1}(t - \tau) I_1(v v_x)(\tau) \,d\tau \Big),\\ 
	g(t) &= \psi(t) \Big(S_{4}(t) g_{0} - \int_{0}^{t} S_{4}(t - \tau) I_2[(uv)_{x}](\tau) \,d\tau\Big),
\end{aligned}
\right.\ee
where $u = I_{1}^{-1} f$ and $v = I_2^{-1} g$.

This suggests to consider the map $\Phi(f,g)\triangleq \big(\Phi_{1}(f,g), \Phi_{2}(f,g)\big)$, where 
\be\label{contra map}
\left\{\begin{aligned}
	\Phi_{1}(f,g) &:= \psi(t) \Big( S_{1}(t) f_{0} - \int_{0}^{t} S_{1}(t - \tau) I_1(v v_x)(\tau) \,d\tau \Big),\\
	\Phi_{2}(f,g) &:= \psi(t) \Big(S_{4}(t) g_{0} - \int_{0}^{t} S_{4}(t - \tau) I_2[(uv)_{x}](\tau) \,d\tau\Big),
\end{aligned}\right. \ee
and then show $\Phi $ is a contraction mapping in a ball in an appropriate Banach space, which will imply that the fixed point of $\Phi$ is the desired solution to the Cauchy problem (\ref{MB-4}) for $0\leq t\leq 1$.
Let $Y_{b} := X^1_{1,b}\times X^4_{1,b} $ be equipped with the norm  
$\|(f, g)\|_{Y_{b}} := \| f \|_{X^{1}_{1,b}} + \| g \|_{X^{4}_{1,b}}$.
Define 
\be\label{soln space}
\mcal{B}_{C}(f_0, g_0) = \{(f,g)\in Y_{b}: \|(f,g)\|_{Y_{b}} \leq C F_{0} \},
\ee
where $F_0 =  \|(I_1 u_0, I_2 v_0)\|_{\mathcal{H}^{1}(\m{R})} := \|f_0\|_{H^1} + \|g_0\|_{H^1}$.
In the following, we will choose suitable $\eps_{*}$ in (\ref{initial_small}) and a constant $C$  in (\ref{soln space}) such that $\Phi$ is a contraction mapping on $\mcal{B}_{C}(f_0, g_0)$. 

We will first show that $\Phi$ maps  the closed ball $\mcal{B}_{C} (f_0, g_0)$ into itself. For any  $(f,g)\in \mcal{B}_{C} (f_0, g_0)$,  by standard estimates in the Fourier restriction spaces (see e.g. Lemma 3.1 and Lemma 3.3 in \cite{KPV93Duke}), we obtain
\be\label{est on Phi}\left\{\begin{aligned}
	\|\Phi_{1}(f,g)\|_{X^{1}_{1,b}} & \leq C_1 \|f_0\|_{H^{1}} + C_1 \| I_1(v v_x)\|_{X^{1}_{1,b-1}},\\
	\|\Phi_{2}(f,g)\|_{X^{4}_{1,b}} & \leq C_1 \|g_0\|_{H^{1}} + C_1 \|I_{2}[(uv)_{x}]\|_{X^{4}_{1,b-1}}.
\end{aligned}\right.\ee
Now by taking advantage of the bilinear estimates which involve the $I$-operators in Lemma \ref{Lemma, bl_est_I} (with $\sigma_1 = 0$), we find
\be\label{nonlin est}\left\{\begin{aligned}
& \| I_1(v v_x) \|_{X^{1}_{1, b-1}} \leq C_{2} \|I_2 v\|_{X^{4}_{1,b}}^{2} = C_{2} \| g \|_{X^{4}_{1,b}}^{2} \\
& \| I_{2}[(uv)_{x}] \|_{X^{4}_{1, b-1}} \leq C_{2} \|I_1 u\|_{X^{1}_{1,b}} \|I_2 v\|_{X^{1}_{1,b}} = C_{2} \| f \|_{X^{1}_{1,b}} \| g \|_{X^{4}_{1,b}}.
\end{aligned}\right.\ee
Plugging (\ref{nonlin est}) into (\ref{est on Phi}) yields
\be\label{apriori bdd for Phi}\begin{split}
	\|\Phi(f,g)\|_{Y_b} &\leq C_{3} (\|f_0\|_{H^1} + \|g_0\|_{H^1}) + C_3 \big(\| f \|_{X^{1}_{1,b}} + \| g \|_{X^{4}_{1,b}} \big)^2, \\
	& = C_{3} F_0 + C_3 \| (f,g) \|_{Y_b}^2.
\end{split}\ee
Define $C^{*} = 8 C_{3}$. Then it follows from (\ref{soln space}) and (\ref{apriori bdd for Phi}) that for any  $(f,g)\in \mcal{B}_{C^*} (f_0, g_0)$, we have $\|(f,g)\|_{Y_{b}} \leq C^{*} F_{0}$ and 
\[
	\|\Phi(f, g)\|_{Y_b} \leq C_{3} F_{0} + C_{3} (C^{*})^{2} F_{0}^{2} = C_{3} F_{0} + 64 C_{3}^{3} F_0^2.
\]
Choosing 
\be\label{choice of eps}
	\eps_{*} = \frac{1}{64 C_3^2},
\ee
then whenever $F_0$ satisfies (\ref{initial_small}), we have 
\[
	\|\Phi(f,g)\|_{Y_b} \leq 2C_{3} F_{0}= \frac{C^{*}F_0}{4},
\]
which justifies $\Phi(f,g)\in \mcal{B}_{C^*} (f_0, g_0)$. 

Next for any $(f_j, g_j)\in \mcal{B}_{C^*} (f_0, g_0)$, $j=1,2$, the same argument yields 
\[ \| \Phi (f_1, g_1 ) - \Phi (f_2 , g_2)\|_{Y_b} \leq \frac12 \| (f_1, g_1) - (f_2, g_2)\|_{Y_b}.\]
We  have thus shown  that $\Phi$ is a contraction on $\mcal{B}_{C^*} (f_0, g_0)$ and its fixed point $(f,g)$ solves the equation (\ref{integ form_I}). As a result, $(u,v) := (I_1^{-1}f, I_2^{-1}g)$ solves the equation (\ref{integ form}) as well as (\ref{Duh_soln}). Finally, since the function $\psi$ in (\ref{Duh_soln}) is identically equal to $1$ on $[0,1]$, the restriction of $(u, v)$ to $\m{R}\times [0,1]$ solves the original system (\ref{MB-4}) for $t\in[0,1]$. Finally, it follows from the definition of (\ref{soln space}) that
\[
	\|I_1 u\|_{X^{1}_{1,b} (\m{R} \times [0, 1])} + \|I_2 v\|_{X^{4}_{1,b} (\m{R} \times [0, 1])} 
	\leq \| f \|_{X^{1}_{1,b} (\m{R}^2 )} + \| g \|_{X^{4}_{1,b} (\m{R}^2)} \leq C^{*} F_0,
\]
the proof of Proposition \ref{Prop, ls} is complete.
\end{proof}

Proposition \ref{Prop, ls} requires the initial data to be small as shown in (\ref{initial_small}), this restriction can be removed by a standard scaling argument and it turns out that the lifespan of the local solution is inverse proportional to a superlinear power of the size of the initial data, see the next result for more precise description.

\begin{proposition}\label{Prop, ls1}
	Let $\frac34\leq s < 1$, $\frac12 < b \leq 1$ and $(u_0, v_0) \in \mathcal{H}^{s}(\m{R})$. Denote \[F_{0} = \| (I_1 u_0, I_2 v_0) \|_{\mathcal{H}^{1}(\m{R})}.\]
	Then the lifespan of the local solution $(u,v)$ has a lower bound 
	$T_{*} := \delta F_{0}^{-6/(2s+1)}$, 
	where $\delta$ is a positive constant only depending on $s$ and $b$. Meanwhile, 
	\be\label{ub_ls1}
	\|I_1 u\|_{X^{1}_{1,b} (\m{R} \times [0,T_{*}])} + \|I_2 v\|_{X^{4}_{1,b} (\m{R} \times [0,T_{*}])} \leq 
	C F_{0}^{\frac{6b+2}{2s+1}},
	\ee
	where $C$ is a positive constant only depending on $s$ and $b$.
\end{proposition}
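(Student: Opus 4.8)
The plan is to remove the smallness hypothesis in Proposition \ref{Prop, ls} by a dilation argument. The system (\ref{MB-4}) is invariant under the scaling $(u,v)\mapsto(u_\lambda,v_\lambda)$,
\[
u_\lambda(x,t)=\lambda^2 u(\lambda x,\lambda^3 t),\qquad v_\lambda(x,t)=\lambda^2 v(\lambda x,\lambda^3 t),\qquad 0<\lambda\le1,
\]
where the common prefactor $\lambda^2$ and the time scaling $t\mapsto\lambda^3 t$ are the unique choice making both quadratic nonlinearities scale like the dispersive terms. Thus a solution of (\ref{MB-4}) with rescaled data $(u_{0,\lambda},v_{0,\lambda})=(\lambda^2 u_0(\lambda\cdot),\lambda^2 v_0(\lambda\cdot))$ existing on $[0,1]$ corresponds to a solution with the original data existing on $[0,\lambda^3]$. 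So the first move is to fix $\lambda$ small enough that the rescaled data is admissible for Proposition \ref{Prop, ls}, i.e. $F_{0,\lambda}:=\|(I_1 u_{0,\lambda},I_2 v_{0,\lambda})\|_{\mathcal{H}^{1}(\m{R})}\le\eps_*$, and then to transfer the unit–time conclusion back to the original solution.

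The key step is a scaling estimate for the $I$-norm,
\[
F_{0,\lambda}\lesssim\lambda^{\,s+\frac12}\,F_0,\qquad 0<\lambda\le1 .
\]
This does not follow by commuting $I_j$ with the dilation, since the symbols $m_1,m_2$ carry the fixed cutoff $N$. I would prove it by Plancherel together with the change of variables $\eta=\xi/\lambda$; for the first component,
\[
\|I_1 u_{0,\lambda}\|_{H^1}^2=\lambda^3\int_{\m{R}}\la\lambda\eta\ra^2 m_1^2(\lambda\eta)\,|\wh{u_0}(\eta)|^2\,d\eta,
\]
and then bounding the weight $\la\lambda\eta\ra^2 m_1^2(\lambda\eta)$ against $\la\eta\ra^2 m_1^2(\eta)$ by splitting $\eta$ according to whether $\lambda|\eta|$ and $|\eta|$ fall below or above $N$, using the monotonicity and the growth $\la\xi\ra m_1(\xi)\lesssim N^{1-s}\la\xi\ra^{s}$ of the $I$-symbol, and the analogous bound for $m_2(\xi)=m_1(2\xi)$. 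The main technical care is to keep every constant independent of $N$, so that the final statement depends only on $s$ and $b$. Granting this, choosing $\lambda$ by $\lambda^{\,s+\frac12}F_0\sim\eps_*$, that is
\[
\lambda\sim\big(\eps_*/F_0\big)^{\frac{2}{2s+1}}\sim F_0^{-\frac{2}{2s+1}},
\]
yields $F_{0,\lambda}\le\eps_*$, so Proposition \ref{Prop, ls} applies to $(u_{0,\lambda},v_{0,\lambda})$ and gives existence on $[0,1]$ with $\|I_1 u_\lambda\|_{X^1_{1,b}(\m{R}\times[0,1])}+\|I_2 v_\lambda\|_{X^4_{1,b}(\m{R}\times[0,1])}\lesssim F_{0,\lambda}$. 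Undoing the scaling gives existence of the original solution on $[0,\lambda^3]$, whence the lifespan lower bound $T_*\sim\lambda^3\sim\delta F_0^{-6/(2s+1)}$.

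It remains to deduce (\ref{ub_ls1}). For this I would record the dilation law for the Fourier–restriction norms: from $\wh{u_\lambda}(\xi,\tau)=\lambda^{-2}\wh{u}(\xi/\lambda,\tau/\lambda^3)$ and the substitution $(\eta,\mu)=(\xi/\lambda,\tau/\lambda^3)$, the $X^1_{1,b}$ weight $\la\xi\ra\,\la\tau-\xi^3\ra^{b}m_1(\xi)$ transforms, on the dominating frequencies, into $\lambda^{\,s+3b}$ times the $X^1_{s,b}$ weight $\la\eta\ra^{s}\la\mu-\eta^3\ra^{b}m_1(\eta)$ — the $\lambda^{3b}$ coming from the modulation $\la\tau-\xi^3\ra^{b}$ under $t\mapsto\lambda^3 t$, and the remaining power from the spatial symbol together with the passage from regularity $1$ to $s$. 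Collecting these powers gives, schematically,
\[
\|I_1 u\|_{X^1_{s,b}(\m{R}\times[0,T_*])}+\|I_2 v\|_{X^4_{s,b}(\m{R}\times[0,T_*])}\lesssim\lambda^{-(s+3b)}F_{0,\lambda}\lesssim\lambda^{-(s+3b)}\lambda^{\,s+\frac12}F_0=\lambda^{\frac12-3b}F_0 ,
\]
and substituting $\lambda\sim F_0^{-2/(2s+1)}$ produces exactly the exponent $\tfrac{6b+2s}{2s+1}$. I expect the main obstacle to be precisely the scaling estimate for $F_{0,\lambda}$ and, in the same spirit, the dilation bookkeeping for the $X^\alpha_{s,b}$ norms: in both one must push the fixed-$N$ cutoff symbols $m_1,m_2$ through the dilation, which forces a case analysis in frequency and demands that all implicit constants stay uniform in $N$, while the change of the modulation index under $t\mapsto\lambda^3 t$ must be handled on the time-restricted spaces. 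Everything else is the routine collection of powers of $\lambda$.
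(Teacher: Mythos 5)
Your proposal is correct and follows essentially the same route as the paper's own proof: the same parabolic-type rescaling (yours with $\lambda\le 1$ is the paper's with $\lambda\ge 1$ inverted), the same key scaling estimate $F_{0,\lambda}\lesssim\lambda^{s+\frac12}F_0$ obtained by pushing the fixed-$N$ multipliers $m_1,m_2$ through the dilation, the same choice $\lambda\sim(\eps_*/F_0)^{2/(2s+1)}$ to invoke Proposition \ref{Prop, ls}, and the same bookkeeping factor $\lambda^{-(s+3b)}$ for the Bourgain norms, yielding the identical exponent $\frac{6b+2s}{2s+1}$. The only difference is cosmetic: you sketch the multiplier comparison via Plancherel and a frequency case analysis, whereas the paper encodes it in the one-line inequalities $m_j(\lambda^{-1}\xi)\le\lambda^{1-s}m_j(\xi)$ and $m_j(\lambda^{-1}\xi)\ge m_j(\xi)$.
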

\begin{proof}
	The local well-posedness for (\ref{MB-4}) has been treated in Proposition \ref{Prop, ls} for small initial data, so we will first scale $(u,v)$ so that its initial data $(u_0,v_0)$ becomes small. More precisely, for any $\lam\geq 1$, we define $(u^{\lam}, v^{\lam})$ as 
	\be\label{scaled_soln}
		(u^{\lam}, v^{\lam})(x,t) := \lam^{-2} (u,v)(\lam^{-1}x, \lam^{-3} t).
	\ee
	Then $(u,v)$ solves (\ref{MB-4}) with initial data $(u_0, v_0)$ if and only if $(u^{\lam}, v^{\lam})$ solves the same equation with a scaled initial data, that is,
	\be\label{MB-4-lam}\left\{
	\begin{array}{rcl}
		u^{\lam}_{t} + u^{\lam}_{xxx} & = & - v^{\lam} v^{\lam}_x,\\
		v^{\lam}_{t} + 4 v^{\lam}_{xxx} & = & - (u^{\lam}v^{\lam})_{x},\\
		(u^{\lam}, v^{\lam})|_{t=0} & = & (u^{\lam}_0,v^{\lam}_0),
	\end{array} \right. \ee
	where $(u^{\lam}_0, v^{\lam}_0)(x) := \lam^{-2} (u_0, v_0)(\lam^{-1}x)$. Next, we compare $\| (I_1 u^{\lam}_{0}, I_2 v^{\lam}_{0}) \|_{\mathcal{H}^{1}(\m{R})} $ with $\| (I_1 u_0,  I_2 v_0) \|_{\mathcal{H}^{1}(\m{R})}$. Firstly, $\wh{I_1 u_0^{\lam}}(\xi) = \lam^{-1} m_1(\xi) \wh{u_0}(\lam\xi)$, so 
	\[
		\| I_1 u^{\lam}_{0} \|_{H^{1}(\m{R})}^2 =  \lam^{-2} \int_{\m{R}} (1+|\xi|)^2 m_1^2(\xi) |\wh{u_0}(\lam\xi)|^2 \,d\xi.
	\]
	By the change of variable $\eta := \lam\xi$, we obtain 
	\[
		\| I_1 u^{\lam}_{0} \|_{H^{1}(\m{R})}^2 =  \lam^{-3} \int_{\m{R}} (1 + \lam^{-1}|\eta|)^2 m_1^2(\lam^{-1}\eta) |\wh{u_0}(\eta)|^2 \,d\eta.
	\]
	Since $\lam\geq 1$ and $s\in[\frac34, 1)$, then $(1 + \lam^{-1}|\eta|)^2 \leq (1 + |\eta|)^2$ and $m_1(\lam^{-1}\eta) \leq (2\lam)^{1-s} m_1(\eta)$ due to Lemma \ref{Lemma, m1p2}. Consequently, 
	\[
		\| I_1 u^{\lam}_{0} \|_{H^{1}(\m{R})}^2 \leq \lam^{-3}  (2\lam)^{2-2s} \int_{\m{R}} (1 + |\eta|)^2 m_1^2(\eta) |\wh{u_0}(\eta)|^2 \,d\eta = 2^{2-2s} \lam^{-1-2s}\| I_1 u_{0} \|_{H^{1}(\m{R})}^2.
	\] 
	Similarly, we can establish a similar relation between $\| I_1 v^{\lam}_{0} \|_{H^{1}(\m{R})}$ and $\| I_1 v_{0} \|_{H^{1}(\m{R})}$. Thus, 
	\be \label{scaled_norm}
	\| (I_1 u^{\lam}_{0}, I_2 v^{\lam}_{0}) \|_{\mathcal{H}^{1}(\m{R})} \leq 2^{1-s} \lam^{-\frac12-s} \| (I_1 u_0,  I_2 v_0) \|_{\mathcal{H}^{1}(\m{R})} \leq 2\lam^{-\frac12-s} F_0, 
	\ee
	
	Thanks to (\ref{scaled_norm}), as long as the parameter $\lam$ is chosen as 
	\be\label{choice_lam}
		\lam = 2 \eps_{*}^{-\frac{2}{2s+1}} F_{0}^{\frac{2}{2s+1}},
	\ee
	where $\eps_*$ is the constant in Proposition \ref{Prop, ls},  
	then the restriction (\ref{initial_small}) holds for $(I_1 u_0^{\lam}, I_2 v_0^{\lam})$. Hence, it follows from Proposition \ref{Prop, ls} that (\ref{MB-4-lam}) has a local solution $(u^{\lam},v^{\lam})$ on the time interval $[0,1]$ such that 
	\be\label{scaled_fr_est}
	\|I_1 u^{\lam}\|_{X^{1}_{1,b} (\m{R} \times [0, 1])} + \|I_2 v^{\lam}\|_{X^{4}_{1,b} (\m{R} \times [0, 1])} \leq 
	C \| (I_1 u^{\lam}_{0}, I_2 v^{\lam}_{0}) \|_{\mathcal{H}^{1}(\m{R})}.
	\ee
	By converting $(u^{\lam}, v^{\lam})$ back to $(u,v)$, we find (\ref{MB-4}) admits a local solution $(u,v)$ on the time interval $[0, \lam^{-3}]$ satisfying $I_1 u\in X^{1}_{1,b} (\m{R} \times [0,\lam^{-3}])$ and $I_2 v\in X^{4}_{1,b} (\m{R} \times [0,\lam^{-3}])$. In addition,
	we will take advantage of (\ref{scaled_fr_est}) and (\ref{scaled_norm}) to find a relation between $\|I_1 u\|_{X^{1}_{1,b}} + \|I_2 v\|_{X^{4}_{1,b}} $ and $F_0$. Firstly, based on (\ref{scaled_soln}) and the space-time Fourier transform, we know $\wh{I_1 u^{\lam}}(\xi,\tau) = \lam^2 m_1(\xi) \wh{u}(\lam\xi, \lam^3\tau)$. So 
	\[
		 \|I_1 u^{\lam}\|_{X^{1}_{1,b}}^2 = \lam^4 \iint_{\m{R}^2} (1+|\xi|)^2 (1+|\tau-\xi^3|)^{2b} m_1^2(\xi) |\wh{u}(\lam\xi, \lam^3\tau)|^2 \,d\xi \,d\tau.
	\]
	By the change of variable $\eta:= \lam\xi$ and $s := \lam^3\tau$, we have 
	\[
		\|I_1 u^{\lam}\|_{X^{1}_{1,b}}^2 = \iint_{\m{R}^2} (1+ \lam^{-1}|\eta|)^2 (1 + \lam^{-3} |s-\eta^3|)^{2b} m_1^2(\lam^{-1}\eta) |\wh{u}(\eta, s)|^2 \,d\eta \,ds.
	\]
	Since $\lam\geq 1$ and $m_1(x)$ is a decreasing function in the radius $|x|$, then $m_1(\lam^{-1}\eta) \geq m_1(\eta)$. Consequently, 
	\[\begin{split}
		\|I_1 u^{\lam}\|_{X^{1}_{1,b}}^2 &\geq \lam^{-2-6b} \iint_{\m{R}^2} (1+ |\eta|)^2 (1 + |s-\eta^3|)^{2b} m_1^2(\eta) |\wh{u}(\eta, s)|^2 \,d\eta \,ds \\
		&= \lam^{-2-6b} \|I_1 u\|_{X^{1}_{1,b}}^2 .
	\end{split}\]
	Similarly, we can build an analogous estimate between $\|I_2 v^{\lam}\|_{X^{4}_{1,b}}$ and $\|I_2 v\|_{X^{4}_{1,b}}$. Hence, 	
	\be\label{scale_fr_conv}\begin{split}
		& \|I_1 u\|_{X^{1}_{1,b} (\m{R} \times [0,\lam^{-3}])} + \|I_2 v\|_{X^{4}_{1,b} (\m{R} \times [0,\lam^{-3}])} \\
		\leq\;\; & \lam^{1+3b} \big( \|I_1 u^{\lam}\|_{X^{1}_{1,b} (\m{R} \times [0,1])} + \|I_2 v^{\lam}\|_{X^{4}_{1,b} (\m{R} \times [0,1])} \big).
	\end{split}\ee
	Combining (\ref{scale_fr_conv}) with (\ref{scaled_fr_est}) and (\ref{scaled_norm}) yields
	\[
	\|I_1 u\|_{X^{1}_{1,b} (\m{R} \times [0,\lam^{-3}])} + \|I_2 v\|_{ X^{4}_{1,b} (\m{R} \times [0,\lam^{-3}])} \leq C \lam^{3b + \frac12 - s} F_0.
	\]
	Due to the choice (\ref{choice_lam}) for $\lam$, we find 
	\[
	\|I_1 u\|_{X^{1}_{1,b} (\m{R} \times [0,\lam^{-3}])} + \|I_2 v\|_{X^{4}_{1,b} (\m{R} \times [0,\lam^{-3}])} \leq C F_{0}^{\frac{6b + 2}{2s+1}}.
	\]
	Finally, the lifespan of $(u,v)$ has the lower bound
	$T_{*} := \lam^{-3} = \delta F_{0}^{-\frac{6}{2s+1}}$, where $\delta$ is a constant that only depends on $s$ and $b$. 
\end{proof}

The most common application of Proposition \ref{Prop, ls1} is when the size $F_0$ of the initial data $(I_1 u_0, I_2v_0)$ is smaller than $1$. Under this assumption, the lifespan can be extended by at least a constant size $\delta$ and the upper bound $F_{0}^{\frac{ 6b+2}{2s+1}}$ in (\ref{ub_ls1}) is simply dominated by $F_{0}$ since $\frac{6b+2}{2s+1} > 1$ and $F_0\leq 1$. Thus, we obtain the following corollary.

\begin{corollary}\label{Cor, ls1}
	Let $\frac34\leq s < 1$ and $\frac12 < b \leq 1$. If $F_0 := \| (I_1 u_0, I_2 v_0) \|_{\mathcal{H}^{1}(\m{R})} \leq 1$, then the lifespan of the local solution $(u,v)$ has a lower bound $\delta$ which is a positive constant only depending on $s$ and $b$. Meanwhile, 
	\[
	\|I_1 u\|_{X^{1}_{1,b} (\m{R} \times [0, \delta])} + \|I_2 v\|_{ X^{4}_{1,b}(\m{R} \times [0,\delta])} \leq C F_{0},
	\]
	where $C$ is a positive constant only depending on $s$ and $b$.
\end{corollary}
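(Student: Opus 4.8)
The plan is to obtain the corollary as a direct specialization of Proposition \ref{Prop, ls1} to the regime $F_0 \leq 1$, using two elementary monotonicity facts. First I would invoke Proposition \ref{Prop, ls1} itself, whose hypotheses $\frac34 \leq s < 1$, $\frac12 < b \leq 1$, and $(u_0,v_0) \in \mathcal{H}^{s}(\m{R})$ coincide with those assumed here. This immediately furnishes a local solution $(u,v)$ with lifespan at least $T_{*} = \delta F_{0}^{-6/(2s+1)}$, together with the bound (\ref{ub_ls1}) on the interval $[0,T_{*}]$.

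For the lifespan claim, the key observation is that the exponent $-6/(2s+1)$ is negative, so the function $F_0 \mapsto F_0^{-6/(2s+1)}$ is decreasing on $(0,1]$. Since $F_0 \leq 1$, this yields $F_0^{-6/(2s+1)} \geq 1$ and hence $T_{*} \geq \delta$. In particular $[0,\delta] \subseteq [0,T_{*}]$, so the solution produced by Proposition \ref{Prop, ls1} is in fact defined on $[0,\delta]$, which establishes the stated lower bound on the lifespan with the same constant $\delta$ (depending only on $s$ and $b$).

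For the norm estimate, I would restrict the bound (\ref{ub_ls1}) from $[0,T_{*}]$ to the smaller interval $[0,\delta]$. Because the norm in $X^{\a}_{s,b}(\m{R}\times[0,T])$ is a quotient (restriction) norm, shrinking the time interval enlarges the admissible set of extensions and therefore can only decrease the norm; thus the left-hand side of (\ref{ub_ls1}) on $[0,\delta]$ is dominated by the same quantity on $[0,T_{*}]$. It then remains to replace the right-hand side $C F_{0}^{(6b+2s)/(2s+1)}$ by $C F_{0}$. This is where the hypothesis $F_0 \leq 1$ is used a second time: the exponent satisfies $\frac{6b+2s}{2s+1} > 1$, which reduces to $6b > 1$ and hence holds since $b > \frac12$; combined with $F_0 \leq 1$, raising to a power larger than one gives $F_{0}^{(6b+2s)/(2s+1)} \leq F_{0}$, and the constant $C$ is unchanged and still depends only on $s$ and $b$.

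There is no genuine obstacle here, since the corollary is a direct consequence of Proposition \ref{Prop, ls1}. The only points requiring care are the two sign/exponent verifications — the negativity of $-6/(2s+1)$ for the lifespan and the inequality $(6b+2s)/(2s+1) > 1$ for the norm — both of which follow at once from the standing assumptions $\frac34 \leq s < 1$ and $\frac12 < b \leq 1$.
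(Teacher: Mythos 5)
Your proposal is correct and follows essentially the same route as the paper, which likewise obtains the corollary directly from Proposition \ref{Prop, ls1} by noting that $F_0\leq 1$ forces $T_*=\delta F_0^{-6/(2s+1)}\geq\delta$ and that $F_0^{(6b+2s)/(2s+1)}\leq F_0$ since $\frac{6b+2s}{2s+1}>1$. Your additional remarks (the reduction of the exponent inequality to $6b>1$ and the monotonicity of the restriction norm under shrinking the time interval) are accurate and merely make explicit what the paper leaves implicit.
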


\subsection{Key growth estimate of the second modified energy}
Based on Proposition \ref{Prop, ls1}, once the data $(u,v)(\cdot, T)\in \mathcal{H}^{s}(\m{R})$ \big(equivalently $(I_1 u, I_2 v)(\cdot, T)\in \mathcal{H}^{1}(\m{R})$\big) at some point $T$, then by regarding $(u,v)(\cdot, T)$ as an initial data, the local solution can be extended by an amount of time $T_{*}$ such that $I_1 u\in X^{1}_{1,b} (\m{R} \times [T, T+T_{*}])$ and $I_2 v\in X^{4}_{1,b} (\m{R} \times [T, T+T_{*}])$. In order to extend this solution further to an arbitrary long time, it is crucial to control the growth of energies. In the next result, we will establish two quadri-linear estimates $\Lambda_4 (M_{41}; v)$ and $\Lambda_4 (M_{42}; u, u, v, v)$ in space-time domain such that the growth of $E^{(2)}(u,v)$ can be controlled due to formula (\ref{E2_td}).

\begin{lemma}\label{Lemma, est_M4op}
Let $\frac{3}{4} \leq s < 1$, $\frac{1}{2} < b \leq 1$, and $\beta < 1$. If there exists a time interval $[T, T+\delta]$ such that $I_1 u \in X^{1}_{1,b} (\m{R} \times [T, T+\delta])$ and $I_2 v \in X^{4}_{1,b} (\m{R} \times [T, T+\delta])$, then 
\begin{align}
	\left| \int_{T}^{T+\delta} \Lambda_4(M_{41}; v) \dd t \right| &\leq C \delta N^{-\beta} \| I_2 v \|_{X^{4}_{1,b} (\m{R} \times [T, T+\delta])}^4, \label{est_M41} \\
	\left| \int_{T}^{T+\delta} \Lambda_4(M_{42}; u, u, v, v) \dd t \right| &\leq C \delta N^{-\beta} \| I_1 u \|_{X^{1}_{1,b} (\m{R} \times [T, T+\delta])}^2 \| I_2 v \|_{X^{4}_{1,b} (\m{R} \times [T, T+\delta])}^2, \label{est_M42}
\end{align}
where $C$ is a positive constant only depending on $s$, $b$ and $\b$.
\end{lemma}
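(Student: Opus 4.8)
The plan is to pass to the $I$-adapted unknowns $f=I_1 u$ and $g=I_2 v$, reduce both estimates to a pointwise bound on an effective multiplier combined with the $L^4_{x,t}$ Strichartz inequality for the Airy group, and read off the decay in $N$ from Lemma~\ref{Lemma, est on M4}. First I would dispose of two routine reductions: since $\|\cdot\|_{X^{\a}_{1,b}}$ is nondecreasing in $b$, it suffices to treat $b$ close to $\tfrac12$; and the time integral over $[T,T+\delta]$ is handled by inserting a smooth cutoff $\psi_\delta(t)$ and using that multiplication by such a cutoff is bounded on $X^{\a}_{1,b}$ for $\tfrac12<b\le1$. After extending $f,g$ to all of $\m{R}^2$ I may then work with the global space-time Fourier representation of $\Lam_4$ on the hyperplane $\{\sum_j\xi_j=\sum_j\tau_j=0\}$, the modulation weights $\la\tau_j-\a\xi_j^3\ra^{b}$ being kept in reserve for the Strichartz step.

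The decisive preliminary step is a symmetry reduction. Because $\Lam_4(M_{41};v)$ pairs four identical factors and $\Lam_4(M_{42};u,u,v,v)$ is symmetric in its two $u$-slots and in its two $v$-slots, I may replace $M_{41}$ and $M_{42}$ by their symmetrizations over the corresponding permutations. On the region where every frequency is $\ls N$ one has $\sigma_3\equiv1$ by (\ref{sigma_3_cpt}), so there $M_{41}=\tfrac12(\xi_1+\xi_4)$ and $M_{42}=2(\xi_2+\xi_3)$; averaging these over the admissible permutations produces a multiple of $\xi_1+\xi_2+\xi_3+\xi_4=0$, so the symmetrized multipliers vanish identically on that region. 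Consequently both quadri-linear forms are effectively supported on $\{N_{max}\gs N\}$, and this is exactly where the gain in $N$ originates.

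On $\{N_{max}\gs N\}$ I would write $\wh v=\wh g/m_2$ (resp. $\wh u=\wh f/m_1$) and factor out the regularity weights, reducing matters to the boundedness of the effective multiplier
\[
A:=\frac{M_{41}(\xi_1,\dots,\xi_4)}{\prod_{j=1}^{4} m_2(\xi_j)\,\la\xi_j\ra}
\qquad\Big(\text{resp. } \frac{M_{42}}{m_1(\xi_1)m_1(\xi_2)m_2(\xi_3)m_2(\xi_4)\,\prod_j\la\xi_j\ra}\Big).
\]
Using $\tfrac{1}{m_2(\xi)\la\xi\ra}\ls N^{s-1}|\xi|^{-s}$ for $|\xi|\gs N$ and $\ls\la\xi\ra^{-1}\le1$ for $|\xi|\ls N$, together with the pointwise bound $|M_{41}|+|M_{42}|\ls N_{max}^{2s-1}N^{2-2s}$ from Lemma~\ref{Lemma, est on M4} and the fact that on $\Gamma_4$ the two largest frequencies are both $\sim N_{max}\gs N$, the powers balance exactly (this is where the scaling $\rho_*=2$ of (\ref{m2_fix}) is used) to yield $\|A\|_{L^\infty}\ls N_{max}^{-1}\le CN^{-1}$. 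Pulling $A$ out in $L^\infty$ and applying Hölder to the four remaining space-time factors, each controlled in $L^4_{x,t}$ by the Airy Strichartz estimate $\|h\|_{L^4_{x,t}}\ls\|h\|_{X^{\a}_{0,b}}$ (valid for $b>\tfrac13$ and any $\a\neq0$), bounds the form by $CN^{-1}\|I_2 v\|_{X^{4}_{1,b}}^4$ (resp. $CN^{-1}\|I_1 u\|_{X^{1}_{1,b}}^2\|I_2 v\|_{X^{4}_{1,b}}^2$). The stated $N^{-\b}$ with $\b<1$ then follows, the harmless $N^{\eps}$ loss arising from the endpoint Strichartz exponent and the dyadic summation in $N_{max}$.

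The main obstacle is the symmetry cancellation of the second paragraph: without it the forms carry no decay in $N$ whatsoever, since on the low-frequency region $A$ is merely $O(1)$ and not small. Making rigorous that the symmetrized multipliers vanish on $\{N_{max}\ls N\}$ — and, relatedly, that the weight count of the third paragraph closes with the full power $N^{-1}$ rather than something weaker — is the crux, and is precisely what pins down $\rho_*=2$ through Lemma~\ref{Lemma, est on M4}. The all-$v$ form $\Lam_4(M_{41};v)$ is the tightest case, because, unlike $\Lam_4(M_{42};u,u,v,v)$, its four inputs share the single dispersion relation $\tau=4\xi^3$ and thus cannot exploit the sharper bilinear smoothing available (via Lemma~\ref{Lemma, bl_est_I}) when the coefficients $\a=1$ and $\a=4$ are mixed; fortunately the plain $L^4_{x,t}$ Strichartz bound already suffices to close it.
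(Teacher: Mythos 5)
Your proposal is correct and takes essentially the same route as the paper's proof: the permutation-symmetrization that makes the multiplier vanish on $\{\max_j|\xi_j|\ls N\}$ (so the forms live on $N_{max}\gs N$), the pointwise bound of Lemma \ref{Lemma, est on M4}, a comparison against the product of weights $\la\xi_j\ra m_i(\xi_j)$, and the $L^4_{xt}$ Strichartz embedding combined with H\"older and Plancherel. The only difference is internal to this scheme: your count uses that the \emph{two} largest frequencies on $\Gamma_4$ are both $\sim N_{max}$, which yields the clean sup bound $\ls N_{max}^{-1}\le N^{-1}$ for the effective multiplier with no dyadic decomposition at all (so the ``$N^{\eps}$ loss from dyadic summation'' you worry about never arises), whereas the paper exploits only one companion frequency $\gs N$ plus the weight on $\xi_1$ and recovers $N^{-\beta}$, $\beta<1$, via a dyadic decomposition with $\eps$-losses.
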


\begin{proof}
The proofs for (\ref{est_M41}) and (\ref{est_M42}) are very similar, so we will only verify (\ref{est_M42}) below. For ease of notations, we will denote $C$ by a constant that only depends on $s$, $b$ and $\b$, and the values of $C$ may be different from line to line. Moreover, we will simply use $X^{1}_{1,b}$ and $X^{4}_{1,b}$ to represent the spaces $X^{1}_{1,b} (\m{R} \times [T, T+\delta])$ and $X^{4}_{1,b} (\m{R} \times [T, T+\delta])$ respectively.
According to the definition of the operator $\Lam_{4}(M_{42}; u,u,v,v)$, it is equivalent to prove
\be\label{M42op_est_1}\begin{split}
	& \int_{T}^{T+\delta} \int_{\Gamma_4} M_{42}(\xi_1, \xi_2, \xi_3, \xi_4) \, \wh{u}(\xi_1, t) \wh{u}(\xi_2, t) \wh{v}(\xi_3, t) \wh{v}(\xi_4, t) \, d\sigma \, dt \\
	\leq\; & C \delta N^{-\b} \| I_1 u \|_{X^{1}_{1,b}}^2 \| I_2 v \|_{X^{4}_{1,b}}^2,
\end{split}\ee
where $\wh{u}(\xi,t) := \F_{x} u(\xi,t)$ and $\wh{v}(\xi,t) := \F_{x} v(\xi,t)$. 

In the region where $\max\limits_{1\leq j\leq 4} |\xi_j| \leq \frac{N}{4}$, it follows from (\ref{mpl_4}) that $M_{42}(\xi_1, \xi_2, \xi_3, \xi_4) = 2(\xi_2+\xi_3)$. Thus, by using symmetry, 
\[\begin{split}
	&\int_{\Gamma_4 \cap \{ \max\limits_{1\leq j\leq 4} |\xi_j| \leq \frac{N}{4} \}} 2(\xi_2+\xi_3) \, \wh{u}(\xi_1, t) \wh{u}(\xi_2, t) \wh{v}(\xi_3, t) \wh{v}(\xi_4, t) \, d\sigma \\
	=\,\, & \int_{\Gamma_4 \cap \{ \max\limits_{1\leq j\leq 4} |\xi_j| \leq \frac{N}{4} \}} [(\xi_2+\xi_3) + (\xi_1+\xi_4)] \, \wh{u}(\xi_1, t) \wh{u}(\xi_2, t) \wh{v}(\xi_3, t) \wh{v}(\xi_4, t) \, d\sigma = 0,
\end{split}\]
where the last inequality is due to $\sum\limits_{1\leq j\leq 4} \xi_j = 0$ on $\Gamma_4$.
So the integral domain $\Gamma_4$ in (\ref{M42op_est_1}) can be restricted to $\wt{\Gamma}_{4}$, where 
\[
	\wt{\Gamma}_{4} := \Big\{ (\xi_1,\xi_2,\xi_3,\xi_4)\in \Gamma_4: \max_{1\leq j\leq 4} |\xi_j| \geq \frac{N}{4} \Big\}.
\]
Consequently, it suffices to show 
\be\label{M42op_est_2}\begin{split}
	& \int_{T}^{T+\delta} \int_{\wt{\Gamma}_4} |M_{42}(\xi_1, \xi_2, \xi_3, \xi_4)| \, |\wh{u}(\xi_1, t) \wh{u}(\xi_2, t) \wh{v}(\xi_3, t) \wh{v}(\xi_4, t)| \, d\sigma \, dt \\
	\leq\; & C \delta N^{-\b} \| I_1 u \|_{X^{1}_{1,b}}^2 \| I_2 v \|_{X^{4}_{1,b}}^2.
\end{split}\ee

Denote $\{\Omega_{k}\}_{k\geq 0}$ to be a sequence of sets in the frequency space such that 
\[
	\Omega_0 = \{\xi\in\m{R}: |\xi| \leq 1\}, \quad \Omega_k = \{\xi\in\m{R}: 2^{k-1}\leq |\xi| < 2^{k} \}, \quad\forall\, k\geq 1.
\]
Then we decompose the integral domain $\wt{\Gamma}_{4}$ in (\ref{M42op_est_2}) to be 
$\wt{\Gamma}_{4} = \bigcup\limits_{(k_1, k_2, k_3, k_4)\in \m{N}^{4}} \Omega_{k_1 k_2 k_3 k_4}$,
where 
\[
	\Omega_{k_1 k_2 k_3 k_4} := \{ (\xi_1,\xi_2,\xi_3,\xi_4)\in \wt{\Gamma}_4: \xi_j \in \Omega_{k_j}, j=1,2,3,4 \}, \quad\forall\, (k_1, k_2, k_3, k_4)\in \m{N}^{4}.
\]
Hence, in order to justify (\ref{M42op_est_2}), it reduces to find some constant $\eps > 0$ such that for any $(k_1, k_2, k_3, k_4)\in \m{N}^{4}$, 
\be\label{M42op_est_3}\begin{split}
	& \int_{T}^{T+\delta} \int_{\Omega_{k_1 k_2 k_3 k_4}} |M_{42}(\xi_1, \xi_2, \xi_3, \xi_4)| \, |\wh{u}(\xi_1, t) \wh{u}(\xi_2, t) \wh{v}(\xi_3, t) \wh{v}(\xi_4, t)| \, d\sigma \, dt \\
	\leq\; & C \delta N^{-\b} \Big( \prod_{j=1}^{4} 2^{-k_j \eps} \Big) \| I_1 u \|_{X^{1}_{1,b}}^2 \| I_2 v \|_{X^{4}_{1,b}}^2.
\end{split}\ee

Thanks to Lemma \ref{Lemma, est on M4}, $|M_{42}(\xi_1, \xi_2, \xi_3, \xi_4)| $ is bounded by $N_{max}^{2s-1} N^{2-2s}$, where 
\[N_{max} := N + \max\limits_{1\leq j\leq 4} |\xi_j| \sim \max\limits_{1\leq j\leq 4} |\xi_j| \quad \text{on} \quad \wt{\Gamma}_4.\]
Thus, (\ref{M42op_est_3}) boils down to 
\be\label{M42op_est_4}\begin{split}
	& \int_{T}^{T+\delta} \int_{\Omega_{k_1 k_2 k_3 k_4}} \wt{N}_{max}^{2s-1} N^{2-2s} \, |\wh{u}(\xi_1, t) \wh{u}(\xi_2, t) \wh{v}(\xi_3, t) \wh{v}(\xi_4, t)| \, d\sigma \, dt \\
	\leq\; & C \delta N^{-\b} \Big( \prod_{j=1}^{4} 2^{-k_j \eps} \Big) \| I_1 u \|_{X^{1}_{1,b}}^2 \| I_2 v \|_{X^{4}_{1,b}}^2,
\end{split}\ee
where $\wt{N}_{max} := \max\limits_{1\leq j\leq 4} |\xi_j|$.
Without loss of generality, we may assume $\wt{N}_{\max} = |\xi_1|$ since other situations can be handled similarly. Under this assumption, $k_1 = \max\limits_{1\leq j\leq 4} k_j$, so (\ref{M42op_est_4}) reduces to 
\be\label{M42op_est_5}\begin{split}
 	& \int_{T}^{T+\delta} \int_{\wt{\Omega}_{k_1 k_2 k_3 k_4}} |\xi_1|^{2s-1} N^{2-2s} \, |\wh{u}(\xi_1, t) \wh{u}(\xi_2, t) \wh{v}(\xi_3, t) \wh{v}(\xi_4, t)| \, d\sigma \, dt \\
 	\leq\; & C \delta N^{-\b} 2^{- 4k_1 \eps} \| I_1 u \|_{X^{1}_{1,b}}^2 \| I_2 v \|_{X^{4}_{1,b}}^2,
\end{split}\ee
where 
\[
	\wt{\Omega}_{k_1 k_2 k_3 k_4} := \Big\{ (\xi_1,\xi_2,\xi_3,\xi_4) \in \Omega_{k_1 k_2 k_3 k_4}: |\xi_1| = \max_{1\leq j\leq 4} |\xi_j| \geq \frac{N}{4} \Big\}.
\]
In the above region, $2^{k_1-1}\leq |\xi_1| < 2^{k_1}$ with $k_1\geq 2$, so $2^{-4k_1\eps} \sim |\xi_1|^{-4\eps}$. As a result, (\ref{M42op_est_5}) is equivalent to 
\be\label{M42op_est_6}\begin{split}
	& \int_{T}^{T+\delta} \int_{\wt{\Omega}_{k_1 k_2 k_3 k_4}} |\xi_1|^{2s-1+4\eps} N^{2-2s+\b} \, |\wh{u}(\xi_1, t) \wh{u}(\xi_2, t) \wh{v}(\xi_3, t) \wh{v}(\xi_4, t)| \, d\sigma \, dt \\
	\leq\; & C \delta \| I_1 u \|_{X^{1}_{1,b}}^2 \| I_2 v \|_{X^{4}_{1,b}}^2.
\end{split}\ee

Noticing $\| I_1 u \|_{X^{1}_{1,b}} = \| f \|_{X^{1}_{0,b}}$ and $ \| I_2 v \|_{X^{4}_{1,b}} = \| g \|_{X^{4}_{0,b}}$, where $f$ and $g$ are defined as 
\be\label{f and g} 
	f(x,t) = \F_{x}^{-1} \big[ \la \xi \ra m_1(\xi) \wh{u}(\xi,t) \big], \quad g(x,t) = \F_{x}^{-1} \big[ \la \xi \ra m_2(\xi) \wh{v}(\xi, t) \big]. 
\ee
Then (\ref{M42op_est_6}) further reduces to 
\be\label{M42op_est_7}\begin{split}
	& \int_{T}^{T+\delta} \int_{\wt{\Omega}_{k_1 k_2 k_3 k_4}} \frac{|\xi_1|^{2s-1+4\eps} N^{2-2s+\b}}{M_{*}(\xi_1, \xi_2,\xi_3,\xi_4)} \, |\wh{f}(\xi_1, t) \wh{f}(\xi_2, t) \wh{g}(\xi_3, t) \wh{g}(\xi_4, t)| \, d\sigma \, dt \\
	\leq\; & C \delta \| f \|_{X^{1}_{0,b}}^2 \| g \|_{X^{4}_{0,b}}^2,
\end{split}\ee
where $\wh{f}(\xi,t) := \F_{x} f(\xi,t)$, $\wh{g}(\xi,t) := \F_{x} g(\xi,t)$ and 
\[
	M_{*}(\xi_1, \xi_2, \xi_3, \xi_4) := \Big(\prod_{j=1}^{2} \la \xi_j \ra m_{1}(\xi_j)\bigg) \bigg(\prod_{k=3}^{4} \la \xi_k \ra m_{2}(\xi_k)\Big).
\]

Next, by choosing 
\be\label{eps_choice} 
	0 < \eps < \frac{1-\b}{4}, 
\ee
we claim that for any $t\in [T, T+\delta]$, 
\be\label{space_ml_est1}\begin{split}
	& \int_{\wt{\Omega}_{k_1 k_2 k_3 k_4}} Q(\xi_1, \xi_2,\xi_3,\xi_4) \, |\wh{f}(\xi_1, t) \wh{f}(\xi_2, t) \wh{g}(\xi_3, t) \wh{g}(\xi_4, t)| \, d\sigma \\
	\leq\; & C \| f(\cdot, t) \|_{L^2_x(\m{R})}^2 \| g(\cdot, t) \|_{L^2_x(\m{R})}^2,
\end{split}\ee
where 
\[
	Q(\xi_1, \xi_2,\xi_3,\xi_4) := \frac{|\xi_1|^{2s-1+4\eps} N^{2-2s+\b}}{M_{*}(\xi_1, \xi_2,\xi_3,\xi_4)}, \qquad \forall\, (\xi_1, \xi_2,\xi_3,\xi_4) \in \wt{\Omega}_{k_1 k_2 k_3 k_4}.
\]
Since $\xi_1 = - (\xi_2+\xi_3+\xi_4)$, then one of $|\xi_2|$, $|\xi_3|$ and $|\xi_4|$ is at least $|\xi_1|/3$, which makes it possible to decompose $\wt{\Omega}_{k_1 k_2 k_3 k_4}$ into three subdomains: $\wt{\Omega}_{k_1 k_2 k_3 k_4} = \cup_{j=2}^{4} D_j$, where
\[
	D_{j} := \big\{ (\xi_1, \xi_2,\xi_3,\xi_4) \in \wt{\Omega}_{k_1 k_2 k_3 k_4}: |\xi_j| \geq |\xi_1|/3 \big\}, \quad j = 2,3,4.
\]
Then (\ref{space_ml_est1}) reduces to the following estimate: 
\be\label{space_ml_est2}\begin{split}
	& \sum_{j=2}^{4} \int_{D_j} Q(\xi_1, \xi_2,\xi_3,\xi_4) \, |\wh{f}(\xi_1, t) \wh{f}(\xi_2, t) \wh{g}(\xi_3, t) \wh{g}(\xi_4, t)| \, d\sigma \\
	\leq\; & C \| f(\cdot, t) \|_{L^2_x(\m{R})}^2 \| g(\cdot, t) \|_{L^2_x(\m{R})}^2,
\end{split}\ee

We first study the estimate on $D_2$. For ease of notations, we denote 
\[N_i = 2^{k_i}, \quad i=1,2,3,4.\]
So it is readily seen that $\la \xi_i\ra \sim N_i$ and $m(\xi_i) \sim m(N_i)$ for $i=1,2,3,4$. Since $|\xi_1|/3 \leq |\xi_2| \leq |\xi_1|$ on $D_2$, then it follows from Lemma \ref{Lemma, m1p2} that $\la \xi_2\ra m(\xi_2) \sim N_1 m(N_1)$. In addition, we take advantage of Lemma \ref{Lemma, m1est} to find 
\[
	\la \xi_i\ra m(\xi_i) \gs \la \xi_i\ra^{s} \sim N_i^{s}, \quad i=3,4.
\]
As a result, 
\[
	M_{*}(\xi_1, \xi_2,\xi_3,\xi_4) \gs N_1^2 m^2(N_1) N_3^{s} N_4^{s}\sim N_1^{2s} N^{2-2s} N_3^{s} N_4^{s},
\]
which implies that 
\be\label{Q_est}
	Q(\xi_1, \xi_2,\xi_3,\xi_4) \ls \frac{N^{\b}}{N_1^{1-4\eps} N_3^{s} N_4^{s}}, \quad \forall\, (\xi_1, \xi_2,\xi_3,\xi_4) \in D_2.
\ee
So the estimate (\ref{space_ml_est2}) on $D_2$ boils down to 
\be\label{space_ml_est3}
	\int_{D_2} |\wh{f}(\xi_1, t) \wh{f}(\xi_2, t) \wh{g}(\xi_3, t) \wh{g}(\xi_4, t)| \, d\sigma 
	\leq C \frac{N_1^{1-4\eps} N_3^{s} N_4^{s}}{N^{\b}} \| f(\cdot, t) \|_{L^2_x(\m{R})}^2 \| g(\cdot, t) \|_{L^2_x(\m{R})}^2.
\ee
Now we parameterize $D_2$ using $(\xi_2, \xi_3, \xi_4)$ to obtain 
\[
	\text{LHS of (\ref{space_ml_est3})} \leq \int_{\Omega_{k_4}} \int_{\Omega_{k_3}} \int_{\Omega_{k_2}} \big| \wh{f}(-\xi_2-\xi_3-\xi_4, t) \big| \big| \wh{f}(\xi_2,t) \wh{g}(\xi_3, t) \wh{g}(\xi_4, t) \big| \,d\xi_2 \,d\xi_3 \,d\xi_4.
\]
Then by applying Holder's inequality, we find 
\be\label{space_ml_est4}\begin{split}
	& \text{LHS of (\ref{space_ml_est3})} \\
	\leq \,\, & \bigg(\int_{\Omega_{k_4}} \int_{\Omega_{k_3}} \int_{\Omega_{k_2}}\big| \wh{f}(-\xi_2-\xi_3-\xi_4,t) \big|^2 \,d\xi_2 \,d\xi_3 \,d\xi_4 \bigg)^{\frac12} \|f(\cdot, t)\|_{L^2_\xi(\m{R})} \|g(\cdot, t)\|_{L^2_\xi(\m{R})}^2.
\end{split}\ee
By the change of variable, we know 
\[\begin{split}
	\int_{\Omega_{k_4}} \int_{\Omega_{k_3}} \int_{\Omega_{k_2}}\big| \wh{f}(-\xi_2-\xi_3-\xi_4,t) \big|^2 \,d\xi_2 \,d\xi_3 \,d\xi_4 & \leq \int_{\Omega_{k_4}} \int_{\Omega_{k_3}} \bigg(\int_{\m{R}}\big| \wh{f}(\eta,t) \big|^2 \,d\eta \bigg) \,d\xi_3 \,d\xi_4 \\
	& \sim N_3 N_4 \| f(\cdot, t) \|_{L^2_\xi(\m{R})}^2,
\end{split}\]
where the last relation is due to the fact that $|\Omega_{k_i}| \sim N_i$ for $i=3,4$. Plugging the above estimate into (\ref{space_ml_est4}) yields 
\[
	\text{LHS of (\ref{space_ml_est3})}
	\leq N_3^{\frac12} N_4^{\frac12} \|f(\cdot, t)\|_{L^2_\xi(\m{R})}^2 \|g(\cdot, t)\|_{L^2_\xi(\m{R})}^2.
\]
Then we apply the Plancherel identity to obtain 
\[
	\text{LHS of (\ref{space_ml_est3})}
	= N_3^{\frac12} N_4^{\frac12} \|f(\cdot, t)\|_{L^2_x(\m{R})}^2 \|g(\cdot, t)\|_{L^2_x(\m{R})}^2 
	\ls \text{RHS of (\ref{space_ml_est3})},
\]
where the last inequality is due to $s>\frac12$, $N_1 \gs N$ and $1-4\eps > \b$ thanks to the choice of $\eps$ in (\ref{eps_choice}). Hence, (\ref{space_ml_est3}) is proven. 
By an argument similar to the proof of (\ref{space_ml_est3}), the desired estimates of (\ref{space_ml_est2}) on the domains $D_3$ and $D_4$ follow. Hence, (\ref{space_ml_est2}), and equivalently (\ref{space_ml_est1}), are established. 

Putting (\ref{space_ml_est1}) into (\ref{M42op_est_7}) leads to 
\[
	\text{LHS of (\ref{M42op_est_7})} \ls \int_{T}^{T+\delta} \| f(\cdot, t) \|_{L^2_x(\m{R})}^2 \| g(\cdot, t) \|_{L^2_x(\m{R})}^2 \,dt.
\]
Now we apply the Holder's inequality to achieve
\be\label{L4L2_est1}
	\text{LHS of (\ref{M42op_est_7})} \ls \delta \| f(x,t) \|_{L^\infty_t([T, T+\delta]; L^2_{x}(\m{R}))}^2  \| g(x,t) \|_{L^\infty_t([T, T+\delta]; L^2_{x}(\m{R}))}^2.
\ee
Comparing (\ref{L4L2_est1}) with (\ref{M42op_est_7}), it suffices to prove 
\be\label{FRS_est1}
	\| f\|_{L^\infty_t L^2_{x}} \ls \| f \|_{X_{0,b}^{1}} \quad\text{and}\quad \| g \|_{L^\infty_t L^2_{x}} \ls \| g \|_{X_{0,b}^{4}}.
\ee
By Plancherel identity and Minkowski inequality, 
\be\label{FRS_est2}
	\| f(x,t) \|_{L^\infty_t L^2_{x}} = \| \F_{x} f(\xi,t)\|_{L^\infty_t L^2_{\xi}} \leq \| \F_{x} f(\xi,t)\|_{L^2_{\xi} L^\infty_t}.
\ee
For any fixed $\xi$ and $t$, it follows from the property of the inverse Fourier transform in the time variable that $\F_{x} f(\xi,t) = \int_{\m{R}} e^{it\tau} \F_{t}\F_{x} f(\xi,\tau) \,d\tau$. 
Then we apply the Holder's inequality to find
\be\label{FRS_est3}\begin{split}
	|\F_{x} f(\xi,t)| &\leq \int_{\m{R}} \frac{1}{\la\tau-\xi^3\ra^b} \, | \F_{t}\F_{x} f(\xi,\tau) | \la\tau-\xi^3\ra^b \,d\tau \\
	&\leq C_b^{\frac12} \bigg(\int_{\m{R}} | \F_{t}\F_{x} f(\xi,\tau) |^2 \la\tau-\xi^3\ra^{2b} \,d\tau\bigg)^{\frac12},
\end{split}\ee
where $C_b = \int_{\m{R}} \la\tau-\xi^3\ra^{-2b} \,d\tau = \int_{\m{R}} \la\tau\ra^{-2b} \,d\tau$ is a finite number since $b > 1/2$. Since the right hand side of the above estimate is independent of $t$, plugging (\ref{FRS_est3}) into (\ref{FRS_est2}) yields
\[
	 \| f(x,t) \|_{L^\infty_t L^2_{x}} \ls \bigg(\int_{\m{R}} \int_{\m{R}} | \F_{t}\F_{x} f(\xi,\tau) |^2 \la\tau-\xi^3\ra^{2b} \,d\tau \,d\xi\bigg)^{\frac12} = \|f\|_{X_{0,b}^1}.
\]
By an analogous argument, we can also verify that $\| g \|_{L^\infty_t L^2_{x}} \ls \| g \|_{X_{0,b}^{4}}$. Thus, \eqref{FRS_est1} is justified and the proof of (\ref{est_M42}) finishes.

\end{proof}

Based on the time derivative (\ref{E2_td}) of $E^{(2)}$ and Lemma \ref{Lemma, est_M4op}, we can control the growth of $E^{(2)}$ now.
\begin{proposition}\label{Prop, growth_E2}
Let $\frac{3}{4} \leq s < 1$, $\frac{1}{2} < b \leq 1$, and $\beta < 1$. If there exists a time interval $[T, T+\delta]$ such that $I_1 u \in X^{1}_{1,b} (\m{R} \times [T, T+\delta])$ and $I_2 v \in X^{4}_{1,b} (\m{R} \times [T, T+\delta])$, then 
\be\label{est_E2}\begin{split}
	& \big| E^{(2)}(u,v)(T+\delta) - E^{(2)}(u,v)(T) \big| \\
	\leq\; & C \delta N^{-\beta} \big( \| I_1 u \|_{X^{1}_{1,b} (\m{R} \times [T, T+\delta])} + \| I_2 v \|_{X^{4}_{1,b} (\m{R} \times [T, T+\delta])} \big)^4,
\end{split}\ee
where $C$ is a positive constant only depending on $s$, $b$ and $\b$.
\end{proposition}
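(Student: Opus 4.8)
The plan is to obtain (\ref{est_E2}) by integrating the pointwise-in-time identity (\ref{E2_td}) over $[T, T+\delta]$ and then inserting the two quadri-linear space-time estimates already furnished by Lemma \ref{Lemma, est_M4op}. Concretely, by the fundamental theorem of calculus together with (\ref{E2_td}),
\[
	E^{(2)}(u,v)(T+\delta) - E^{(2)}(u,v)(T) = i\int_{T}^{T+\delta} \Lambda_4(M_{41}; v)\,\dd t + i\int_{T}^{T+\delta} \Lambda_4(M_{42}; u,u,v,v)\,\dd t.
\]
Applying the triangle inequality and then the bounds (\ref{est_M41}) and (\ref{est_M42}) directly yields
\[
	\big| E^{(2)}(u,v)(T+\delta) - E^{(2)}(u,v)(T) \big| \leq C N^{-\beta} \Big( \| I_2 v \|_{X^{4}_{1,b}}^4 + \| I_1 u \|_{X^{1}_{1,b}}^2 \| I_2 v \|_{X^{4}_{1,b}}^2 \Big),
\]
where for brevity the spaces are understood to be restricted to $\m{R}\times[T,T+\delta]$.

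It then remains only to absorb both terms into the single symmetric quantity $\big(\| I_1 u \|_{X^{1}_{1,b}} + \| I_2 v \|_{X^{4}_{1,b}}\big)^4$ appearing in (\ref{est_E2}). This is elementary: writing $A = \| I_1 u \|_{X^{1}_{1,b}}$ and $B = \| I_2 v \|_{X^{4}_{1,b}}$, one has $B^4 \leq (A+B)^4$ and $A^2 B^2 \leq (A+B)^4$, so the parenthetical sum is at most $2(A+B)^4$, and the factor $2$ is absorbed into $C$. Thus (\ref{est_E2}) follows at once from Lemma \ref{Lemma, est_M4op}, with the same constant dependence on $s$, $b$, $\beta$.

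The only point that is not purely formal is the justification of the time-differentiation identity (\ref{E2_td}) at the regularity guaranteed on $[T, T+\delta]$, and correspondingly the validity of the fundamental theorem of calculus in the first display. The computation leading to (\ref{me_2nd_td}) and (\ref{E2_td}) was carried out formally, differentiating under the integral sign and substituting (\ref{mb-freq}) for $\wh{u}_t$ and $\wh{v}_t$. To make this rigorous I would first verify the identity for smooth solutions, for which all the manipulations and the $C^1$-in-time regularity of $t\mapsto E^{(2)}(u,v)(t)$ are legitimate, and then pass to the general case by approximating $(u,v)(\cdot,T)$ by smooth data and invoking the continuous dependence of the local solution in $X^{1}_{1,b}\times X^{4}_{1,b}$ together with the continuity of both sides of (\ref{est_E2}) in that topology. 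Since $b>\tfrac12$ embeds $X^{1}_{1,b}$ (resp. $X^{4}_{1,b}$) into $C([T,T+\delta];H^1)$, the endpoint values $E^{(2)}(u,v)(T+\delta)$ and $E^{(2)}(u,v)(T)$ are well defined and the limiting procedure is stable. I expect this density-and-continuity step to be the only place requiring care; everything else is an immediate consequence of Lemma \ref{Lemma, est_M4op}.
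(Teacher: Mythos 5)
Your proof is correct and takes exactly the route the paper intends: the paper offers no separate written proof of Proposition \ref{Prop, growth_E2}, presenting it as an immediate consequence of integrating the identity (\ref{E2_td}) in time and invoking Lemma \ref{Lemma, est_M4op}. Your elementary absorption of $\| I_2 v \|_{X^{4}_{1,b}}^4$ and $\| I_1 u \|_{X^{1}_{1,b}}^2 \| I_2 v \|_{X^{4}_{1,b}}^2$ into $\big(\| I_1 u \|_{X^{1}_{1,b}} + \| I_2 v \|_{X^{4}_{1,b}}\big)^4$, together with the density-and-continuity remark justifying (\ref{E2_td}) at the stated regularity, simply spells out the routine details the paper leaves implicit.
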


\subsection{Difference between the first and the second modified energies}
Once the growth of the second modified energy $E^{(2)}$ is suitably bounded by Proposition \ref{Prop, growth_E2}, the growth of the first modified energy $E^{(1)}$ can also be controlled, provided the difference between $E^{(2)}$ and $E^{(1)}$ is appropriately estimated.

\begin{lemma}\label{Lemma, diff_E1_E2}
Let $\frac34\leq s < 1$ and $(u,v)\in\mathcal{H}^{s}(\m{R})$. Let $E^{(1)}$ and $E^{(2)}$ be as defined in (\ref{mE_1st_freq}) and (\ref{mE_2nd_freq}). Then 
\be\label{diff_mE}
	|E^{(2)}(u,v) - E^{(1)}(u,v)| \leq C \|I_1 u\|_{H^{1}(\m{R})}  \|I_2 v\|_{H^{1}(\m{R})}^2,
\ee
where $C$ is a positive constant only depending on $s$.
\end{lemma}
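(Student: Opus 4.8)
The plan is to first notice that $E^{(1)}$ and $E^{(2)}$ differ only in their cubic parts. Comparing (\ref{mE_1st_freq}) with (\ref{mE_2nd_freq}), the two quadratic $\Lam_2$ terms are identical, so
\be
	E^{(2)}(u,v) - E^{(1)}(u,v) = \Lambda_3\big(M_{\mathrm{diff}}; u, v, v\big),
\ee
where $M_{\mathrm{diff}}(\xi_1,\xi_2,\xi_3) := m_1(\xi_1) m_2(\xi_2) m_2(\xi_3) - \sigma_3(\xi_1,\xi_2,\xi_3)$. Thus the whole estimate reduces to a pointwise bound on $M_{\mathrm{diff}}$ followed by a \emph{fixed-time} trilinear estimate; in particular no dispersion or Fourier-restriction structure enters here.

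Second, I would record that $M_{\mathrm{diff}}$ is bounded. Since $0 \le m_1, m_2 \le 1$, the product satisfies $|m_1(\xi_1)m_2(\xi_2)m_2(\xi_3)| \le 1$. On the region $\{\max_i|\xi_i| \ge N/2\}$, Lemma \ref{Lemma, est on sigma_3} gives $|\sigma_3| \le C$, while on the complementary low-frequency region both $\sigma_3 = 1$ and $m_1(\xi_1)m_2(\xi_2)m_2(\xi_3) = 1$, so in fact $M_{\mathrm{diff}} = 0$ there. Hence $|M_{\mathrm{diff}}| \le C$ uniformly on $\Gamma_3$, with $C$ depending only on $s$.

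Third, I would pass to weighted functions. Setting $U(\xi) := \la\xi\ra m_1(\xi)|\wh{u}(\xi)|$ and $V(\xi) := \la\xi\ra m_2(\xi)|\wh{v}(\xi)|$, one has $\|U\|_{L^2} = \|I_1 u\|_{H^1}$ and $\|V\|_{L^2} = \|I_2 v\|_{H^1}$. A direct check on (\ref{m1}) and (\ref{m2_fix}) gives the uniform-in-$N$ lower bounds $\la\xi\ra m_1(\xi) \gs \la\xi\ra^s$ and $\la\xi\ra m_2(\xi) \gs \la\xi\ra^s$. Combining these with $|M_{\mathrm{diff}}| \le C$ and the triangle inequality, the claim (\ref{diff_mE}) reduces to the trilinear estimate
\be
	\int_{\Gamma_3} \frac{U(\xi_1) V(\xi_2) V(\xi_3)}{\la\xi_1\ra^s \la\xi_2\ra^s \la\xi_3\ra^s}\, d\sigma \ls \|U\|_{L^2}\|V\|_{L^2}^2.
\ee

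Finally, I would prove this estimate. Parametrizing $\Gamma_3$ by $(\xi_1,\xi_3)$ with $\xi_2 = -\xi_1-\xi_3$ and applying Cauchy--Schwarz in $\xi_1$ (singling out the single factor $U$), it suffices to show $\|\la\cdot\ra^{-s} H\|_{L^2} \ls \|V\|_{L^2}^2$, where $H(\xi_1) := \int_{\m{R}} W(-\xi_1-\xi_3) W(\xi_3)\,d\xi_3$ and $W := \la\cdot\ra^{-s} V$. A shift-invariant Cauchy--Schwarz gives the pointwise bound $\|H\|_{L^\infty} \le \|W\|_{L^2}^2 \le \|V\|_{L^2}^2$, whence $\|\la\cdot\ra^{-s} H\|_{L^2} \le \|H\|_{L^\infty}\,\|\la\cdot\ra^{-s}\|_{L^2} \ls \|V\|_{L^2}^2$, the last factor being finite precisely because $2s > 1$. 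The assumption $s \ge \tfrac34$ (indeed any $s > \tfrac12$) is exactly what secures this low-frequency integrability, which is the only genuine obstacle in the argument; everything else is bounded multipliers and Cauchy--Schwarz, and the resulting constant depends only on $s$, as required.
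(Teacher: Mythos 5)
Your proof is correct, and its skeleton matches the paper's: both reduce the difference to a trilinear expression with a pointwise-bounded multiplier (via Lemma \ref{Lemma, est on sigma_3} for $\sigma_3$, plus $m_1,m_2\le 1$ and the low-frequency identity $\sigma_3=1$), then divide out the weights using $\la\xi\ra^{1-s}m_j(\xi)\gtrsim 1$ and prove a weighted trilinear estimate at fixed time. Where you genuinely diverge is in the proof of that trilinear estimate. The paper returns to physical space: it sets $f_1=\F_x^{-1}\big[\la\xi\ra^{1/4}|\wh{f}(\xi)|\big]$, $g_1=\F_x^{-1}\big[\la\xi\ra^{1/4}|\wh{g}(\xi)|\big]$, notes these are real-valued so Plancherel converts the $\Gamma_3$ integral into $\int_{\m{R}} f_1 g_1^2\,dx$, and then applies H\"older in $L^3$ together with the Sobolev embedding $H^{3/4}(\m{R})\hookrightarrow L^3(\m{R})$. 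You instead stay entirely on the Fourier side: Cauchy--Schwarz in $\xi_1$ isolates $\|U\|_{L^2}$, the convolution bound $\|W*W\|_{L^\infty}\le\|W\|_{L^2}^2$ handles the remaining two factors, and the only analytic input is $\la\cdot\ra^{-s}\in L^2(\m{R})$, i.e. $2s>1$. Your route buys self-containedness (no Sobolev embedding, no need for the real-valuedness/Plancherel step) and makes transparent that the lemma holds for every $s>\frac12$, not just $s\ge\frac34$; the paper's route is shorter if one takes standard embeddings for granted and is stylistically parallel to its other physical-space arguments (e.g. Lemma \ref{Lemma, E and H1}). Two cosmetic remarks: your observation that $M_{\mathrm{diff}}$ vanishes identically on $\{\max_i|\xi_i|\le N/2\}$ is a nice refinement but is not needed (boundedness suffices), and one should keep in mind the constant Jacobian factor in parametrizing $\Gamma_3$ by $(\xi_1,\xi_3)$, which is of course harmless.
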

\begin{proof}
According to  (\ref{mE_1st_freq}) and (\ref{mE_2nd_freq}), 
\[
	|E^{(2)}(u,v) - E^{(1)}(u,v)| = \big| \Lambda_3(m_1(\xi_1) m_2(\xi_2) m_2(\xi_3); u, v, v) - \Lambda_{3} (\sigma_3; u,v,v)\big|,
\]
where 
\[
	\sigma_3(\xi_1, \xi_2, \xi_3) = \frac{\xi_1^3 m_1^2(\xi_1) + 4\xi_2^3 m_2^2(\xi_2) + 4\xi_3^3 m_2^2(\xi_3)}{ \xi_1^3 + 4\xi_2^3 + 4\xi_3^3}, \quad \forall\, (\xi_1,\xi_2,\xi_3)\in \Gamma_3.
\]
So it suffices to prove that 
\be\label{diff_energy_est1}\left\{\begin{array}{rcl}
	\big| \Lambda_3(m_1(\xi_1) m_2(\xi_2) m_2(\xi_3); u, v, v) \big| &\leq & C \|I_1 u\|_{H^{1}(\m{R})}  \|I_2 v\|_{H^{1}(\m{R})}^2, \\
	|\Lambda_{3} (\sigma_3; u,v,v) | &\leq & C \|I_1 u\|_{H^{1}(\m{R})}  \|I_2 v\|_{H^{1}(\m{R})}^2.
\end{array}\right.\ee
Define $f = I_1 u$ and $g = I_2 v$. Then (\ref{diff_energy_est1}) reduces to 
\be\label{diff_energy_est2}\left\{\begin{array}{rcl}
	\big| \Lambda_3(1; f, g, g) \big| &\leq & C \| f \|_{H^{1}(\m{R})}  \| g \|_{H^{1}(\m{R})}^2, \vspace{0.1in}\\
	\Big| \Lambda_{3} \Big(\dfrac{\sigma_3}{m_1(\xi_1) m_2(\xi_2) m_2(\xi_3)}; f, g, g\Big) \Big| &\leq & C \| f \|_{H^{1}(\m{R})}  \| g \|_{H^{1}(\m{R})}^2.
\end{array}\right.\ee
Since Lemma \ref{Lemma, est on sigma_3} provides a constant bound on $|\sigma_3|_{L^{\infty}}$, and both $m_1$ and $m_2$ are bounded by $1$, then the following estimate (\ref{diff_energy_est3}) implies (\ref{diff_energy_est2}).
\be\label{diff_energy_est3}
\int_{\Gamma_3} \frac{|\wh{f}(\xi_1) \wh{g}(\xi_2) \wh{g}(\xi_3) |}{m_1(\xi_1) m_2(\xi_2) m_2(\xi_3)} \,d\sigma \leq C \| f \|_{H^{1}(\m{R})}  \| g \|_{H^{1}(\m{R})}^2.
\ee

Now it remains to verify (\ref{diff_energy_est3}). According to Lemma \ref{Lemma, m1est}, $m_1(\xi_1) \gs \la \xi_1\ra^{s-1}$ and $m_2(\xi_j) \gs \la \xi_j\ra^{s-1}$ for $j=2,3$, so
it suffices to justify 
\be\label{diff_energy_est4}
\int_{\Gamma_3} 
\la\xi_1\ra^{1-s} |\wh{f}(\xi_1)| \la\xi_2\ra^{1-s} |\wh{g}(\xi_2)| \la\xi_3\ra^{1-s} |\wh{g}(\xi_3)|\,d\sigma 
\leq C \| f \|_{H^{1}(\m{R})}  \| g \|_{H^{1}(\m{R})}^2.
\ee
Define 
\be\label{f1g1}
	f_1 = \F_{x}^{-1}\big[ \la\xi\ra^{1-s} |\wh{f}(\xi)| \big], \quad g_1 = \F_{x}^{-1}\big[ \la\xi\ra^{1-s} |\wh{g}(\xi)| \big].
\ee
Then (\ref{diff_energy_est4}) is equivalent to 
\[
	\int_{\Gamma_3} \wh{f_1}(\xi_1) \wh{g_1}(\xi_2) \wh{g_1}(\xi_3) \,d\sigma \leq C \| f \|_{H^{1}(\m{R})}  \| g \|_{H^{1}(\m{R})}^2.
\]
According to the Plancherel identity, we have 
\[\begin{split}
	\int_{\m{R}} f_1(x) g_1^2(x) \,dx = \int_{\m{R}} \wh{f_1}(\xi_1) \wh{g_1^2}(-\xi_1) \,d\xi_1 &= \int_{\m{R}} \wh{f_1}(\xi_1) \bigg(\int_{\m{R}} \wh{g_1}(\xi_2) \wh{g_1}(-\xi_1-\xi_2) \,d\xi_2\bigg) \,d\xi_1 \\
	& = \int_{\Gamma_3} \wh{f_1}(\xi_1) \wh{g_1}(\xi_2) \wh{g_1}(\xi_3) \,d\sigma.
\end{split}\]
So it boils down to showing 
\be\label{diff_energy_est5}
	\int_{\m{R}} f_1(x) g_1^2(x) \,dx \leq C \| f \|_{H^{1}(\m{R})}  \| g \|_{H^{1}(\m{R})}^2.
\ee
Using Holder's inequality and Sobolev inequality, we have 
\[
	\int_{\m{R}} f_1(x) g_1^2(x) \,dx \leq \|f_1\|_{L^3(\m{R})} \|g_1\|_{L^3(\m{R})}^2 \leq C  \|f_1\|_{H^{\frac16}(\m{R})} \|g_1\|_{H^{\frac16}(\m{R})}^2.
\]
Noting that
\[\|f_1\|_{H^{\frac16}(\m{R})} = \|f\|_{H^{\frac76-s}(\m{R})} \quad\text{and}\quad 
\|g_1\|_{H^{\frac16}(\m{R})} = \|g\|_{H^{\frac76-s}(\m{R})},\] 
so (\ref{diff_energy_est5}) is verified since $\frac76-s \leq 1$.
\end{proof}

\section{Global well-posedness}
\label{Sec, gwp}
\subsection{Relation between the first modified energy and the $H^1$ norm}
\label{Sec, energy_relation}

The local well-posedness of the system (\ref{MB-4}) has been justified and the local solution is guaranteed to exist for a certain amount of time in Proposition \ref{Prop, ls1} for any initial data $(u_0, v_0)\in\mathcal{H}^{s}(\m{R})$. So it suffices to extend the local solution to an arbitrary long time interval. The conserved energy $E(u,v)$ and the first modified energy $E^{(1)}(u,v)$, as defined in (\ref{eq_E}) and (\ref{mE_1st}) respectively, will play crucial role in establishing this goal. We first examine the relation between $E$ and $H^{1}$.

\begin{lemma}\label{Lemma, E and H1}
	Let $f$ and $g$ be any $H^1$ functions on $\R$. Denote $M = \|(f,g)\|_{L^2(\m{R})}$. Then
	\be\label{E and H1}
	\begin{cases}
		\|(f, g)\|_{\mathcal{H}^1(\m{R})}^2 \leq E(f,g) + \frac38 M^{10/3} + M^2, \\
		| E(f,g) | \leq 5 \|(f, g)\|_{\mathcal{H}^1(\m{R})}^2 + \frac34 M^{10/3},
	\end{cases}
	\ee
	where $E(f,g)$ is defined as in \eqref{eq_E}, that is
	$E(f,g) := \int_{\R} \left[ f_x^2(x) + 4 g_x^2(x) - f(x) g^2(x) \right] \,dx$.
\end{lemma}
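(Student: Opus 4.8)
The plan is to isolate the unique indefinite-sign contribution $\int_{\R} f g^2\,dx$ inside $E(f,g)$ and to absorb it, via a one-dimensional interpolation inequality followed by Young's inequality, partly into the nonnegative gradient terms and partly into a power of the mass $M$. All the other terms appearing in $E$ and in $\|(f,g)\|_{\mathcal{H}^1}^2$ are nonnegative and match up directly, so the whole lemma reduces to a single cubic estimate.

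First I would record the two elementary identities
\[
	\|(f,g)\|_{\mathcal{H}^1(\R)}^2 = M^2 + \|f_x\|_{L^2}^2 + \|g_x\|_{L^2}^2, \qquad E(f,g) = \|f_x\|_{L^2}^2 + 4\|g_x\|_{L^2}^2 - \int_{\R} f g^2\,dx,
\]
so that both claimed bounds become statements about $\int_{\R} f g^2\,dx$ alone. The key cubic estimate I would establish is
\[
	\Big| \int_{\R} f g^2\,dx \Big| \leq \|f\|_{L^2}\,\|g\|_{L^2}^{3/2}\,\|g_x\|_{L^2}^{1/2},
\]
obtained from the sharp one-dimensional bound $\|g\|_{L^\infty}^2 \leq \|g\|_{L^2}\|g_x\|_{L^2}$ (itself a consequence of $g^2(x) = 2\int_{-\infty}^x g g_x\,dy$ and Cauchy--Schwarz) together with Hölder's inequality $\int_{\R} |f| g^2\,dx \leq \|g\|_{L^\infty}\|f\|_{L^2}\|g\|_{L^2}$.

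For the lower bound in \eqref{E and H1}, subtracting the two identities shows it is equivalent to $\int_{\R} f g^2\,dx \leq 3\|g_x\|_{L^2}^2 + \frac38 M^{10/3}$. I would feed the cubic estimate into Young's inequality with exponents $4$ and $\frac43$, tuning the weight so that the extracted gradient factor appears as exactly $3\|g_x\|_{L^2}^2$; this is precisely the surplus $4\|g_x\|_{L^2}^2 - \|g_x\|_{L^2}^2$ left after the clean $\|g_x\|_{L^2}^2$ is matched against the $H^1$ seminorm. The leftover factor is a constant times $\|f\|_{L^2}^{4/3}\|g\|_{L^2}^2$, which collapses to $M^{10/3}$ using $\|f\|_{L^2}\leq M$ and $\|g\|_{L^2}\leq M$; a direct check shows the resulting numerical constant sits below $\frac38$, so the stated bound holds with room to spare. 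For the upper bound I would instead write $|E(f,g)| \leq \|f_x\|_{L^2}^2 + 4\|g_x\|_{L^2}^2 + |\int_{\R} f g^2\,dx|$ and use Young's inequality to extract a single $\|g_x\|_{L^2}^2$ from the cubic term; regrouping as $\|f_x\|_{L^2}^2 + 5\|g_x\|_{L^2}^2 \leq 5\|(f,g)\|_{\mathcal{H}^1}^2$ produces the factor $5$, while the remaining mass power is again dominated by $\frac34 M^{10/3}$.

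The only genuine difficulty is the indefinite-sign cubic term; everything else is nonnegative bookkeeping. The delicate point is the weighting in Young's inequality: one must extract the top-order gradient strictly within the available definite quadratic budget (only $3\|g_x\|_{L^2}^2$ is free for the lower bound, after reserving $\|g_x\|_{L^2}^2$ for the $\mathcal{H}^1$ seminorm), and then verify that the residual powers of $\|f\|_{L^2}$ and $\|g\|_{L^2}$ recombine to the scaling-consistent exponent $M^{10/3}$. Once these weights are fixed, the explicit constants $\frac38$ and $\frac34$ follow from the crude bounds $\|f\|_{L^2},\|g\|_{L^2}\leq M$.
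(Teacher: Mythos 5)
Your proof is correct and follows essentially the same route as the paper: the identical chain $\left|\int_{\R} f g^2\,dx\right| \le \|g\|_{L^\infty}\|f\|_{L^2}\|g\|_{L^2} \le \|g_x\|_{L^2}^{1/2}\|f\|_{L^2}\|g\|_{L^2}^{3/2} \le \|g_x\|_{L^2}^{1/2}M^{5/2}$, followed by weighted Young's inequality with exponents $4$ and $\tfrac43$ to produce $3\|g_x\|_{L^2}^2+\tfrac38 M^{10/3}$ for the first bound and a small multiple of $\|g_x\|_{L^2}^2$ plus $\tfrac34 M^{10/3}$ for the second. The only (immaterial) difference is the weight chosen in the second bound: you extract a full $\|g_x\|_{L^2}^2$ whereas the paper extracts $\tfrac14\|g_x\|_{L^2}^2$, and both fit within the stated factor $5$.
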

\begin{proof}
	According to the standard inequality:
	\[
	\|g\|_{L^{\infty}(\m{R})}^2 \leq \| g_{x}\|_{L^2(\m{R})} \|g\|_{L^2(\m{R})}, \quad \forall\, g\in H^{1}(\m{R}),
	\]
	we find
	\[
	\int_{\m{R}} |f| g^2 \,dx \leq \|g\|_{L^{\infty}} \|f\|_{L^2} \|g\|_{L^2} \leq \|g_{x}\|_{L^2}^{1/2} \|f\|_{L^2} \|g\|_{L^2}^{3/2} \leq  \|g_{x}\|_{L^2}^{1/2} M^{5/2}.
	\]
	Thanks to Young's inequality, we deduce that 
	\[
		\|g_{x}\|_{L^2}^{1/2} M^{5/2} \leq \frac14\|g_{x}\|_{L^2}^{2} + \frac34 M^{10/3},
	\]
	which implies that 
	\[
		|E(f,g)| \leq \int_{\R} \left[ f_x^2(x) + 4 g_x^2(x) \right] \,dx + \Big( \frac14 \|g_{x}\|_{L^2}^{2} + \frac34 M^{10/3} \Big) \leq  5 \|(f, g)\|_{\mathcal{H}^1(\m{R})}^2 + \frac34 M^{10/3}.
	\]
	On the other hand, by choosing $\lam = 12^{\frac14}$, it again follows from the Young's inequality that
	\[\begin{split}
			\|g_{x}\|_{L^2}^{1/2} M^{5/2} = \Big(\lam \|g_{x}\|_{L^2}^{1/2}\Big)\Big(\frac{1}{\lam} M^{5/2}\Big) &\leq \frac14 \Big(\lam \|g_{x}\|_{L^2}^{1/2}\Big)^{4} + \frac34 \Big( \frac{1}{\lam} M^{5/2} \Big)^{4/3} \\
			&\leq 3 \|g_{x}\|_{L^2}^{2} + \frac38 M^{10/3}.
	\end{split}\]
	As a result, 
	\[
		E(f,g) \geq \int_{\R} \left[ f_x^2(x) + 4 g_x^2(x) \right] \,dx - \Big( 3 \|g_{x}\|_{L^2}^{2} + \frac38 M^{10/3} \Big) = \|(f_x, g_x)\|_{L^2(\m{R})}^2 - \frac38 M^{10/3}.
	\]
	Hence, (\ref{E and H1}) is justified.
\end{proof}

Since the first modified energy $E^{(1)}$, defined in (\ref{mE_1st}), is closely related to the energy $E$, we can derive a relation between $E^{(1)}$ and the $\mathcal{H}^{1}$ norm of $(I_1 u, I_2 v)$ based on Lemma \ref{Lemma, E and H1}.

\begin{corollary}\label{Cor, E1 and H1}
	Let $\frac34 \leq s < 1$, and let $u$ and $v$ be any $H^{s}$ functions on $\m{R}$. Denote $M = \|(I_1 u, I_2 v)\|_{L^2(\m{R})}$. Then 
	\be\label{E1 and H1}
	\begin{cases}
		\|(I_1 u, I_2 v)\|_{\mathcal{H}^1(\m{R})}^2 \leq E^{(1)}(u,v) + \frac38 M^{10/3} + M^2, \\
		| E^{(1)}(u,v) | \leq 5 \|(I_1 u, I_2 v)\|_{\mathcal{H}^1(\m{R})}^2 + \frac34 M^{10/3},
	\end{cases}
	\ee
	where $E^{(1)}(u,v)$ is as defined in (\ref{mE_1st}), that is $E^{(1)}(u,v) = E(I_1 u, I_2 v)$. 
\end{corollary}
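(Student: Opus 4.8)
The plan is to obtain this corollary as an immediate specialization of Lemma \ref{Lemma, E and H1}, which was deliberately phrased for arbitrary $H^1$ functions precisely so that it can be reused here. The only preliminary point to check is that the modified solution components are legitimate inputs to that lemma. By the definition (\ref{fm-Iop}) together with (\ref{m1}) and (\ref{m2_fix}), the multipliers $m_1, m_2$ lie in $L^{\infty}(\m{R})$ and satisfy $\la\xi\ra\, m_j(\xi) \ls \la\xi\ra^{s}$, so $I_j : H^{s}(\m{R}) \to H^{1}(\m{R})$. Consequently, for $u, v \in H^{s}(\m{R})$ the functions $f := I_1 u$ and $g := I_2 v$ both belong to $H^1(\m{R})$, and Lemma \ref{Lemma, E and H1} is directly applicable to the pair $(f,g)$.

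With this substitution the bookkeeping matches up exactly: the mass quantity $M = \|(f,g)\|_{L^2(\m{R})}$ coincides with $M = \|(I_1 u, I_2 v)\|_{L^2(\m{R})}$ in the corollary, and by the definition (\ref{mE_1st}) of the first modified energy one has $E^{(1)}(u,v) = E(I_1 u, I_2 v) = E(f,g)$. Reading the two inequalities of (\ref{E and H1}) with $f = I_1 u$ and $g = I_2 v$ therefore yields verbatim the two inequalities of (\ref{E1 and H1}). No further estimation is needed, so there is no genuine obstacle: the corollary is a transcription of the lemma once the mapping property $I_j : H^{s} \to H^{1}$ is recorded, which is the single routine step to include in the write-up.
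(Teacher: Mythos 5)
Your proposal is correct and is exactly the paper's route: the corollary is obtained by applying Lemma \ref{Lemma, E and H1} to $f = I_1 u$ and $g = I_2 v$, using $E^{(1)}(u,v) = E(I_1 u, I_2 v)$ from (\ref{mE_1st}); the paper leaves this substitution implicit, and your one added step (recording that $I_j : H^{s}(\m{R}) \to H^{1}(\m{R})$ so the lemma applies) is the right routine detail to spell out.
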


\subsection{Proof of the main theorem}
After all the preparations in previous sections, we are ready to establish the global well-posedness of the system (\ref{MB-4}).
\begin{proof}[Proof of Theorem \ref{Thm, main}]
In this proof, we fix the parameter $b\in (\frac12, 1)$ in Proposition \ref{Prop, ls} and $\b \in (\frac12, 1)$ in Proposition \ref{Prop, growth_E2}. So the dependence of constants on $b$ or $\b$ will not be tracked anymore. 
Since the initial data $(u_0, v_0) \in \mathcal{H}^{s}(\m{R})$, a local solution of (\ref{MB-4}) is guaranteed to exist for some time according to Proposition \ref{Prop, ls1}, then it suffices to extend this local solution to an arbitrary time interval $[0,T_*]$ in order to prove Theorem \ref{Thm, main}. If the size of $(u_0, v_0)$ is small, then the lifespan of the local solution admits a uniform lower bound as shown in Proposition \ref{Prop, ls} or Corollary \ref{Cor, ls1}. So we first rescale the local solution $(u,v)$ to be $(u^{\lam}, v^{\lam})$ as in (\ref{rescale_soln}) such that the rescaled initial data $(u^{\lam}_0, v^{\lam}_0)$ is small. 
\be\label{rescale_soln}
	(u^{\lam}, v^{\lam})(x,t) = \lam^{-2}(u,v)(\lam^{-1}x, \lam^{-3}t), \quad \forall\, (x,t)\in\m{R}^2, \,\lam\geq 1.
\ee
As discussed in the proof of Proposition \ref{Prop, ls1}, $(u^{\lam}, v^{\lam})$ also satisfies the system (\ref{MB-4}) but with a different initial data $(u^{\lam}_0, v^{\lam}_0)$ as follows:
\be\label{rescale_id}
(u^{\lam}_0, v^{\lam}_0)(x) = (u^{\lam}, v^{\lam})(x,0) = \lam^{-2} (u_0, v_0)(\lam^{-1}x), \quad\forall\, x\in\m{R}.
\ee
In order to prove the lifespan of $(u,v)$ is at least $T_*$, it suffices to show that the lifespan $T_{\lam}$ of $(u^{\lam}, v^{\lam})$ is at least $\lam^3 T_*$.

Denote 
\[
	B_{0} = \|(u_0, v_0)\|_{L^{2}(\m{R})}, \quad B_{s} = \|(u_0, v_0)\|_{\mathcal{H}^{s}(\m{R})}.
\]
Then it follows from (\ref{rescale_soln}) and the conservation of the $L^2$ norm of $(u,v)$ that
\[
\| (u^{\lam}, v^{\lam})(\cdot, t) \|_{L^2(\m{R})} \leq \lam ^{-\frac32} \|(u,v)(\cdot, \lam^{-3} t)\|_{L^2(\m{R})} = \lam ^{-\frac32} B_0, 
\]
which further implies that 
\be\label{bdd_scaled0_I}
\| (I_1 u^{\lam}, I_2 v^{\lam})(\cdot, t) \|_{L^2(\m{R})} \leq \| (u^{\lam}, v^{\lam})(\cdot, t) \|_{L^2(\m{R})} \leq \lam ^{-\frac32} B_0,  
\ee
Meanwhile, based on (\ref{rescale_id}) and Lemma \ref{Lemma, m1est}, we find
\be\label{bdd_scaled1_I}
\| (I_1 u^{\lam}_0, I_2 v^{\lam}_0)\|_{\mathcal{H}^1(\m{R})} \leq N^{1-s} \| (u^{\lam}_0, v^{\lam}_0)\|_{\mathcal{H}^s(\m{R})} \leq N^{1-s}\lam^{-\frac32} B_s.
\ee
We adopt the notation $\eps_{*}$ as that in Proposition \ref{Prop, ls} such that whenever (\ref{exi_cri}) is satisfied at some time $T$, then the local solution can be extended further to the time $T+1$.
\be\label{exi_cri}
\| (I_1 u^{\lam}, I_2 v^{\lam})(\cdot, T) \|_{\mathcal{H}^{1}(\m{R})} \leq \eps_{*}.
\ee
Without loss of generality, we may assume $\eps_{*}\leq 1$.
For the initial data based on (\ref{bdd_scaled1_I}), we choose a large $\lam$ such that 
\be\label{lam_choice}
	N^{1-s}\lam^{-\frac32} B_s = \eps_0 := \eps_* / C_{*},
\ee
where $C_{*} > 1$ is a large positive number which will be determined later. According to (\ref{lam_choice}),
\be\label{lam_f}
	\lam = \eps_{0}^{-\frac23} N^{\frac23(1-s)} B_s^{\frac23}.
\ee
With this choice of $\lam$, it follows from (\ref{bdd_scaled0_I}) and (\ref{bdd_scaled1_I}) that
\be\label{L2_bdd_I}
\| (I_1 u^{\lam}, I_2 v^{\lam})(\cdot, t) \|_{L^2(\m{R})} \leq \| (u^{\lam}, v^{\lam})(\cdot, t) \|_{L^2(\m{R})} \leq \eps_0 N^{-(1-s)},  
\ee
and 
\be\label{H1_bdd_I}
\| (I_1 u^{\lam}_0, I_2 v^{\lam}_0)\|_{\mathcal{H}^1(\m{R})} \leq \eps_0 < \eps_*.
\ee

Thanks to (\ref{H1_bdd_I}) and the criterion (\ref{exi_cri}), the lifespan of the local solution $(u^{\lam}, v^{\lam})$ is at least $1$. For convenience of notation, we denote
\[\begin{cases}
& E^{(1)}(t) := E^{(1)}\big(u^{\lam}(t), v^{\lam}(t) \big), \quad E^{(2)}(t) := E^{(2)}\big(u^{\lam}(t), v^{\lam}(t) \big), \\
& F(t) := \|(I_1 u^{\lam}, I_2 v^{\lam})(\cdot, t)\|_{\mathcal{H}^1(\m{R})}.
\end{cases}\]
Then (\ref{H1_bdd_I}) becomes $F(0)\leq \eps_0 < \eps_*$. In addition, according to Corollary \ref{Cor, E1 and H1} and (\ref{L2_bdd_I}), we obtain 
\be\label{H1_ls_fmE}
	F^{2}(t) \leq E^{(1)}(t) + 2\eps_0^{2} N^{-2(1-s)} \leq E^{(1)}(t) + 2\eps_0^{2},
\ee
and 
\be\label{fmE_ls_H1}
	| E^{(1)}(t) | \leq 5 F^{2}(t) + \eps_0^{10/3} N^{-10(1-s)/3} \leq 5 F^{2}(t) + \eps_0^{2}.
\ee
Plugging $t=0$ in (\ref{fmE_ls_H1}) and combining with (\ref{H1_bdd_I}) yields 
\be\label{E1_fi_bdd}
	| E^{(1)}(0) | \leq 5 \eps_0^2 +  \eps_0^{2} = 6\eps_0^2.
\ee

Next, we will first assume an induction result, Claim \ref{Claim, ind_key}, to finish the current proof, and then turn back to justify Claim \ref{Claim, ind_key}. According to Claim \ref{Claim, ind_key}, by choosing $C_{*}$ to be larger than a constant which only depends on $s$, the lifespan of $(u^{\lam}, v^{\lam})$ is at least $N^{\b}$. Our goal is to verify $N^{\b} \geq \lam^{3} T_{*}$, where $T_{*}$ is a fixed given number. Due to the choice of $\lam$ in (\ref{lam_f}), that is to show 
\[
	N^{\b} \geq \Big(\eps_{0}^{-\frac23} N^{\frac23(1-s)} B_s^{\frac23}\Big)^{3} T_{*},
\]
which is equivalent to 
\be\label{N_large}
	N^{\b - 2(1-s)} \geq \eps_{0}^{-2} B_s^{2} T_{*} = \eps_{*}^{-2} C_{*}^2 B_s^{2} T_{*}.
\ee
Since $\b > \frac12$ and $s\geq \frac34$, then $\b > 2(1-s)$. Therefore, (\ref{N_large}) can be achieved by choosing $N$ large enough. Finally, it remains to establish the following result. 
\begin{claim}\label{Claim, ind_key}
	There exist constants $B_{1}$ and $B_2$, which only depend on $s$, such that when $C_{*} \geq B_{1}$, the lifespan $T_{\lam}$ of $(u^{\lam}, v^{\lam})$ is at least $N^{\b}$. Moreover, for any integer $K \in [0, N^{\b}]$, we have 
	\be\label{bdd_fmEini}
		| E^{(1)}(K) | \leq 8 \eps_0^2, 
	\ee
	and 
	\be\label{bdd_fmEmid}
		|E^{(1)}(t)| \leq |E^{(1)}(0)| + B_2 \eps_0^3  + B_2 (K+1) N^{-\b} \eps_0^4, \quad \forall\, t\in[K, K+1].
	\ee
\end{claim}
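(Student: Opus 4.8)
The plan is to prove the claim by induction on the integer $K$, combining a restart of the local theory on each unit interval $[K,K+1]$ with a short bootstrap for $F$ and a telescoping of the energy increments. The induction hypothesis is precisely (\ref{bdd_fmEini}), namely $|E^{(1)}(K)|\leq 8\eps_0^2$, whose base case $K=0$ is (\ref{E1_fi_bdd}). All constants below depend only on $s$, and the smallness required to close the scheme is purchased at the very end by taking $C_*$ (hence $\eps_0=\eps_*/C_*$) large; since the constants are fixed before $\eps_0$ is chosen, there is no circularity. The main point to keep in mind is a competition of scales: the per-interval discrepancy between the two energies is of size $\eps_0^3$ while the per-interval increment of $E^{(2)}$ is only $N^{-\beta}\eps_0^4$. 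The argument succeeds because the $\eps_0^3$ errors sit at the two endpoints $0$ and $t$ and do not accumulate, whereas the tiny increments accumulate over $\sim N^{\beta}$ steps to a total of $O(\eps_0^4)$; both are ultimately absorbed into the quadratic budget $8\eps_0^2$.

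First I would restart the local theory at time $K$. From (\ref{H1_ls_fmE}) and $|E^{(1)}(K)|\leq 8\eps_0^2$ one gets $F^2(K)\leq 10\eps_0^2$, hence $F(K)\leq\sqrt{10}\,\eps_0\leq\eps_*$ as soon as $C_*\geq\sqrt{10}$. Regarding $(u^{\lam},v^{\lam})(\cdot,K)$ as new initial data, Proposition \ref{Prop, ls} extends the solution to $[K,K+1]$ with
\[
\|I_1 u^{\lam}\|_{X^{1}_{1,b}(\m{R}\times[K,K+1])}+\|I_2 v^{\lam}\|_{X^{4}_{1,b}(\m{R}\times[K,K+1])}\leq C\eps_0 .
\]
Feeding this into Proposition \ref{Prop, growth_E2}, and using that the restriction norms are monotone in the interval, bounds the increment of the second modified energy on every subinterval:
\[
|E^{(2)}(t)-E^{(2)}(K)|\leq C N^{-\beta}\eps_0^4,\qquad t\in[K,K+1].
\]
Iterating this one-step extension from $K=0$ upward, the solution persists at least up to time $N^{\beta}$, which yields the claimed lower bound on the lifespan $T_{\lam}$.

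Next I would control $F$ uniformly on $[K,K+1]$ by a continuity argument, invoking Lemma \ref{Lemma, diff_E1_E2} in the form $|E^{(1)}(t)-E^{(2)}(t)|\leq C F^3(t)$. Writing $G(t)=F^2(t)$ and chaining (\ref{H1_ls_fmE}), the increment bound above, and the hypothesis at time $K$ (together with $|E^{(2)}(K)-E^{(1)}(K)|\leq C F^3(K)\leq C'\eps_0^3$) gives a self-improving inequality of the shape
\[
G(t)\leq 10\eps_0^2+C\eps_0^3+C\,G(t)^{3/2},\qquad t\in[K,K+1].
\]
Since $G(K)\leq 10\eps_0^2$ and $G$ is continuous (the solution lies in $C([K,K+1];\mathcal{H}^1(\m{R}))$), the standard bootstrap closes for $\eps_0$ small and yields $F(t)\leq\sqrt{11}\,\eps_0$ throughout $[K,K+1]$. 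With this uniform bound, $|E^{(1)}(t)-E^{(2)}(t)|\leq C\eps_0^3$ holds at every $t$ in the interval, and likewise at every earlier integer point treated in previous induction steps.

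Finally I would telescope. For $t\in[K,K+1]$ I decompose
\[
E^{(1)}(t)-E^{(1)}(0)=\big[E^{(1)}(t)-E^{(2)}(t)\big]+\sum_{j=0}^{K-1}\big[E^{(2)}(j+1)-E^{(2)}(j)\big]+\big[E^{(2)}(t)-E^{(2)}(K)\big]+\big[E^{(2)}(0)-E^{(1)}(0)\big].
\]
The two endpoint discrepancies are each $O(\eps_0^3)$ by the previous step, while the $K+1$ energy increments are each $O(N^{-\beta}\eps_0^4)$; summing gives exactly
\[
|E^{(1)}(t)|\leq|E^{(1)}(0)|+B_2\eps_0^3+B_2(K+1)N^{-\beta}\eps_0^4 ,
\]
which is (\ref{bdd_fmEmid}) with $B_2=B_2(s)$. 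To close the induction for (\ref{bdd_fmEini}), I evaluate at $t=K+1$ and use $K+1\leq N^{\beta}$, so $(K+1)N^{-\beta}\leq 2$; together with (\ref{E1_fi_bdd}) this yields $|E^{(1)}(K+1)|\leq 6\eps_0^2+B_2\eps_0^3+2B_2\eps_0^4\leq 8\eps_0^2$ once $\eps_0$ is small, i.e. once $C_*\geq B_1$ for a suitable $B_1(s)$. The hardest part is precisely the calibration of these constants in the right order—fixing $B_1,B_2$ from $s$ alone and only then shrinking $\eps_0$—so that the accumulated and endpoint errors never overwhelm the quadratic energy budget over the full $\sim N^\beta$ steps.
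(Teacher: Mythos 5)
Your proposal is correct and follows essentially the same route as the paper's proof: a strong induction over unit time intervals that restarts Proposition \ref{Prop, ls} at each integer time (using $F(K)\lesssim\eps_0\leq\eps_*$), telescopes the $E^{(2)}$ increments via Proposition \ref{Prop, growth_E2}, controls the $E^{(1)}$--$E^{(2)}$ discrepancy by Lemma \ref{Lemma, diff_E1_E2} combined with a continuity/bootstrap argument, and calibrates $B_1, B_2$ before shrinking $\eps_0$. The only cosmetic difference is that you run the bootstrap on $G(t)=F^2(t)$ whereas the paper runs the equivalent ODE-type inequality directly on $|E^{(1)}(t)|$; the two are interchangeable via Corollary \ref{Cor, E1 and H1}.
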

\begin{proof}[Proof of Claim \ref{Claim, ind_key}]
We will use induction to justify this claim.

{\textbf Step 1.} We first discuss the case when $K=0$. In this case, it follows from (\ref{E1_fi_bdd}) that $|E^{(1)}(0)| \leq 6\eps_0^2$, so (\ref{bdd_fmEini}) is satisfied. In order to verify (\ref{bdd_fmEmid}), we would like to quantify how fast $E^{(1)}(t)$ can increase on $[0,1]$. By triangle inequality, for any $t\in [0,1]$,
\be\label{tri_ineq}
	| E^{(1)}(t) - E^{(1)}(0) | \leq | E^{(1)}(t) - E^{(2)}(t) | + | E^{(2)}(t) - E^{(2)}(0) | + | E^{(2)}(0) - E^{(1)}(0) |.
\ee
Next, we will estimate the three terms on the right hand side of (\ref{tri_ineq}). For the sake of tracking constants more precisely, we fix the constant notation $C$ in Proposition \ref{Prop, ls}, Proposition \ref{Prop, growth_E2} and Lemma \ref{Lemma, diff_E1_E2} to be $C_1$, $C_2$ and $C_3$ respectively. Since $b \in (\frac12, 1)$ and $\b \in (\frac12, 1)$ have already been fixed, the above mentioned constants only depend on $s$. 

Based on these notations, it follows from Proposition \ref{Prop, ls} that 
\[
	\|I_1 u\|_{X^{1}_{1,b} (\m{R} \times [0, 1])} + \|I_2 v\|_{X^{4}_{1,b} (\m{R} \times [0, 1])} \leq 
	C_1 F(0) \leq C_1 \eps_0,
\]
where the last inequality is due to (\ref{H1_bdd_I}). Then by applying Proposition \ref{Prop, growth_E2} with $T=0$ and $\delta = t \in (0,1]$ yields 
\[
	|E^{(2)}(t) - E^{(2)}(0)| \leq C_2 N^{-\b} \big( \|I_1 u\|_{X^{1}_{1,b} (\m{R} \times [0, 1])} + \|I_2 v\|_{X^{4}_{1,b} (\m{R} \times [0, 1])}  \big)^{4}.
\]
Combining the above two estimates gives 
\be\label{E2ge1}
	|E^{(2)}(t) - E^{(2)}(0)| \leq C_1^4 C_2 N^{-\b} \eps_0^4.
\ee
On the other hand, thanks to Lemma \ref{Lemma, diff_E1_E2}, we find 
\be\label{diff_E12_init}
	|E^{(2)}(0) - E^{(1)}(0)| \leq C_3 \|I_1 u^{\lam}_0\|_{H^1(\m{R})} \|I_2 v^{\lam}_0\|_{H^1(\m{R})}^2 \leq C_3 F^3(0) \leq C_3 \eps_0^3,
\ee
and 
\[
	|E^{(2)}(t) - E^{(1)}(t)| \leq C_3 \|I_1 u^{\lam}(\cdot, t)\|_{H^1(\m{R})} \|I_2 v^{\lam}(\cdot, t)\|_{H^1(\m{R})}^2 \leq C_3 F^{3}(t).
\]
According to (\ref{H1_ls_fmE}), 
\[ 
	F^{3}(t) \leq \big( E^{(1)}(t) + 2\eps_0^2 \big)^{3/2} \leq 4 |E^{(1)}(t)|^{3/2} + 8 \eps_0^3.
\]
Therefore, 
\be\label{diff_E12_t}
	|E^{(2)}(t) - E^{(1)}(t)| \leq 4 C_3 |E^{(1)}(t)|^{3/2} + 8 C_3 \eps_0^3.
\ee

Combining (\ref{tri_ineq})--(\ref{diff_E12_t}) leads to 
\[
	| E^{(1)}(t) - E^{(1)}(0) | \leq C_1^4 C_2 N^{-\b} \eps_0^4 + 9 C_3 \eps_0^3 + 4 C_3 |E^{(1)}(t)|^{3/2},
\]
which implies 
\be\label{E1bdd_init}
	|E^{(1)}(t)| \leq |E^{(1)}(0)| + C_4\big( N^{-\b} \eps_0^4 + \eps_0^3 + |E^{(1)}(t)|^{3/2} \big), \quad \forall\, t\in[0,1],
\ee
where $C_{4}:=\max\{C_1^4 C_2, 9C_3\}$. Since $|E^{(1)}(0)| \leq 6\eps_0^2$ and $\eps_0 < 1$, $E^{(1)}(t)$ satisfies the following ODE inequality:
\be\label{E1ODE_init}\left\{\begin{array}{l}
|E^{(1)}(t)| \leq 6\eps_0^2 + 2 C_4 \eps_0^3 + C_4 |E^{(1)}(t)|^{3/2},  \quad \forall\, t\in[0,1], \vspace{0.05in}\\
|E^{(1)}(0)| \leq 6\eps_0^2.
\end{array}\right.\ee
Meanwhile, since $I_1 u \in X_{1,b}^{1}(\m{R}\times[0,1]) \subset C([0,1]; H^{1}(\m{R}))$ and $I_2 v \in X_{1,b}^{4}(\m{R}\times[0,1]) \subset C([0,1]; H^{1}(\m{R}))$, then we know $E^{(1)}(t)$ is continuous on $[0,1]$. By taking $\eps_0 \leq \frac{1}{10C_4}$, that is 
\be\label{choice_Cstar_1}
	C_{*} \geq 10 C_4 \eps_*,
\ee
then $6\eps_0^2 + 2 C_4 \eps_0^3 + C_4 |9\eps_0^2|^{3/2} < 9\eps_0^2$,
which combines with (\ref{E1ODE_init}) and the continuity of $E^{(1)}(t)$ yields 
\be\label{E1bdd_init_rough}
	|E^{(1)}(t)| \leq 9 \eps_0^2, \quad \forall\, t\in[0,1].
\ee
Substituting (\ref{E1bdd_init_rough}) to the right hand side of (\ref{E1bdd_init}) leads to
\[
	|E^{(1)}(t)| \leq |E^{(1)}(0)| + 30 C_4 \eps_0^3 + C_4 N^{-\b} \eps_0^4, \quad \forall\, t\in[0,1].
\]
This verifies (\ref{bdd_fmEmid}) for $K=0$ as long as $B_2$ is chosen such that 
\be\label{choice_B2_1}
	B_2 \geq 30 C_4.
\ee

{\textbf Step 2.} Next, as a typical induction argument, by assuming there is some integer $m$ in $[1, N^{\b}-1 ]$ such that $T_{\lam}\geq m$, and both (\ref{bdd_fmEini}) and (\ref{bdd_fmEmid}) hold for all integers $K$ in $[0, m-1]$, we aim to prove that $T_{\lam}\geq m+1$, and both (\ref{bdd_fmEini}) and (\ref{bdd_fmEmid}) hold for $K=m$.

Firstly, since (\ref{bdd_fmEmid}) holds for $K=m-1$, then by putting $t=m$ in (\ref{bdd_fmEmid}) leads to 
\[
	|E^{(1)}(m)| \leq |E^{(1)}(0)| + B_2 \eps_0^3  + B_2 m N^{-\b} \eps_0^4 \leq 6\eps_0^2 + 2B_2 \eps_0^3.
\]
By choosing $\eps_0 \leq \frac{1}{B_2}$, i.e. 
\be\label{choice_Cstar_2}
	C_* \geq B_2 \eps_*,
\ee
we find $|E^{(1)}(m)| \leq 8\eps_0^2$, which verifies (\ref{bdd_fmEini}) for $K=m$. Moreover, it then follows from (\ref{H1_ls_fmE}) that $F^2(m) \leq E^{(1)}(t) + 2\eps_0^2 \leq 10\eps_0^2$. By choosing $\eps_0 < \eps_*/4$, i.e. 
\be\label{choice_Cstar_3}
	C_* > 4,
\ee
then $F(m) \leq 4\eps_0\leq \eps_*$. By regarding $(u^{\lam}, v^{\lam})(\cdot, m)$ as the initial data, it then follows from Proposition \ref{Prop, ls} that the lifespan of $(u^{\lam}, v^{\lam})$ can be extended by at least $1$, i.e. $T_{\lam} \geq m+1$, and 
\be\label{Frn_ind}
	\|I_1 u^{\lam}\|_{X^{1}_{1,b} (\m{R} \times [m, m+1])} + \|I_2 v^{\lam}\|_{X^{4}_{1,b} (\m{R} \times [m, m+1])} \leq C_1 F(m) \leq 4 C_1 \eps_0. 
\ee

It remains to justify (\ref{bdd_fmEmid}) for $K=m$ by estimating $E^{(1)}(t)$ for $t\in[m,m+1]$. Again, by triangle inequality, for any $t\in[m,m+1]$,
\be\label{tri_ineq_2}
	| E^{(1)}(t) - E^{(1)}(0) | \leq | E^{(1)}(t) - E^{(2)}(t) | + | E^{(2)}(t) - E^{(2)}(0) | + | E^{(2)}(0) - E^{(1)}(0) |.
\ee
Similar to the derivation of (\ref{diff_E12_init}) and (\ref{diff_E12_t}), we have 
\begin{align}
	|E^{(2)}(0) - E^{(1)}(0)| &\leq C_3 \eps_0^3,  \label{diff_E12_init_ind} \\
	|E^{(2)}(t) - E^{(1)}(t)| &\leq 4 C_3 |E^{(1)}(t)|^{3/2} + 8 C_3 \eps_0^3. \label{diff_E12_t_ind}
\end{align}

The estimate for the term $|E^{(2)}(t) - E^{(2)}(0)|$ is more complicated. Since $t\in[m,m+1]$, by splitting the interval $[0,t]$ into $m+1$ pieces, we obtain
\be\label{diff_E2_dec}
	|E^{(2)}(t) - E^{(2)}(0)| \leq |E^{(2)}(t) - E^{(2)}(m)| + \sum_{j=0}^{m-1} |E^{(2)}(j+1) - E^{(2)}(j)|.
\ee
Combining Proposition \ref{Prop, growth_E2} ($T=m$ and $\delta = t-m \leq 1$) with (\ref{Frn_ind}) yields
\be\label{diff_E2_tail}\begin{split}
	|E^{(2)}(t) - E^{(2)}(m)| &\leq C_2 N^{-\beta} \big( \| I_1 u^{\lam} \|_{X^{1}_{1,b} (\m{R} \times [m, m+1])} + \| I_2 v^{\lam} \|_{X^{4}_{1,b} (\m{R} \times [m, m+1])} \big)^4, \\
	&\leq 256 C_1^4 C_2 N^{-\b} \eps_0^4.
\end{split}\ee
Next, for any $j=0,1,\cdots, m-1$, it also follows from Proposition \ref{Prop, growth_E2} that 
\[
	|E^{(2)}(j+1) - E^{(2)}(j)| \leq C_2 N^{-\beta} \big( \| I_1 u^{\lam} \|_{X^{1}_{1,b} (\m{R} \times [j, j+1])} + \| I_2 v^{\lam} \|_{X^{4}_{1,b} (\m{R} \times [j, j+1])} \big)^4.
\]
Meanwhile, since $(\ref{bdd_fmEini})$ holds for $K=j$, then $|E^{1}(j)| \leq 8\eps_0^2$, which can be combined with (\ref{H1_ls_fmE}) to deduce $F^2(j) \leq |E^{(1)}(j)| + 2\eps_0^2 \leq 10\eps_0^2$. Thus, $F(j)\leq 4\eps_0 \leq \eps_*$ due to (\ref{choice_Cstar_3}), which enables one to apply Proposition \ref{Prop, ls} by regarding $(u^{\lam}, v^{\lam})(\cdot, j)$ as the initial data to conclude
\[
	\|I_1 u^{\lam}\|_{X^{1}_{1,b} (\m{R} \times [j, j+1])} + \|I_2 v^{\lam}\|_{X^{4}_{1,b} (\m{R} \times [j, j+1])} \leq C_1 F(j) \leq 4C_1\eps_0.                             
\]
Therefore, 
\[
	|E^{(2)}(j+1) - E^{(2)}(j)| \leq C_2 N^{-\beta} (4C_1\eps_0)^4 = 256 C_1^4 C_2 N^{-\b} \eps_0^4.
\]
Noticing this upper bound is independent of $j$, so 
\be\label{diff_E2_mid}
	\sum_{j=0}^{m-1} |E^{(2)}(j+1) - E^{(2)}(j)| \leq m\big(256 C_1^4 C_2 N^{-\b} \eps_0^4\big).
\ee
Plugging (\ref{diff_E2_tail}) and (\ref{diff_E2_mid}) into (\ref{diff_E2_dec}) gives
\be\label{diff_E2}
	|E^{(2)}(t) - E^{(2)}(0)| \leq 256 C_1^4 C_2 (m+1) N^{-\b} \eps_0^4.
\ee

Substituting (\ref{diff_E12_init_ind}), (\ref{diff_E12_t_ind}) and (\ref{diff_E2}) into (\ref{tri_ineq_2}) yields 
\[
	| E^{(1)}(t) - E^{(1)}(0) | \leq 4 C_3 |E^{(1)}(t)|^{3/2} + 9 C_3 \eps_0^3 + 256 C_1^4 C_2 (m+1) N^{-\b} \eps_0^4, 
\]
which implies 
\be\label{E1bdd_iter}
|E^{(1)}(t)| \leq |E^{(1)}(0)| + C_5\big[(m+1) N^{-\b} \eps_0^4 + \eps_0^3 + |E^{(1)}(t)|^{3/2} \big], \quad \forall\, t\in[m,m+1],
\ee
where $C_{5}:=\max\{ 256 C_1^4 C_2, 9C_3\}$. Recalling that 
\[|E^{(1)}(0)| \leq 6\eps_0^2, \quad m\leq N^{\b} - 1, \quad  |E^{(1)}(m)| \leq 8\eps_0^2,\] 
so we obtain the following ODE inequality for $E^{(1)}(t)$ on $[m,m+1]$:
\be\label{E1ODE_iter}\left\{\begin{array}{l}
	|E^{(1)}(t)| \leq 6\eps_0^2 + 2 C_5 \eps_0^3 + C_5 |E^{(1)}(t)|^{3/2},  \quad \forall\, t\in[m,m+1], \vspace{0.05in}\\
	|E^{(1)}(m)| \leq 8\eps_0^2.
\end{array}\right.\ee
By taking $\eps_0 \leq \frac{1}{10C_5}$, that is 
\be\label{choice_Cstar_4}
	C_{*} \geq 10 C_5 \eps_*,
\ee
then $6\eps_0^2 + 2 C_5 \eps_0^3 + C_5 |9\eps_0^2|^{3/2} < 9\eps_0^2$,
which combines with (\ref{E1ODE_iter}) and the continuity of $E^{(1)}(t)$ yields 
\be\label{E1bdd_iter_rough}
|E^{(1)}(t)| \leq 9 \eps_0^2, \quad \forall\, t\in[m,m+1].
\ee
Substituting (\ref{E1bdd_iter_rough}) to the right hand side of (\ref{E1bdd_iter}) leads to
\[
|E^{(1)}(t)| \leq |E^{(1)}(0)| + 30 C_5 \eps_0^3 + C_5 (m+1) N^{-\b} \eps_0^4, \quad \forall\, t\in[m,m+1].
\]
This verifies (\ref{bdd_fmEmid}) for $K=m$ as long as $B_2$ is chosen such that 
\be\label{choice_B2_2}
	B_2 \geq 30 C_5.
\ee

As a summary, according to (\ref{choice_B2_1}) and (\ref{choice_B2_2}), we choose $B_2$ such that 
\[
	B_2 = \max\{ 30 C_4, 30 C_5 \} = 30 C_5.
\]
Then based on (\ref{choice_Cstar_1}), (\ref{choice_Cstar_2}), (\ref{choice_Cstar_3}) and (\ref{choice_Cstar_4}), we choose $C_{*}$ such that 
\[
	C_{*} \geq \max\{ 10 C_4\eps_*, B_2 \eps_*, 4, 10 C_5 \eps_* \} = \max\{ B_2\eps_*, 4 \} := B_1.
\]
Since $C_5$ and $\eps_*$ only depend on $s$, the constants $B_1$ and $B_2$ also only depend on $s$. As a standard induction procedure, the combination of Step 1 and Step 2 justifies Claim \ref{Claim, ind_key}. 
\end{proof}
Once Claim \ref{Claim, ind_key} is established, the proof of Theorem \ref{Thm, main} is also finished.
\end{proof}

\appendix

\section{Bilinear estimates for local solutions}
In this section, we provide bilinear estimates which were used in the proof of Proposition \ref{Prop, ls}. These bilinear estimates involve $I$-operators and can be regarded as generalizations of classical bilinear estimates as discussed in e.g. \cite{Oh09, YZ22a}. 
Moreover, we present this result in a more general setting by allowing $\sigma_1$ to be positive while we only need the case of $\sigma_1 = 0$ in the proof of Proposition \ref{Prop, ls}.

\begin{lemma}\label{Lemma, bl_est_I}
Let $\frac34\leq s < 1$, $\frac12 < b \leq 1$ and $\sigma_1 \leq 1-b$. Suppose $u\in X_{s,b}^{1}(\m{R}^2)$ and $v\in X_{s,b}^{4}(\m{R}^2)$. Then the following bilinear estimates hold.
\begin{eqnarray}
	\| I_1(v v_x)\|_{X_{1, b-1+\sigma_1}^{1}(\m{R}^2)} &\leq& C \|I_2 v\|_{X_{1,b}^{4}(\m{R}^2)}^2, \label{bl_est_I1}\\
	\| I_2 [(u v)_{x}]\|_{X_{1, b-1+\sigma_1}^{4}(\m{R}^2)} &\leq& C \|I_1 u\|_{X_{1,b}^{1}} \|I_2 v\|_{X_{1,b}^{4}(\m{R}^2)}, \label{bl_est_I2}
\end{eqnarray}
where $C$ is a constant only depending on $s$ and $b$. 
\end{lemma}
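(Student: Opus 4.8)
The plan is to reduce both estimates to the corresponding classical ($I$-free) bilinear estimates at regularity $s$, which are the genuinely analytic input encoding the resonant structure of the $\a=4$ system and are already available from \cite{YZ22a}, via a purely pointwise comparison of Fourier multipliers. The hypothesis $\sigma_1\leq 1-b$ guarantees $b-1+\sigma_1\leq 0$, so the output modulation weight stays in the standard (non-positive) range, and the classical estimates
\[
\|v v_x\|_{X^{1}_{s, b-1+\sigma_1}} \ls \|v\|_{X^{4}_{s,b}}^2, \qquad \|(uv)_x\|_{X^{4}_{s,b-1+\sigma_1}} \ls \|u\|_{X^{1}_{s,b}} \|v\|_{X^{4}_{s,b}}
\]
are applicable. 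First I would recast these in a multiplier-free, positive form: writing $\wh{v v_x}(\xi,\tau)=\frac{i\xi}{2}(\wh v * \wh v)(\xi,\tau)$ and similarly for $(uv)_x$, and replacing every space-time Fourier transform by its modulus, each classical estimate becomes an inequality for a positive bilinear form acting on nonnegative $L^2$ functions. Since such a form is monotone in its kernel, an $I$-version follows from the classical one as soon as the $I$-kernel is pointwise dominated by the classical kernel.

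Second, I would extract the relevant multiplier ratio. Because $I_1$ and $I_2$ alter only the spatial-frequency weights and leave the modulation weights $\la\tau-\xi^3\ra$ and $\la\tau-4\xi^3\ra$ untouched, passing from the classical norms (weights $\la\xi\ra^{s}$ on the output and $\la\xi_j\ra^{s}$ on the inputs) to the $I$-norms (weights $\la\xi\ra m_1(\xi)$, resp. $\la\xi_j\ra m_2(\xi_j)$) multiplies the classical kernel for (\ref{bl_est_I1}) by exactly
\[
R_1(\xi_1,\xi_2) = \frac{\la \xi \ra^{1-s} m_1(\xi)}{\la \xi_1\ra^{1-s} m_2(\xi_1)\, \la \xi_2\ra^{1-s} m_2(\xi_2)}, \qquad \xi=\xi_1+\xi_2,
\]
and for (\ref{bl_est_I2}) by the analogue in which the output factor $m_1(\xi)$ is replaced by $m_2(\xi)$ and one input factor $m_2(\xi_1)$ by $m_1(\xi_1)$. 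Thus both estimates reduce to the single pointwise claim $R_1\ls 1$ (and its variant), i.e.\ to
\[
\la \xi \ra^{1-s} m_1(\xi) \ls \la \xi_1\ra^{1-s} m_2(\xi_1)\, \la \xi_2\ra^{1-s} m_2(\xi_2), \qquad \xi=\xi_1+\xi_2.
\]

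The third step is the elementary verification of this multiplier bound. From (\ref{m1}) and (\ref{m2_fix}) one checks directly that, with constants depending only on $s$ (the factor $2^{s-1}$ from $\rho_*=2$ being harmless for $s\in[\frac34,1)$),
\[
\la \xi \ra^{1-s} m_1(\xi) \sim \min(\la\xi\ra, N)^{1-s}, \qquad \la \xi \ra^{1-s} m_2(\xi) \sim \min(\la\xi\ra, N)^{1-s};
\]
indeed the multiplier equals $\la\xi\ra^{1-s}$ for $|\xi|\ls N$, equals $\sim N^{1-s}$ for $|\xi|\gs N$, and $m_j$ interpolates smoothly between. The claim then follows from the elementary inequality $\min(ab,N)\leq \min(a,N)\min(b,N)$ for $a,b\geq 1$ (applied with $a=\la\xi_1\ra$, $b=\la\xi_2\ra$), together with $\la\xi\ra=\la\xi_1+\xi_2\ra\leq \la\xi_1\ra\la\xi_2\ra$, after raising to the power $1-s\geq 0$. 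The variant needed for (\ref{bl_est_I2}) is identical. Combining the pointwise domination $R_1\ls 1$ with the monotonicity of the positive bilinear forms yields (\ref{bl_est_I1}) directly from its classical counterpart, and likewise (\ref{bl_est_I2}).

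I do not expect a serious obstacle here: all the hard harmonic analysis, including the treatment of the resonance of the $\a=4$ system, is already packaged into the classical bilinear estimates, and the $I$-upgrade carried out above is elementary. The one point requiring care is the passage to the positive-kernel formulation: one must ensure that the cited classical estimate holds in (or implies) the form in which all Fourier transforms are replaced by their moduli. This is standard, since such KdV-type bilinear estimates are established through dyadic decomposition and Cauchy--Schwarz arguments that bound the bilinear expression by its absolute value throughout, so the positive-kernel reformulation is automatic.
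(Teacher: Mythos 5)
Your proposal is correct in outline, and its analytic core coincides with the paper's: the pointwise domination $\la\xi\ra^{1-s}m_1(\xi) \lesssim \la\xi_1\ra^{1-s}m_2(\xi_1)\,\la\xi_2\ra^{1-s}m_2(\xi_2)$ (and the variant with the roles of $m_1,m_2$ exchanged) is exactly the inequality the paper uses, stated there as the bound $\la\xi_3\ra^{1-s}m_2(\xi_3)\leq C\,\la\xi_1\ra^{1-s}m_1(\xi_1)\la\xi_2\ra^{1-s}m_2(\xi_2)$ after passing to the dual trilinear form; your verification via $\la\xi\ra^{1-s}m_j(\xi)\sim\min(\la\xi\ra,N)^{1-s}$ and submultiplicativity of $\min(\cdot,N)$ is a clean proof of what the paper merely asserts in passing. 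Where you diverge is in what happens after the $I$-weights are stripped. The paper does not cite the $I$-free estimate: it reduces further to $s=\frac34$ by monotonicity and then proves the resulting positive-kernel trilinear estimate from scratch (duality, a three-case splitting according to $|\xi_3|\leq 1$, the balanced regime, and $\min\{|\xi_1|,|\xi_2|\}<\frac18|\xi_3|$, with the key $\xi_1$-integrals controlled by Lemma 5.4 of \cite{YZ22a}). You instead invoke the classical bilinear estimates of \cite{YZ22a} as a black box in positive-kernel form. That is shorter, but it carries two citation burdens, only the second of which you address: (i) the cited estimate must hold for the full parameter range claimed by the lemma --- every $b\in(\frac12,1]$ with left modulation exponent $b-1+\sigma_1$, not just one $b$ near $\frac12$ fixed in the local theory --- and there is no trivial monotonicity in $b$ allowing a transfer from small $b$ to larger $b$, since both sides of the estimate strengthen as $b$ grows (the paper's main theorem fixes $b=\frac34$); (ii) the stated bilinear estimate does not formally imply its absolute-value (positive-kernel) strengthening, so one must appeal to the structure of the proofs in \cite{YZ22a} rather than to their statements. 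These gaps are of citation precision rather than of mathematical substance --- the proofs in \cite{YZ22a} are of Kenig--Ponce--Vega type and do bound everything by moduli --- and they are precisely why the paper opts for a self-contained rederivation; granting them, your argument is sound and buys brevity at the cost of leaning harder on the external reference.
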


\begin{proof}
The proofs for (\ref{bl_est_I1}) and (\ref{bl_est_I2}) are similar, so we will only justify (\ref{bl_est_I2}) which is more challenging since $u$ and $v$ are not symmetric in (\ref{bl_est_I2}). 
The argument is quite standard which originated from previous works, see e.g. \cite{KPV96, Tao01, Oh09, YZ22a, CKSTT03}, so we will only sketch the key steps. Firstly, according to the definition of Fourier restriction spaces,
\[
\|I_1 u\|_{X_{1,b}^{1}(\m{R}^2)} = \|f_1\|_{L^2_{\xi\tau}(\m{R}^2)}, \quad \|I_2 v\|_{X_{1,b}^{4}(\m{R}^2)} = \|f_2\|_{L^2_{\xi\tau}(\m{R}^2)},
\]
where 
\be\label{fwu}
	f_1(\xi,\tau) = \la\xi\ra\la\tau - \xi^3\ra^{b} m_1(\xi) \wh{u}(\xi,\tau), \quad f_2(\xi,\tau) = \la\xi\ra \la\tau - 4\xi^3\ra^{b} m_2(\xi) \wh{v}(\xi,\tau).
\ee
Meanwhile, 
\[
\| I_2 [(u v)_{x}] \|_{X_{1, b-1+\sigma_1}^{4}(\m{R}^2)} = \| \la\xi\ra \la\tau - 4\xi^3\ra^{b-1+\sigma_1} m_2(\xi) \xi (\wh{u}*\wh{v})(\xi,\tau) \|_{L^2_{\xi\tau}(\m{R}^2)}, 
\]
where $\wh{u}$ and $\wh{v}$ denote the space-time Fourier transforms of $u$ and $v$ respectively, and $\wh{u}*\wh{v}$ means the space-time convolution between $\wh{u}$ and $\wh{v}$.
Then by duality, 
\[\begin{split}
& \| I_2 [(u v)_{x}]\|_{X_{1, b-1+\sigma_1}^{4}} \\
=\;\;& \sup_{f_3\in L^2_{\xi\tau}\setminus \{0\}} \| \la\xi\ra \la\tau - 4\xi^3\ra^{b-1+\sigma_1} m_2(\xi) \xi (\wh{u}*\wh{v})(\xi,\tau) f_3(-\xi, -\tau)\|_{L^1_{\xi\tau}} / \|f_3\|_{L^2_{\xi\tau}}.
\end{split}\]
Since 
\[
	(\wh{u}*\wh{v})(\xi,\tau) = \int_{\m{R}}\int_{\m{R}} \wh{u}(\xi_1, \tau_1) \wh{v}(\xi-\xi_1, \tau-\tau_1) \,d\xi_1 \,d\tau_1,
\]
then by the change of variables: 
\[
	(\xi_1, \xi_2, \xi_3) := (\xi_1, \xi-\xi_1, -\xi) \in \Gamma_3 \quad\text{and}\quad 
	(\tau_1, \tau_2, \tau_3) := (\tau_1, \tau-\tau_1, -\tau) \in \Gamma_3,
\] 
we find
\be\label{temp1}\begin{split}
	& \| \la\xi\ra \la\tau - 4\xi^3\ra^{b-1+\sigma_1} m_2(\xi) \xi (\wh{u}*\wh{v})(\xi,\tau) f_3(-\xi, -\tau)\|_{L^1_{\xi\tau}} \\
	=\,\, & \iint_{\Gamma_3\times \Gamma_3} \big| \la\xi_3\ra \la\tau_3 - 4\xi_3^3\ra^{b-1+\sigma_1} m_2(\xi_3) \xi_3 \wh{u}(\xi_1,\tau_1) \wh{v}(\xi_2,\tau_2) f_3(\xi_3, \tau_3) \big| \,d\sigma_{\xi} \,d\sigma_\tau,
\end{split}\ee
where $d\sigma_\xi$ and $d\sigma_\tau$ refer to the surface integrals for $(\xi_1,\xi_2,\xi_3)\in\Gamma_3$ and $(\tau_1,\tau_2,\tau_3)\in\Gamma_3$ respectively.
According to (\ref{fwu}), we change $u$ and $v$ to $f_1$ and $f_2$ to obtain
\[
	\text{RHS of (\ref{temp1})} = \iint_{\Gamma_3\times \Gamma_3} \frac{ |\xi_3| \la\xi_3\ra m_2(\xi_3) \prod_{i=1}^{3} |f_{i}(\xi_i,\tau_i)| }{\la\xi_1\ra \la\xi_2\ra m_1(\xi_1) m_2(\xi_2) \la L_1\ra^{b} \la L_2\ra^{b} \la L_3\ra^{1-b-\sigma_1}} \,d\sigma_{\xi} \,d\sigma_\tau,
\]
where 
\[ L_1 = \tau_1 - \xi_1^3, \quad L_2 = \tau_2 - 4\xi_2^3, \quad L_3 = \tau_3 - 4\xi_3^3. \]
Hence, in order to verify (\ref{bl_est_I2}), it suffices to prove that
\be\label{bl_est_I_duality1}
\iint_{\Gamma_3\times \Gamma_3} \frac{ |\xi_3| \la\xi_3\ra m_2(\xi_3) \prod_{i=1}^{3} |f_{i}(\xi_i,\tau_i)| }{\la\xi_1\ra \la\xi_2\ra m_1(\xi_1) m_2(\xi_2) \la L_1\ra^{b} \la L_2\ra^{b} \la L_3\ra^{1-b-\sigma_1}} \,d\sigma_{\xi} \,d\sigma_\tau \leq C \prod_{i=1}^{3} \|f_i\|_{L^2_{\xi\tau}}.
\ee
Noting 
\[
	\frac{ |\xi_3| \la\xi_3\ra m_2(\xi_3) }{\la\xi_1\ra \la\xi_2\ra m_1(\xi_1) m_2(\xi_2) } 
	= \frac{ \la\xi_3\ra^{1-s} m_2(\xi_3)}{ [\la\xi_1\ra^{1-s} m_1(\xi_1) ] [\la\xi_2\ra^{1-s} m_2(\xi_2)]} \frac{|\xi_3| \la\xi_3\ra^{s}}{\la\xi_1\ra^{s} \la\xi_2\ra^{s}} 
	\leq C \frac{|\xi_3| \la\xi_3\ra^{s}}{\la\xi_1\ra^{s} \la\xi_2\ra^{s}},
\]
where the last inequality is due to $|\xi_3| \leq 2\max\{|\xi_1|, |\xi_2|\}$ and Lemma \ref{Lemma, m1p2}, it boils down to proving
\[
\iint_{\Gamma_3\times \Gamma_3} \frac{|\xi_3| \la\xi_3\ra^{s} \prod_{i=1}^{3} |f_{i}(\xi_i,\tau_i)| }{\la\xi_1\ra^{s} \la\xi_2\ra^{s} \la L_1\ra^{b} \la L_2\ra^{b} \la L_3\ra^{1-b-\sigma_1}} \,d\sigma_{\xi} \,d\sigma_\tau \leq C \prod_{i=1}^{3} \|f_i\|_{L^2_{\xi\tau}}.
\]
Since $\sum_{i=1}^{3}\xi_i = 0$, then $\la\xi_3\ra \leq \la\xi_1\ra\la\xi_2\ra$ and $\frac{\la\xi_3\ra^{s}}{\la\xi_1\ra^{s}\la\xi_2\ra^{s}}$ is a decreasing function in $s$. For  $\frac34\leq s < 1$, it suffices to consider the case when $s=\frac34$, that is to show 
\be\label{bl_est_I_duality2}
\iint_{\Gamma_3\times \Gamma_3} \frac{ |\xi_3| \la\xi_3\ra^{\frac34} \prod_{i=1}^{3} |f_{i}(\xi_i,\tau_i)| }{\la\xi_1\ra^{\frac34} \la\xi_2\ra^{\frac34} \la L_1\ra^{b} \la L_2\ra^{b} \la L_3\ra^{1-b-\sigma_1}} \,d\sigma_{\xi} \,d\sigma_\tau \leq C \prod_{i=1}^{3} \|f_i\|_{L^2_{\xi\tau}}.
\ee

\begin{itemize}
	\item \textbf{Case 1:} $|\xi_3| \leq 1$. \\
	Let $A_1 = \{(\vec{\xi}, \vec{\tau})\in \Gamma_3\times\Gamma_3: |\xi_3|\leq 1\}$, where $\vec{\xi} := (\xi_1, \xi_2, \xi_3)$ and $\vec{\tau} := (\tau_1, \tau_2, \tau_3)$. Then it reduces to deduce that
	\[
	\iint_{A_1} \frac{\prod_{i=1}^{3} |f_{i}(\xi_i,\tau_i)| }{\la L_1\ra^{b} \la L_2\ra^{b} \la L_3\ra^{1-b-\sigma_1}} \,d\sigma_{\xi} \,d\sigma_\tau \leq C \prod_{i=1}^{3} \|f_i\|_{L^2_{\xi\tau}}.
	\]
	This estimate is readily justified for any $\frac12 < b \leq 1$ and $\sigma_1 \leq 1-b$, we omit the details.
	
	\item \textbf{Case 2:} $|\xi_3| > 1$ and $\min\{ |\xi_1|, |\xi_2| \} \geq \frac18 |\xi_3|$. \\
	Let $A_2$ denote this region. Then $\frac{ |\xi_3| \la\xi_3\ra^{3/4}}{\la\xi_1\ra^{3/4} \la\xi_2\ra^{3/4}} \ls |\xi_3|^{1/4}$ on $A_2$. So it suffices to prove
	\[
	\iint_{A_2} \frac{ |\xi_3|^{\frac14} \prod_{i=1}^{3} |f_{i}(\xi_i,\tau_i)| }{ \la L_1\ra^{b} \la L_2\ra^{b} \la L_3\ra^{1-b-\sigma_1}} \,d\sigma_{\xi} \,d\sigma_\tau \leq C \prod_{i=1}^{3} \|f_i\|_{L^2_{\xi\tau}}.
	\]
	According to the argument in \cite{KPV96, YZ22a}, it boils down to the following estimate.  
	\be\label{bl_est_I_duality3}
		\sup_{|\xi_3|>1, \tau_3\in\m{R}} \frac{|\xi_3|^{1/4}}{\la L_3\ra^{1-b-\sigma_1}} \bigg( \int_{\m{R}} \frac{d\xi_1}{\la L_1 + L_2\ra^{2b}} \bigg)^{1/2} \leq C.
	\ee
	For fixed $\xi_3$ and $\tau_3$, by writing $\xi_2 = -\xi_1 - \xi_3$ and $\tau_2 = -\tau_1 - \tau_3$, $L_1 + L_2$ can be represented as a function in $\xi_1$:
	\[ 
		L_1 + L_2 = 3\xi_1^3 + 12 \xi_3 \xi_1^2 + 12\xi_3^2 \xi_1 + 4\xi_3^3 - \tau_3 := P_{\xi_3, \tau_3}(\xi_1). 
	\]
	Then it follows from Lemma 5.4 in \cite{YZ22a} that 
	\[\begin{split}
		\int_{\m{R}} \frac{d\xi_1}{\la L_1 + L_2\ra^{2b}} & = \int_{\m{R}} \frac{d\xi_1}{\la P_{\xi_3, \tau_3}(\xi_1)\ra^{2b}} \\
		&\leq C \big\la 3(12\xi_3^2) - (12\xi_3)^2 \big\ra^{-\frac14} \leq C|\xi_3|^{-1/2}.
	\end{split}\]
	As a result, (\ref{bl_est_I_duality3}) is verified since $\sigma_1 \leq 1-b$.

	\item \textbf{Case 3:} $|\xi_3| > 1$ and $\min\{ |\xi_1|, |\xi_2| \} < \frac18 |\xi_3|$. \\
	Let $A_3$ denote this region. Noting $\la\xi_3\ra \leq \la\xi_1\ra \la\xi_2\ra$, so it reduces to show that 
	\[
	\iint_{A_3} \frac{ |\xi_3| \prod_{i=1}^{3} |f_{i}(\xi_i,\tau_i)| }{ \la L_1\ra^{b} \la L_2\ra^{b} \la L_3\ra^{1-b-\sigma_1}} \,d\sigma_{\xi} \,d\sigma_\tau \leq C \prod_{i=1}^{3} \|f_i\|_{L^2_{\xi\tau}}.
	\]
	Again based on the argument in \cite{KPV96, YZ22a}, it suffices to justify
	\be\label{bl_est_I_duality4}
	\sup_{|\xi_3|>1, \tau_3\in\m{R}} \frac{|\xi_3|}{\la L_3\ra^{1-b-\sigma_1}} \bigg( \int_{ \min\{ |\xi_1|, |\xi_2| \} < \frac18 |\xi_3| } \frac{d\xi_1}{\la L_1 + L_2\ra^{2b}} \bigg)^{1/2} \leq C.
	\ee
	Since $L_1 + L_2 = P_{\xi_3, \tau_3}(\xi_1)$ and 
	\[ 
		P_{\xi_3,\tau_3}'(\xi_1) = 9\xi_1^2 + 24\xi_3\xi_1 + 12\xi_3^2 = 3(3\xi_1+2\xi_3)(\xi_1+2\xi_3),
	\]
	then it follows from the restriction $\min\{ |\xi_1|, |\xi_2| \} < \frac18 |\xi_3| $ that $| P_{\xi_3,\tau_3}'(\xi_1) | \sim |\xi_3|^2$. Hence,
	\[\begin{split}
		 & \int_{ \min\{ |\xi_1|, |\xi_2| \} < \frac18 |\xi_3| } \frac{d\xi_1}{\la L_1 + L_2\ra^{2b}} \\
		 \leq\;\; & \frac{C}{|\xi_3|^2} \int_{ \min\{ |\xi_1|, |\xi_2| \} < \frac18 |\xi_3| } \frac{|P_{\xi_3,\tau_3}'(\xi_1)|}{\la P_{\xi_3, \tau_3}(\xi_1) \ra^{2b}} \, d\xi_1 \leq C |\xi_3|^{-2}.
	\end{split}\]
	Plugging the above estimate into (\ref{bl_est_I_duality4}) finishes the proof since $\sigma_1 \leq 1-b$.
\end{itemize}
\end{proof}

\section{Time Derivative of the $\Lambda_3$ operator}
\label{Sec, td_gLam3}
This section aims to verify formula (\ref{gLam_3_td}) which is about the time derivative of the $\Lambda_3(M_3; u,v,v)$ operator. The idea behind the computation is well-known, see e.g. \cite{CKSTT03, Oh09-2}. But here we still carry out the detailed justification for the convenience of readers.

By definition (\ref{eq_Lambda}), 
\[
	\Lambda_3(M_3; u,v,v) = \int_{\Gamma_3} M_{3}(\xi_1, \xi_2, \xi_3) \wh{u}(\xi_1) \wh{v}(\xi_2) \wh{v}(\xi_3) \, d\sigma.
\]
So $\p_{t} \Lam_3(M_3; u,v,v) = I_1 + I_2 + I_3$, where 
\[\left\{\begin{array}{l}
	I_1 =  \int_{\Gamma_{3}} M_{3}(\xi_1, \xi_2, \xi_3) \wh{u}_t(\xi_1) \wh{v}(\xi_2) \wh{v}(\xi_3) \, d\sigma,    \vspace{0.1in} \\
	I_2 =  \int_{\Gamma_{3}} M_{3}(\xi_1, \xi_2, \xi_3) \wh{u}(\xi_1) \wh{v}_{t}(\xi_2) \wh{v}(\xi_3) \, d\sigma,   \vspace{0.1in} \\
	I_3 =   \int_{\Gamma_{3}} M_{3}(\xi_1, \xi_2, \xi_3) \wh{u}(\xi_1) \wh{v}(\xi_2) \wh{v}_{t}(\xi_3) \, d\sigma.
\end{array}\right.\]
We first compute $I_1$. According to (\ref{mb-freq}) where the formula for $\wh{u}_{t}$ is provided, we find
\[
	I_1 = \int_{\Gamma_{3}} M_{3}(\xi_1, \xi_2, \xi_3) \bigg(   
	i \xi_1^3 \wh{u}(\xi_1) - \frac{1}{2} i \xi_1 \int_{ \eta_1 + \eta_2 = \xi_1} \wh{v}(\eta_1) \wh{v}(\eta_2) \,d\sigma
	\bigg) \wh{v}(\xi_2) \wh{v}(\xi_3) \, d\sigma.
\]
We split $I_1$ into two parts: $I_1 = J_{11} + J_{12}$, where 
\begin{align}
	J_{11} &= i \int_{\Gamma_{3}} \xi_1^3 M_{3}(\xi_1, \xi_2, \xi_3) \wh{u}(\xi_1) \wh{v}(\xi_2) \wh{v}(\xi_3) \, d\sigma = i \Lambda_{3}(\xi_1^3 M_3; u, v, v), \label{J11}\\
	J_{12} &= -\frac{i}{2} \int_{\Gamma_{3}} M_{3}(\xi_1, \xi_2, \xi_3) 
	\xi_1 \bigg(\int_{ \eta_1 + \eta_2 = \xi_1} \wh{v}(\eta_1) \wh{v}(\eta_2) \,d\sigma
	\bigg)\wh{v}(\xi_2) \wh{v}(\xi_3) \, d\sigma. \notag
\end{align}
For $J_{12}$, noticing that $\eta_1 + \eta_2 + \xi_2 + \xi_3 = \xi_1 + \xi_2 + \xi_3 = 0$, so by applying the change of variable: 
\[(y_1, y_2, y_3, y_4) := (\eta_1, \xi_2, \xi_3, \eta_2)\in\Gamma_{4},\]
we obtain 
\be\label{J12}
	J_{12} = -\frac{i}{2} \int_{\Gamma_4} (y_1+y_4) M_{3}(y_1+y_4, y_2, y_3) \prod_{i=1}^{4} \wh{v}(y_i) \,d\sigma.
\ee

Next, we compute $I_2$. Again, based on (\ref{mb-freq}) where the formula for $\wh{v}_{t}$ is provided, we find
\[
	I_2 =  \int_{\Gamma_{3}} M_{3}(\xi_1, \xi_2, \xi_3) \wh{u}(\xi_1) \bigg( 
	4 i \xi_2^3 \wh{v}(\xi_2) - i \xi_2 \int_{\eta_1 + \eta_2 = \xi_2} \wh{u}(\eta_1) \wh{v}(\eta_2) \,d\sigma 
	\bigg)\wh{v}(\xi_3) \, d\sigma.
\]
We split $I_2$ into two parts: $I_2 = J_{21} + J_{22}$, where 
\begin{align}
	J_{21} &= 4i \int_{\Gamma_{3}} \xi_2^3 M_{3}(\xi_1, \xi_2, \xi_3) \wh{u}(\xi_1) \wh{v}(\xi_2) \wh{v}(\xi_3) \, d\sigma = 4i \Lambda_{3}(\xi_2^3 M_3; u, v, v), \label{J21} \\
	J_{22} &= -i \int_{\Gamma_{3}} M_{3}(\xi_1, \xi_2, \xi_3) \wh{u}(\xi_1) \xi_2 \bigg( \int_{\eta_1 + \eta_2 = \xi_2} \wh{u}(\eta_1) \wh{v}(\eta_2) \,d\sigma 
	\bigg)\wh{v}(\xi_3) \, d\sigma.  \notag
\end{align}
For $J_{22}$, noticing that $\xi_1 + \eta_1 + \eta_2 + \xi_3 = \xi_1 + \xi_2 + \xi_3 = 0$, so by applying the change of variable: 
\[(y_1, y_2, y_3, y_4) := (\xi_1, \eta_1, \eta_2, \xi_3)\in\Gamma_{4},\]
we obtain 
\be\label{J22}
	J_{22} = -i \int_{\Gamma_{4}} (y_2+y_3) M_{3}(y_1, y_2+y_3, y_4) \wh{u}(y_1)   \wh{u}(y_2) \wh{v}(y_3) \wh{v}(y_4) \, d\sigma.
\ee
Similarly to the computation of $I_2$, we have $I_3 = J_{31} + J_{32}$, where 
\be\label{I3}\begin{split}
	J_{31} &= 4i \int_{\Gamma_{3}} \xi_3^3 M_{3}(\xi_1, \xi_2, \xi_3) \wh{u}(\xi_1) \wh{v}(\xi_2) \wh{v}(\xi_3) \, d\sigma = 4i \Lambda_{3}(\xi_3^3 M_3; u, v, v), \\
	J_{32} &= -i \int_{\Gamma_{4}} (y_2+y_3) M_{3}(y_1, y_4, y_2+y_3) \wh{u}(y_1)   \wh{u}(y_2) \wh{v}(y_3) \wh{v}(y_4) \, d\sigma.
\end{split}\ee

Combining (\ref{J11})--(\ref{I3}) together yields 
\begin{align*}
	\p_{t} \Lam_3(M_3; u,v,v) = & \big(J_{11} + J_{21} + J_{31}\big) + \big(J_{12} + J_{22} + J_{32}\big) \\
	=& \ i \Lam_3\big( \big[ \xi_1^3 + 4\xi_2^3 + 4\xi_3^3 \big]  M_3; u,v,v\big) - \frac{1}{2} i \Lam_{4}\big( [\xi_1+\xi_4] M_3(\xi_1+\xi_4, \xi_2, \xi_3); v \big) \\
	& - i \Lam_{4}\big( [\xi_2+\xi_3] M_3(\xi_1, \xi_2+\xi_3, \xi_4); u,u,v,v \big) \\
	& - i \Lam_{4}\big( [\xi_2+\xi_3] M_3(\xi_1, \xi_4, \xi_2+\xi_3); u,u,v,v \big),
\end{align*}
which justifies the formula (\ref{gLam_3_td}).

\section*{Acknowledgments}
X.Yang is supported by National Natural Science Foundation of China (No. 12401299), Natural Science Foundation of Jiangsu Province (No. BK20241260), Scientific Research Center of Applied Mathematics of Jiangsu Province (No. BK20233002).

{\small
\bibliographystyle{plain}
\bibliography{Ref-Dispersive}

@article {AC08,
    AUTHOR = {Alvarez, B. and Carvajal, X.},
     TITLE = {On the local well-posedness for some systems of coupled {K}d{V} equations},
   JOURNAL = {Nonlinear Anal.},
  FJOURNAL = {Nonlinear Analysis. Theory, Methods \& Applications. An
              International Multidisciplinary Journal},
    VOLUME = {69},
      YEAR = {2008},
    NUMBER = {2},
     PAGES = {692--715},
      ISSN = {0362-546X},
   MRCLASS = {35Q53 (35B30)},
  MRNUMBER = {2426282},
MRREVIEWER = {Zhaohui Huo},
       DOI = {10.1016/j.na.2007.06.009},
       URL = {https://doi.org/10.1016/j.na.2007.06.009},
}

@article {ACW96,
    AUTHOR = {Ash, J. M. and Cohen, J. and Wang, G.},
     TITLE = {On strongly interacting internal solitary waves},
   JOURNAL = {J. Fourier Anal. Appl.},
  FJOURNAL = {The Journal of Fourier Analysis and Applications},
    VOLUME = {2},
      YEAR = {1996},
    NUMBER = {5},
     PAGES = {507--517},
      ISSN = {1069-5869},
   MRCLASS = {35Q53 (35Q35 76B15 76V05)},
  MRNUMBER = {1412066},
MRREVIEWER = {F. Pempinelli},
       DOI = {10.1007/s00041-001-4041-4},
       URL = {https://doi.org/10.1007/s00041-001-4041-4},
}

@article {Bou93a,
	AUTHOR = {Bourgain, J.},
	TITLE = {Fourier transform restriction phenomena for certain lattice
	subsets and applications to nonlinear evolution equations.
	{I}. {S}chr\"{o}dinger equations},
	JOURNAL = {Geom. Funct. Anal.},
	FJOURNAL = {Geometric and Functional Analysis},
	VOLUME = {3},
	YEAR = {1993},
	NUMBER = {2},
	PAGES = {107--156},
	ISSN = {1016-443X},
	MRCLASS = {35Q55 (11L07 35B10)},
	MRNUMBER = {1209299},
	MRREVIEWER = {Yun Mei Chen},
	DOI = {10.1007/BF01896020},
	URL = {https://doi.org/10.1007/BF01896020},
}

@article {BPST92,
	AUTHOR = {Bona, J. L. and Ponce, G. and Saut, J.-C. and
	Tom, M. M.},
	TITLE = {A model system for strong interaction between internal
	solitary waves},
	JOURNAL = {Comm. Math. Phys.},
	FJOURNAL = {Communications in Mathematical Physics},
	VOLUME = {143},
	YEAR = {1992},
	NUMBER = {2},
	PAGES = {287--313},
	ISSN = {0010-3616},
	MRCLASS = {35Q35 (76B25 76V05)},
	MRNUMBER = {1145797},
	MRREVIEWER = {Alan Jeffrey},
	URL = {http://projecteuclid.org/euclid.cmp/1104248957},
}

@article {CCT03,
    AUTHOR = {Christ, M. and Colliander, J. and Tao, T.},
     TITLE = {Asymptotics, frequency modulation, and low regularity
              ill-posedness for canonical defocusing equations},
   JOURNAL = {Amer. J. Math.},
  FJOURNAL = {American Journal of Mathematics},
    VOLUME = {125},
      YEAR = {2003},
    NUMBER = {6},
     PAGES = {1235--1293},
      ISSN = {0002-9327},
   MRCLASS = {35Q53 (35Q55 35R25 37K10 37K15)},
  MRNUMBER = {2018661},
MRREVIEWER = {John Albert},
       URL =
              {http://muse.jhu.edu/journals/american_journal_of_mathematics/v125/125.6christ.pdf},
}

@article {CKSTT03,
    AUTHOR = {Colliander, J. and Keel, M. and Staffilani, G. and Takaoka, H. and Tao, T.},
     TITLE = {Sharp global well-posedness for {K}d{V} and modified {K}d{V} on {$\mathbb R$} and {$\mathbb T$}},
   JOURNAL = {J. Amer. Math. Soc.},
  FJOURNAL = {Journal of the American Mathematical Society},
    VOLUME = {16},
      YEAR = {2003},
    NUMBER = {3},
     PAGES = {705--749},
      ISSN = {0894-0347},
   MRCLASS = {35Q53 (35B30 37K10)},
  MRNUMBER = {1969209},
MRREVIEWER = {Vladislav G. Dubrovsky},
       DOI = {10.1090/S0894-0347-03-00421-1},
       URL = {https://doi.org/10.1090/S0894-0347-03-00421-1},
}

@article {Fen94,
    AUTHOR = {Feng, X.},
     TITLE = {Global well-posedness of the initial value problem for the
              {H}irota-{S}atsuma system},
   JOURNAL = {Manuscripta Math.},
  FJOURNAL = {Manuscripta Mathematica},
    VOLUME = {84},
      YEAR = {1994},
    NUMBER = {3-4},
     PAGES = {361--378},
      ISSN = {0025-2611},
   MRCLASS = {35Q99 (35B45)},
  MRNUMBER = {1291126},
MRREVIEWER = {Meng Ru Li},
       DOI = {10.1007/BF02567462},
       URL = {https://doi.org/10.1007/BF02567462},
}

@article {GG84,
    AUTHOR = {Gear, J. A. and Grimshaw, R.},
     TITLE = {Weak and strong interactions between internal solitary waves},
   JOURNAL = {Stud. Appl. Math.},
  FJOURNAL = {Studies in Applied Mathematics},
    VOLUME = {70},
      YEAR = {1984},
    NUMBER = {3},
     PAGES = {235--258},
      ISSN = {0022-2526},
   MRCLASS = {76B25 (35Q20 76V05)},
  MRNUMBER = {742590},
MRREVIEWER = {A. A. Minzoni},
       DOI = {10.1002/sapm1984703235},
       URL = {https://doi.org/10.1002/sapm1984703235},
}

@article {Guo09,
    AUTHOR = {Guo, Z.},
     TITLE = {Global well-posedness of {K}orteweg-de {V}ries equation in
              {$H^{-3/4}(\mathbb R)$}},
   JOURNAL = {J. Math. Pures Appl. (9)},
  FJOURNAL = {Journal de Math\'{e}matiques Pures et Appliqu\'{e}es. Neuvi\`eme S\'{e}rie},
    VOLUME = {91},
      YEAR = {2009},
    NUMBER = {6},
     PAGES = {583--597},
      ISSN = {0021-7824},
   MRCLASS = {35Q53 (35B30)},
  MRNUMBER = {2531556},
MRREVIEWER = {John Albert},
       DOI = {10.1016/j.matpur.2009.01.012},
       URL = {https://doi.org/10.1016/j.matpur.2009.01.012},
}

@article {HS81,
    AUTHOR = {Hirota, R. and Satsuma, J.},
     TITLE = {Soliton solutions of a coupled {K}orteweg-de {V}ries equation},
   JOURNAL = {Phys. Lett. A},
  FJOURNAL = {Physics Letters. A},
    VOLUME = {85},
      YEAR = {1981},
    NUMBER = {8-9},
     PAGES = {407--408},
      ISSN = {0031-9163},
   MRCLASS = {35Q20 (76B25)},
  MRNUMBER = {632382},
       DOI = {10.1016/0375-9601(81)90423-0},
       URL = {https://doi.org/10.1016/0375-9601(81)90423-0},
}

@article {KT06,
    AUTHOR = {Kappeler, T. and Topalov, P.},
     TITLE = {Global wellposedness of {K}d{V} in {$H^{-1}(\mathbb T,\mathbb R)$}},
   JOURNAL = {Duke Math. J.},
  FJOURNAL = {Duke Mathematical Journal},
    VOLUME = {135},
      YEAR = {2006},
    NUMBER = {2},
     PAGES = {327--360},
      ISSN = {0012-7094},
   MRCLASS = {35Q53 (35B30)},
  MRNUMBER = {2267286},
MRREVIEWER = {Pierre A. Vuillermot},
       DOI = {10.1215/S0012-7094-06-13524-X},
       URL = {https://doi.org/10.1215/S0012-7094-06-13524-X},
}

@article {KPV93Duke,
    AUTHOR = {Kenig, C. E. and Ponce, G. and Vega, L.},
     TITLE = {The {C}auchy problem for the {K}orteweg-de {V}ries equation in
              {S}obolev spaces of negative indices},
   JOURNAL = {Duke Math. J.},
  FJOURNAL = {Duke Mathematical Journal},
    VOLUME = {71},
      YEAR = {1993},
    NUMBER = {1},
     PAGES = {1--21},
      ISSN = {0012-7094},
   MRCLASS = {35Q53},
  MRNUMBER = {1230283},
MRREVIEWER = {Guo Cheng Zhu},
       DOI = {10.1215/S0012-7094-93-07101-3},
       URL = {https://doi.org/10.1215/S0012-7094-93-07101-3},
}

@article {KPV96,
    AUTHOR = {Kenig, C. E. and Ponce, G. and Vega, L.},
     TITLE = {A bilinear estimate with applications to the {K}d{V} equation},
   JOURNAL = {J. Amer. Math. Soc.},
  FJOURNAL = {Journal of the American Mathematical Society},
    VOLUME = {9},
      YEAR = {1996},
    NUMBER = {2},
     PAGES = {573--603},
      ISSN = {0894-0347},
   MRCLASS = {35Q53 (35Bxx)},
  MRNUMBER = {1329387},
MRREVIEWER = {F. Pempinelli},
       DOI = {10.1090/S0894-0347-96-00200-7},
       URL = {https://doi.org/10.1090/S0894-0347-96-00200-7},
}

@article {Kis09,
    AUTHOR = {Kishimoto, N.},
     TITLE = {Well-posedness of the {C}auchy problem for the {K}orteweg-de
              {V}ries equation at the critical regularity},
   JOURNAL = {Differential Integral Equations},
  FJOURNAL = {Differential and Integral Equations. An International Journal
              for Theory \& Applications},
    VOLUME = {22},
      YEAR = {2009},
    NUMBER = {5-6},
     PAGES = {447--464},
      ISSN = {0893-4983},
   MRCLASS = {35Q53 (35B30)},
  MRNUMBER = {2501679},
MRREVIEWER = {Liana L. Fleming},
}

@article {KV19,
	AUTHOR = {Killip, R. and Vi\c{s}an, M.},
	TITLE = {Kd{V} is well-posed in {$H^{-1}$}},
	JOURNAL = {Ann. of Math. (2)},
	FJOURNAL = {Annals of Mathematics. Second Series},
	VOLUME = {190},
	YEAR = {2019},
	NUMBER = {1},
	PAGES = {249--305},
	ISSN = {0003-486X},
	MRCLASS = {35Q53 (35B30 37K10)},
	MRNUMBER = {3990604},
	MRREVIEWER = {John Albert},
	DOI = {10.4007/annals.2019.190.1.4},
	URL = {https://doi.org/10.4007/annals.2019.190.1.4},
}

@article {LP04,
    AUTHOR = {Linares, F. and Panthee, M.},
     TITLE = {On the {C}auchy problem for a coupled system of {K}d{V}  equations},
   JOURNAL = {Commun. Pure Appl. Anal.},
  FJOURNAL = {Communications on Pure and Applied Analysis},
    VOLUME = {3},
      YEAR = {2004},
    NUMBER = {3},
     PAGES = {417--431},
      ISSN = {1534-0392},
   MRCLASS = {35Q53 (35Q35)},
  MRNUMBER = {2098292},
MRREVIEWER = {Xian Guo Geng},
       DOI = {10.3934/cpaa.2004.3.417},
       URL = {https://doi.org/10.3934/cpaa.2004.3.417},
}

@article {MB03,
    AUTHOR = {Majda, A. J. and Biello, J. A.},
     TITLE = {The nonlinear interaction of barotropic and equatorial
              baroclinic {R}ossby waves},
   JOURNAL = {J. Atmospheric Sci.},
  FJOURNAL = {Journal of the Atmospheric Sciences},
    VOLUME = {60},
      YEAR = {2003},
    NUMBER = {15},
     PAGES = {1809--1821},
      ISSN = {0022-4928},
   MRCLASS = {86A10 (76B65)},
  MRNUMBER = {1994152},
MRREVIEWER = {Leo R. M. Maas},
       DOI = {10.1175/1520-0469(2003)060<1809:TNIOBA>2.0.CO;2},
       URL =
              {https://doi.org/10.1175/1520-0469(2003)060<1809:TNIOBA>2.0.CO;2},
}

@article {Mol11,
    AUTHOR = {Molinet, L.},
     TITLE = {A note on ill posedness for the {K}d{V} equation},
   JOURNAL = {Differential Integral Equations},
  FJOURNAL = {Differential and Integral Equations. An International Journal
              for Theory \& Applications},
    VOLUME = {24},
      YEAR = {2011},
    NUMBER = {7-8},
     PAGES = {759--765},
      ISSN = {0893-4983},
   MRCLASS = {35Q53 (35B30)},
  MRNUMBER = {2830706},
MRREVIEWER = {Luiz Gustavo Farah},
}

@article {Mol12,
    AUTHOR = {Molinet, L.},
     TITLE = {Sharp ill-posedness results for the {K}d{V} and m{K}d{V} equations on the torus},
   JOURNAL = {Adv. Math.},
  FJOURNAL = {Advances in Mathematics},
    VOLUME = {230},
      YEAR = {2012},
    NUMBER = {4-6},
     PAGES = {1895--1930},
      ISSN = {0001-8708},
   MRCLASS = {35Q53 (35A01 35B30 35B45 35D30 35R25)},
  MRNUMBER = {2927357},
MRREVIEWER = {Corentin Audiard},
       DOI = {10.1016/j.aim.2012.03.026},
       URL = {https://doi.org/10.1016/j.aim.2012.03.026},
}

@article {Oh09,
    AUTHOR = {Oh, T.},
     TITLE = {Diophantine conditions in well-posedness theory of coupled
              {K}d{V}-type systems: local theory},
   JOURNAL = {Int. Math. Res. Not.},
  FJOURNAL = {International Mathematics Research Notices},
      YEAR = {2009},
    NUMBER = {18},
     PAGES = {3516--3556},
      ISSN = {1073-7928},
   MRCLASS = {35Q53 (35B30)},
  MRNUMBER = {2535009},
MRREVIEWER = {Justin A. Holmer},
       DOI = {10.1093/imrn/rnp063},
       URL = {https://doi.org/10.1093/imrn/rnp063},
}

@article {Oh09-2,
    AUTHOR = {Oh, T.},
     TITLE = {Diophantine conditions in global well-posedness for coupled
              {K}d{V}-type systems},
   JOURNAL = {Electron. J. Differential Equations},
  FJOURNAL = {Electronic Journal of Differential Equations},
      YEAR = {2009},
     PAGES = {No. 52, 48},
      ISSN = {1072-6691},
   MRCLASS = {35Q53 (35B10 35B30)},
  MRNUMBER = {2505110},
MRREVIEWER = {Ming Mei},
}

@article {ST00,
    AUTHOR = {Saut, J.-C. and Tzvetkov, N.},
     TITLE = {On a model system for the oblique interaction of internal
              gravity waves},
      NOTE = {Special issue for R. Temam's 60th birthday},
   JOURNAL = {ESAIM Math. Model. Numer. Anal.},
  FJOURNAL = {ESAIM Mathematical Modelling and Numerical Analysis},
    VOLUME = {34},
      YEAR = {2000},
    NUMBER = {2},
     PAGES = {501--523},
      ISSN = {0764-583X},
   MRCLASS = {35Q53 (35A22 35B20)},
  MRNUMBER = {1765672},
MRREVIEWER = {Xian Guo Geng},
       DOI = {10.1051/m2an:2000153},
       URL = {https://doi.org/10.1051/m2an:2000153},
}

@article {Tao01,
    AUTHOR = {Tao, T.},
     TITLE = {Multilinear weighted convolution of {$L^2$}-functions, and
              applications to nonlinear dispersive equations},
   JOURNAL = {Amer. J. Math.},
  FJOURNAL = {American Journal of Mathematics},
    VOLUME = {123},
      YEAR = {2001},
    NUMBER = {5},
     PAGES = {839--908},
      ISSN = {0002-9327},
   MRCLASS = {35Q53 (35L05 35Q55)},
  MRNUMBER = {1854113},
MRREVIEWER = {Thierry Cazenave},
       URL =
              {http://muse.jhu.edu/journals/american_journal_of_mathematics/v123/123.5tao.pdf},
}

@article {YZ22a,
	AUTHOR = {Yang, X. and Zhang, B.-Y.},
	TITLE = {Local well-posedness of the coupled {K}d{V}-{K}d{V} systems on
	{$\mathbb{R}$}},
	JOURNAL = {Evol. Equ. Control Theory},
	FJOURNAL = {Evolution Equations and Control Theory},
	VOLUME = {11},
	YEAR = {2022},
	NUMBER = {5},
	PAGES = {1829--1871},
	ISSN = {2163-2472},
	MRCLASS = {35Q53},
	MRNUMBER = {4475878},
	DOI = {10.3934/eect.2022002},
	URL = {https://doi.org/10.3934/eect.2022002},
}

@article {YZ22b,
	AUTHOR = {Yang, X. and Zhang, B.-Y.},
	TITLE = {Well-posedness and critical index set of the {C}auchy problem
	for the coupled {K}d{V}-{K}d{V} systems on {$\mathbb{T}$}},
	JOURNAL = {Discrete Contin. Dyn. Syst.},
	FJOURNAL = {Discrete and Continuous Dynamical Systems. Series A},
	VOLUME = {42},
	YEAR = {2022},
	NUMBER = {11},
	PAGES = {5167--5199},
	ISSN = {1078-0947},
	MRCLASS = {35Q53 (11J04 11J72 35G55 35L56)},
	MRNUMBER = {4483989},
	DOI = {10.3934/dcds.2022090},
	URL = {https://doi.org/10.3934/dcds.2022090},
}

@article {YLZ24,
	AUTHOR = {Yang, X. and Li, S. and Zhang, B.-Y.},
	TITLE = {Effect of lower order terms on the well-posedness of
	{M}ajda-{B}iello systems},
	JOURNAL = {J. Evol. Equ.},
	FJOURNAL = {Journal of Evolution Equations},
	VOLUME = {24},
	YEAR = {2024},
	NUMBER = {4},
	PAGES = {Paper No. 95, 32},
	ISSN = {1424-3199},
	MRCLASS = {35Q53 (35D30 35G55 35L56)},
	MRNUMBER = {4817717},
	DOI = {10.1007/s00028-024-01022-0},
	URL = {https://doi.org/10.1007/s00028-024-01022-0},
}
}

\bigskip

\thanks{(X. Yang) School of Mathematics, Southeast University, Nanjing, Jiangsu 211189, China} 

\thanks{Email: xinyang@seu.edu.cn}

\end{document}